\newcommand{\GI}[2][]{\sidenote[colback=yellow!20]{\textbf{GI\xspace #1:} #2}}
\newcommand{\EX}{\E}
\newcommand{\SL}{\mathit{SL}}
\newcommand{\Prob}{\P}
\newcommand{\mix}{\mathrm{mix}}
\newcommand{\dis}{\mathrm{dis}}
\newcommand{\tmix}{t_{\mix}}
\newcommand{\tdis}{t_{\dis}}
\newcommand{\ndis}{N_{\dis}}
\newcommand{\nmix}{N_{\mix}}
\newcommand{\comp}{c}
\DeclarePairedDelimiter{\ip}{\langle}{\rangle}
\newcommand{\Pro}{\mathbb{P}^1}
\newcommand{\cL}{\mathcal{L}}
\newcommand{\cH}{\mathcal{H}}
\newcommand{\osc}{\mathrm{osc}}
\colorlet{darkgreen}{green!40!black} 
\begin{document}
	\title[Speeding up Langevin Dynamics by Mixing]{Speeding up Langevin Dynamics by Mixing}

	\author[Christie]{Alexander Christie}
	\address{%
		Department of Mathematics,
		Penn State University,
		State College, PA 16803.}
	\author[Feng]{Yuanyuan Feng}
	\email{acc5843@psu.edu}
	\address{School of  Mathematical Sciences, Shanghai Key Laboratory of PMMP, East China Normal University, Shanghai, 200241, P.R. China.
	}
	\email{yyfeng@math.ecnu.edu.cn}
	\author[Iyer]{Gautam Iyer}
	\address{%
		Department of Mathematical Sciences,
		Carnegie Mellon University,
		Pittsburgh, PA 15213.}
	\email{gautam@math.cmu.edu}
	\author[Novikov]{Alexei Novikov}
	\address{%
		Department of Mathematics,
		Penn State University,
		State College, PA 16803.}
	\email{novikov@psu.edu}
	\begin{abstract}
		We study an overdamped Langevin equation on the $d$-dimensional torus with stationary distribution proportional to~$p = e^{-U / \kappa}$.
		When~$U$ has multiple wells the mixing time of the associated process is exponentially large (of size~$e^{O(1/\kappa)}$).
		We add a drift to the Langevin dynamics (without changing the stationary distribution) and obtain quantitative estimates on the mixing time.
		We show that an exponentially mixing drift can be rescaled to make the mixing time of the Langevin system arbitrarily small.
		For numerical purposes, it is useful to keep the size of the imposed drift small, and we show that the smallest allowable rescaling ensures that the mixing time is $O( d/\kappa^2)$, which is an order of magnitude smaller than~$e^{O(1/\kappa)}$.

		We provide one construction of an exponentially mixing drift, although with rate constants whose~$\kappa$-dependence is unknown.
		Heuristics (from discrete time) suggest that $\kappa$-dependence of the mixing rate is such that the imposed drift is of size~$O(d / \kappa^3)$.
		The large amplitude of the imposed drift increases the numerical complexity, and thus we expect this method will be most useful in the initial phase of Monte Carlo methods to rapidly explore the state space.
	\end{abstract}
	\thanks{This work has been partially supported by the National Science Foundation under grants
		DMS-1813943,
	  DMS-2108080,
		the Science and Technology Commission of Shanghai Municipality (No. 22DZ2229014) under grant 23YF1410300,
	  and the Center for Nonlinear Analysis.
  }
	\subjclass{%
	  Primary:
			65C05. 
	  Secondary:
	    37A25, 
			60H10, 
			60H30, 
	    76R99. 
	  }
	\keywords{mixing, Langevin Monte Carlo}
	\maketitle

\section{Introduction}\label{s:intro}

Sampling from a given target distribution is a problem that arises in many modern applications,
such as molecular dynamics~\cite{TurqLantelmeEA77, QuigleyProbert04, LiPaquetEA15, MazzolaSorella17, ArnonRabaniEA20}, machine learning~\cite{AndrieuFreitasEA03}, field theory~\cite{BatrouniKatzEA85}, Bayesian Statistics and computational physics~\cite{GelmanCarlinEA13}.
A typical situation of interest is to draw samples from a probability distribution with density proportional to
\begin{equation}\label{e:pdef}
	p = e^{-U / \kappa} \,.
\end{equation}
Here~$U$ is a potential function that is usually regular and explicit, and~$\kappa > 0$ is a small parameter.

Even though~$U$ is known explicitly, sampling from the above distribution is a numerically challenging problem with a long history~\cite{MetropolisRosenbluthEA53, Hastings70, Neal96, RaginskyRakhlinEA17, Cheng2020, BlancaCaputoEA21, ChewiGerberEA22}.
To briefly explain the difficulties involved, note first that in order to convert~$p$ into a probability density function, we need to normalize it by setting
\begin{equation}\label{e:pdfintro}
	\rho_\infty
		=\frac{p}{Z}
		= \frac{e^{-U/\kappa}}{Z} \,,
	\quad\text{where}\quad
	Z = Z(\kappa) = \int_{\R^d} p \, dx\,.
\end{equation}
Unfortunately, the constant~$Z$ is not easy to compute explicitly as numerical integration via quadrature is too expensive in high dimensions (see~\cite{Novak16} and references therein).

Moreover, even if the normalization constant~$Z$ is known, the majority of the mass of~$\rho_\infty$ is typically 
concentrated in a region with volume~$O(\kappa^{d/2})$, where~$p$ is relatively larger.
In order to effectively sample from~$\rho_\infty$, we need to identify this region.
There is no obvious way to do this without evaluating~$p$ everywhere, a task that is computationally infeasible in high dimensions.

There are many numerical algorithms designed to address these issues.
The first such algorithm was the celebrated Metropolis--Hastings algorithm~\cite{MetropolisRosenbluthEA53, Hastings70, LevinPeres17} which was designed to sample from the density~$\rho_\infty$ without knowledge of the normalization constant~$Z$.
Subsequently, numerous methods were developed to improve the convergence rate and address deficiencies in the Metropolis--Hastings algorithm.
Some popular methods include Hamiltonian Monte Carlo (HMC), Langevin Monte Carlo, Metropolis Adjusted Langevin algorithm (MALA), 
and various stochastic gradient methods~\cite{AndrieuFreitasEA03, Diaconis09, Betancourt17, LevinPeres17, GlattHoltzKrometisEA21,  GaoGurbuzbalabanEA22}.

Of these, one that will be of particular interest to us is the Langevin Monte Carlo method.
This method hinges on the fact that~$\rho_\infty$ is the stationary distribution of an over-damped Langevin equation, and so one can sample from~$\rho_\infty$ by performing Monte Carlo simulations.
To elaborate, consider the over-damped Langevin equation
\begin{equation}\label{e:langevin}
	dX_t = -\grad U(X_t) \, dt + \sqrt{2\kappa} \, dW_t\,,
\end{equation}
where~$W$ is a standard~$d$ dimensional Brownian motion.
It is easy to see that the density $\rho_\infty$ is stationary for the process~$X$.
If~$X$ is mixing, then the density of~$X_t$ for large enough~$t$ will be close to~$\rho_\infty$, and so performing Monte Carlo simulations on~\eqref{e:langevin} will allow us to sample from~$\rho_\infty$.

It is well known that if~$U$ is strongly convex, then the process~$X$ is exponentially mixing~\cite{BakryEmery85}.
More generally, if the stationary distribution satisfies a Poincar\'e inequality or log-Sobolev inequality 
(see for instance~\cite[A.19]{Villani09} and~\cite{VempalaWibisono19}), 
then the process~$X$ is exponentially mixing, and one can sample from~$\rho_\infty$ by simulating~\eqref{e:langevin}.
This leads to many sampling results such as~\cite{DalalyanTsybakov12, Dalalyan17, Dalalyan17a, DurmusMoulines17, Chewi23}, with guaranteed bounds on the convergence rate.

If~$U$ is not convex, however, the convergence rate could be extremely slow.
Indeed, it is well known that near non-degenerate local minima of~$U$, the process~$X$ can get trapped for time~$e^{O(1/\kappa)}$, which is extremely long.
This phenomenon is known as \emph{metastability}, and has been extensively studied (see for instance~\cite{
	SchussMatkowsky79,
	Schuss80,
	BovierEckhoffEA04,
	BovierGayrardEA05,
	FreidlinWentzell12,
	MenzSchlichting14
}).
As a result, one has to wait an extremely long amount of time before~\eqref{e:langevin} generates good samples.

The main contribution of this paper is to show that metastable points can be completely avoided by adding a ``sufficiently mixing'' drift to~\eqref{e:langevin}.
We will show (Theorem~\ref{t:FastConvIntro}, below),
that this will guarantee that the distribution of~$X_t$ is~$L^1$ close to the stationary distribution in time which a polynomial in~$1/\kappa$.
The added drift, however, does come with an increased computational cost.
In order for our method to work, the drift must be ``sufficiently mixing'' which requires it to be large.

To focus on the issue of metastability, we will ignore issues at infinity by restricting our attention to the compact torus~$\T^d$.
We expect our results can be generalized to the situation where~$U$ is strongly convex outside  a compact region, and the added drift vanishes outside this region.
If the potential~$U$ has multiple wells, metastable points will force the \emph{mixing time}%
\footnote{
	Recall the mixing time (defined in~\eqref{e:tmixdef} below), measures the rate at which the distribution of~$X$ converges to the stationary distribution~\cite{MontenegroTetali06,LevinPeres17} in total variation.
}
of~$X$ to be~$e^{O(1/\kappa)}$.
For~$\kappa \ll 1$ this is too large to be practical.

Our main result (Theorem~\ref{t:FastConvIntro}, below) reduces the mixing time to a polynomial in~$1/\kappa$ by adding a sufficiently mixing drift.
Explicitly, the modification of~\eqref{e:langevin} we consider is
\begin{equation}\label{e:Aeq}
		dX_t = A v_{At}(X_t) \, dt -\grad U(X_t) \, dt + \sqrt{2 \kappa} \, dW_t\,,
		\quad\text{on the torus } \T^d  \,.
\end{equation}
Here~$A \gg 1$ is a large parameter, and~$v$ is a time dependent uniformly Lipschitz flow such that
\begin{equation}\label{e:measurePreserving}
	\kappa \dv v - \grad U \cdot v = 0\,.
\end{equation}
Note equation~\eqref{e:measurePreserving} is equivalent to the condition that~$\dv (\rho_\infty  v) = 0$, which implies 
that the stationary distribution of~\eqref{e:Aeq} is still~$\rho_\infty$. 

The over damped Langevin system~\eqref{e:Aeq} with a drift satisfying~\eqref{e:measurePreserving} has been studied before by several authors.
In certain ways this system always converges faster to equilibrium faster than~\eqref{e:langevin} (see for example~\cite{HwangHwangMaEA93, ReyBelletSpiliopoulos15, DuncanPavliotisEA17,  HuWangEA20}).
Prior to our work, the increased convergence rate was obtained by taking $v(x) = J \nabla U(x)$ 
for an antisymmetric matrix~$J$~\cite{LelievreNierEA13, DuncanLelievreEA16, GuillinMonmarche16, LuSpiliopoulos18}.
With this approach, however, the mixing time is still~$e^{O(1/\kappa)}$, but with smaller constants than the mixing time of~\eqref{e:langevin}.
Using a different approach, Damak \etal~\cite{DamakFrankeEA20} produce a sequence of time independent flows in~$\R^2$ which make the mixing time arbitrarily small.
Their construction relies on a strong oscillation of stream lines of the imposed drift, and only applies to the two dimensional case with a quadratic potential.
\smallskip

The first result in this paper provides a quantitative estimate of the mixing time of~$X$, denoted by~$\tmix = \tmix(\kappa)$, when the deterministic flow of~$v$ is \emph{exponentially mixing}.%
\footnote{
	Here the (deterministic) flow of~$v$ is assumed to be exponentially mixing, in the sense of dynamical systems.
	We recall this notion in Section~\ref{s:mixing}, below.
}
Our result estimates mixing time in terms of the \emph{dissipation time}, denoted by~$\tdis$, which measures the rate at which $X_t$ converges to the stationary distribution in the~$L^2$ norm (the precise definition is in Section~\ref{s:dtime}, below).

\begin{theorem}\label{t:FastConvIntro}
	Suppose the vector field~$v$ satisfies~\eqref{e:measurePreserving} and generates an exponentially mixing flow.
	Denote the mixing rate by
	\begin{equation}\label{e:expMixRate}
		h(t)=D e^{-\gamma t}\,,
	\end{equation}
	where~$D$ and~$\gamma$ are constants that may depend on~$\kappa$.
	There exists constants $C$ and $A_0 = A_0(\kappa) < \infty$ such that
	\begin{gather}
		\label{e:TDisIntro}
		\tdis
			\leq 
				\frac{C \norm{\grad v}_{L^\infty}}{\gamma^2 A}
				\left(1+\ln^2\frac{D\gamma^2A}{\kappa\norm{\nabla v}_{L^\infty}}\right)\,,
		\\
		\label{e:TMixIntro}
		\tmix
			\leq C d\paren[\Big]{ 1+\frac{\norm{U}_{\osc}}{\kappa}-\ln(\kappa\tdis)}\tdis\,,
	\end{gather}
	for all sufficiently small~$\kappa > 0$.
	Here $\norm{U}_\osc = \max U - \min U$,
	\begin{equation*}
		\norm{\grad v}_{L^\infty} =
			\norm[\Big]{  \sum_{i,j} \abs{\partial_i v_j}^2 }_{L^\infty}^{1/2}\,.
	\end{equation*}
\end{theorem}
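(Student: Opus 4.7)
My plan is to prove the two bounds~\eqref{e:TDisIntro} and~\eqref{e:TMixIntro} by separate arguments: an enhanced-dissipation estimate for the first, and a standard smoothing-plus-iteration scheme converting $L^2$ dissipation into total variation mixing for the second.

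For~\eqref{e:TDisIntro} I would work with $f = \rho/\rho_\infty$ in the weighted space $L^2(\rho_\infty)$. Condition~\eqref{e:measurePreserving} ensures both drifts $-\grad U$ and $Av_{At}$ preserve the measure $\rho_\infty\,dx$, so only diffusion contributes to the energy identity
\[
\frac{d}{dt}\norm{f - \bar f}_{L^2(\rho_\infty)}^2 = -2\kappa\norm{\grad f}_{L^2(\rho_\infty)}^2.
\]
To convert the exponential mixing of the $v$-flow into enhanced dissipation, I would run a Constantin-Kiselev-Ryzhik-Zlato\v{s} style dichotomy on a short window $[0,\tau]$: either $\norm{\grad f}_{L^2(\rho_\infty)}$ stays large enough throughout for the energy identity alone to produce substantial $L^2$ decay, or it stays small, in which case $f(\tau)$ is close in $H^{-1}$ to the pure-transport image $f_0 \circ \Phi_{A\tau}^{-1}$, to which the mixing hypothesis $h(t) = De^{-\gamma t}$ directly applies. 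The interpolation $\norm{f}_{L^2}^2 \le \norm{f}_{H^{-1}}\norm{f}_{H^1}$, combined with a parabolic-regularity bound $\norm{f}_{H^1} \lesssim \norm{\grad v}_{L^\infty}\norm{f}_{L^2}$ available in the small-gradient branch, then forces $\norm{f(\tau)}_{L^2}^2 \le \tfrac12\norm{f(0)}_{L^2}^2$ provided $\tau$ is chosen of order $\frac{1}{\gamma A}\ln\bigl(D\gamma^2 A/(\kappa\norm{\grad v}_{L^\infty})\bigr)$. Iterating this one-step halving and tracking the $\norm{\grad v}_{L^\infty}/\gamma$ factor entering each $H^1$ step yields the $\log^2$ power and the $\norm{\grad v}_{L^\infty}/\gamma^2$ prefactor in~\eqref{e:TDisIntro}.

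With~\eqref{e:TDisIntro} in hand, I would obtain~\eqref{e:TMixIntro} by the standard three-step $L^2\to L^1$ conversion. First, for an arbitrary (possibly singular) initial distribution, smooth the density by running the full semigroup for a time $t_0$: a Nash-type ultracontractivity inequality on $\T^d$ yields $\norm{P_{t_0} f}_{L^2(\rho_\infty)}^2 \lesssim e^{\norm{U}_\osc/\kappa}(t_0\kappa)^{-d/2}\norm{f}_{L^1(\rho_\infty)}^2$, where the $e^{\norm{U}_\osc/\kappa}$ prefactor reflects the ratio of $\rho_\infty$-weighted to flat $L^p$ norms. Second, iterate the $L^2$ halving from Step 1 for $n$ further periods of length $\tdis$, producing a $2^{-n}$ factor. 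Third, Cauchy-Schwarz converts the final $L^2$ bound into an $L^1$ (equivalently, total-variation) bound. Choosing $t_0 = \tdis$ and $n$ of order $d\abs{\ln(\kappa\tdis)} + \norm{U}_\osc/\kappa$ drives the TV distance below any fixed target, and the total time $(n+1)\tdis$ matches the form stated in~\eqref{e:TMixIntro}.

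The main obstacle is the enhanced-dissipation argument of Step 1. The pure-transport comparison $\norm{f(\tau) - f_0 \circ \Phi_{A\tau}^{-1}}_{H^{-1}} \lesssim \kappa\tau\norm{f_0}_{H^1}$ forces a delicate trade-off: $\tau$ must be large enough for mixing to beat the diffusive error, and the $H^1$-regularity estimate (which loses a factor of $\norm{\grad v}_{L^\infty}$ per iteration) must be sub-iterated enough times to yield the quadratic logarithm in~\eqref{e:TDisIntro}. The ultracontractivity inequality needed in Step 2, with its explicit $e^{\norm{U}_\osc/\kappa}$ and $\kappa^{-d/2}$ dependences, is more routine but still requires careful tracking of the constants through the Nash-type estimate on $\T^d$.
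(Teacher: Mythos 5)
Your overall two-step plan --- a Constantin--Kiselev--Ryzhik--Zlato\v{s} style dichotomy for the dissipation time followed by a Nash-type ultracontractivity estimate and iteration for the mixing time --- matches the paper's decomposition into Theorem~\ref{t:fconv} and Proposition~\ref{p:tdisTmix}, and the mixing-time half of your sketch is essentially the paper's argument.

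The dissipation-time half, however, has a genuine gap. Your chain $\norm{f(\tau)}_{L^2}^2\le\norm{f(\tau)}_{H^{-1}}\norm{f(\tau)}_{H^1}$ followed by $\norm{f(\tau)}_{H^1}\lesssim\norm{\grad v}_{L^\infty}\norm{f(\tau)}_{L^2}$ needs an $H^1$ bound \emph{at time $\tau$}, but the small-gradient hypothesis only constrains $\norm{\grad f(0)}_{L^2}$; the transport term stretches gradients on $[0,\tau]$ and there is no parabolic-regularity bound of the claimed form. The paper sidesteps precisely this issue: it never bounds $\norm{\phi_s}_{\dot H^1}$, but instead pairs $\theta_0\circ\Phi^{-1}$ against the spectral projection $P_N\phi_s$ and uses the eigenfunction bound $\norm{P_N\phi}_{\dot H^1(\mu)}\le\sqrt{\lambda_N/\kappa}\,\norm{\phi}_{L^2(\mu)}$, which requires the discrete spectrum of $\mathcal L_\kappa$ and Weyl's law (Lemma~\ref{l:spectral}), not bare interpolation. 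Your proposed dichotomy threshold $\norm{\grad v}_{L^\infty}$ is also the wrong scale; the correct cutoff is $\sqrt{\lambda_N/\kappa}$ with $\lambda_N\approx H(A)$ defined implicitly by~\eqref{e:H}, which for exponential $h$ comes out to $\sqrt{\gamma^2 A/(\kappa\norm{\grad v}_{L^\infty})}$ up to logarithms. Finally, the $\ln^2$ in~\eqref{e:TDisIntro} does not come from iterating one-step halving and ``tracking a factor''; it is produced in a single algebraic step by solving the transcendental balance $T=a e^{-b\sqrt T}$ (cf.~\eqref{e:T}--\eqref{e:TBound}) embedded in the implicit definition of $H(A)$. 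Without identifying the correct threshold and the associated fixed-point problem, the sketch cannot reproduce the $\frac{\norm{\grad v}_{L^\infty}}{\gamma^2 A}\ln^2(\cdots)$ form.
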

\begin{remark}\label{r:A0choiceExp}
	We clarify that the constant~$C$ is independent of both the dimension and~$\kappa$.
	Clearly both~$\tdis$ and~$\tmix$ vanish as~$A \to \infty$.
	However, when~$A$ is large, solving~\eqref{e:Aeq} is computationally expensive, and thus we would like to choose~$A$ to be as small as possible.
	From the proof (see Remark~\ref{r:A0choice}, below) we will show that~$A_0$ can be chosen according to
	\begin{equation}\label{e:A0choiceExp}
		A_0 = \frac{C' \kappa \norm{\grad v}_{L^\infty} }{\gamma^2} \ln^2 \frac{ C' D}{\kappa}  \,,
	\end{equation}
	for some constant~$C'$ which is independent of~$\kappa$.
	Thus if we choose~$A = A_0$ in Theorem~\ref{t:FastConvIntro}, then the bounds~\eqref{e:TDisIntro} and~\eqref{e:TMixIntro} reduce to the polynomial bounds
	\begin{equation}\label{e:PolyMix}
		\tdis \leq \frac{C''}{\kappa}
		\quad\text{and}\quad
		\tmix \leq \frac{C'' d}{\kappa^2}\,,
	\end{equation}
	for some constant~$C''$ that is independent of~$\kappa$ and~$d$.
	This is an order of magnitude smaller than the mixing time of~\eqref{e:langevin} which is~$e^{O(1/\kappa)}$ for multi-modal distributions.
\end{remark}

We were unable to use the ``usual techniques'' (e.g.\ coupling, Cheeger bounds, etc.~\cite{LevinPeres17,MontenegroTetali06}) to obtain the mixing time bound~\eqref{e:TMixIntro}.
The proof of Theorem~\ref{t:FastConvIntro} instead uses a PDE based Fourier splitting method to obtain the dissipation time bound~\eqref{e:TDisIntro} (see Theorem~\ref{t:fconv}, below), and then estimates the mixing time in terms of the dissipation time (Proposition~\ref{p:tdisTmix}, below).
Postponing further discussion of these ideas to Section~\ref{s:FastConvProof}, we now explicitly construct 
velocity fields that are exponentially mixing so that Theorem~\ref{t:FastConvIntro} may be applied.

Notice first that a large family of velocity fields satisfying~\eqref{e:measurePreserving} can be easily constructed by taking skew gradients.
Indeed, if $\psi$ is a periodic stream function, and $i, j \in \set{1, \dots, d}$  with $i \neq j$, then any velocity field~$v$ defined by
\begin{equation}\label{e:vDef}
	v
	\defeq \frac{1}{p} \grad^\perp_{i,j} (p \psi)
	= \grad^\perp_{i,j} \psi - \frac{\psi \grad^\perp_{i,j} U}{\kappa}\,,
\end{equation}
satisfies the measure preserving condition~\eqref{e:measurePreserving}.
Here $\grad^\perp_{i,j}$ is the skew gradient in the $x_i$-$x_j$ plane, and is defined by
\begin{equation}\label{e:GradPerpij}
	\grad^\perp_{i,j} \psi = -\partial_j \psi \bm e_i + \partial_i \psi \bm e_j\,,
\end{equation}
where $\bm e_i, \bm e_j$ are the standard $i^\text{th}$ and $j^\text{th}$ basis vectors respectively.
If~$\psi$ is a function of only one coordinate, say $\psi(x) = F(x_i)$, and~$p$ is identically constant, then the velocity field~$v$ above is simply a shear flow with with magnitude $F'(x_i)$ directed along the $j^\text{th}$ coordinate axis.
If~$\psi(x) = F(x_i)$ as above, but~$p$ is not identically constant, then we will call~$v$ a ``modified shear''.
In this case we note that the velocity field lies in the $x_i$-$x_j$ plane, but may not necessarily be directed along~$\bm e_j$.
Moreover, the magnitude now depends on all coordinates, and not just $x_j$.

\begin{figure}[htb]
	\centering
	\begin{minipage}{.4\textwidth}
		\centering
		\includegraphics[width=1\linewidth]{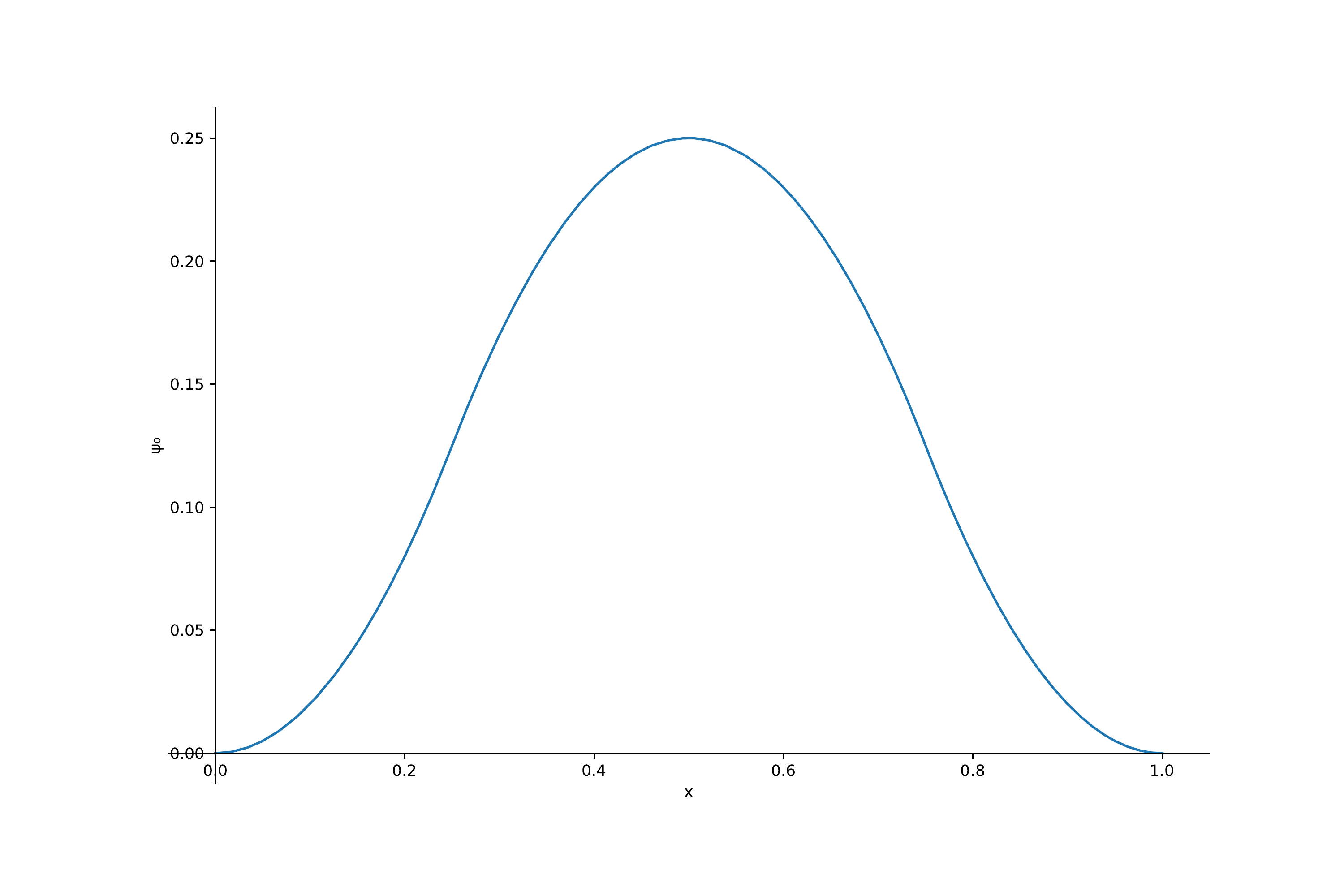}
	\end{minipage}%
	\quad
	\begin{minipage}{.4\textwidth}
		\centering
		\includegraphics[width=1\linewidth]{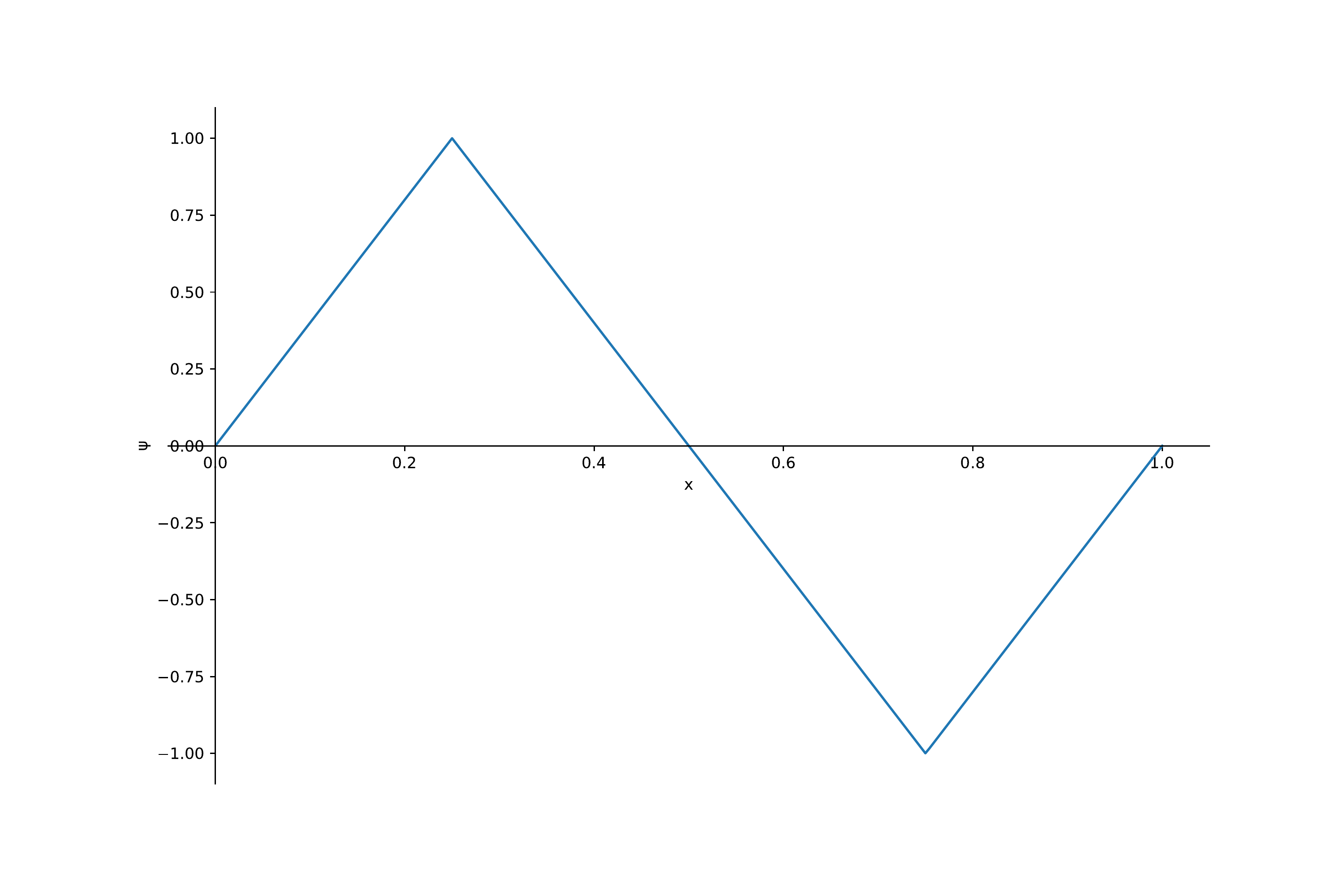}
	\end{minipage}%
	\caption{The function~$F$ (left) and its sawtooth shaped derivative~$F'$ (right).}
	\label{f:sawTooth}
\end{figure}
We will now construct exponentially mixing velocity fields using randomly shifted alternating modified shears, in the spirit of Pierrehumbert~\cite{Pierrehumbert94} who used a similar construction to study mixing in fluid dynamics.
Explicitly, choose $\alpha_n \in [0, 1]$, $\beta_n \in [0, 1]$ and $i_n, j_n \in \set{1, \dots, d}$ to be uniformly distributed, i.i.d.\ random variables such that $i_n \neq j_n$.
Given a periodic function~$F \colon \R \to \R$ define
\begin{equation}\label{e:vDefAltShear}
	v_t(x)
		\defeq \frac{\beta_n}{p(x)} \grad^\perp_{i_n, j_n} \paren[\big]{ p(x) F( x_{i_n} - \alpha_n ) }
	\,,
	\quad\text{when } t \in [n, n+1)\,.
\end{equation}
We will either choose~$F(x) = \sin(2\pi x)$ (as in~\cite{Pierrehumbert94}), or choose $F$ so that the derivative is a sawtooth shaped function 
(see Figure~\ref{f:sawTooth}, or the exact formula~\eqref{e:psi0} in Section~\ref{s:modifiedTentConstruction}, below).
We claim that that~$v$ is exponentially mixing with probability~$1$.

\begin{theorem}\label{t:SinMix}
	Suppose the potential~$U$ is $C^2$, and~$F$ is the function with sawtooth derivative shown in Figure~\ref{f:sawTooth}.
	If~$d \geq 3$, suppose further there exists a small ball $B(\hat x, \hat \epsilon) \subseteq \T^d$ such that
	\begin{equation}\label{e:highddegeneracycond}
		U(x)=\sum_{i=1}^d U_i(x_i)
		\quad\text{ for all } x \in B(\hat x, \hat \epsilon)\,.
	\end{equation}
	Then there exists a constant $\gamma = \gamma(\kappa , d) < \infty$, and a finite random variable~$D = D(\kappa, d) $ such that almost surely the velocity field~\eqref{e:vDefAltShear} is exponentially mixing with rate~\eqref{e:expMixRate}.

	If instead $F(x) = \sin( 2\pi x )$, then the same conclusions hold provided we also assume the critical points of~$U$ are isolated.
\end{theorem}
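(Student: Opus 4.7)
The plan is to regard the time-$1$ flow map $\phi_n$ of~\eqref{e:vDefAltShear} as an i.i.d.\ sequence of random diffeomorphisms of $\T^d$, each preserving the measure $\rho_\infty\, dx$ thanks to~\eqref{e:measurePreserving}, and then to invoke a random-dynamical-systems framework (in the spirit of Blumenthal--Coti Zelati--Gvalani and Bedrossian--Blumenthal--Punshon-Smith) which deduces quenched exponential decay of correlations from two ingredients: (i) positivity of the top Lyapunov exponent of the derivative cocycle $D\phi_n$, and (ii) a H\"ormander-type non-degeneracy of the induced projective Markov chain on $\T^d\times \mathbb{S}^{d-1}$. Together these yield a deterministic rate $\gamma=\gamma(\kappa,d)>0$ and an almost-surely finite random prefactor $D(\omega,\kappa,d)$ such that~\eqref{e:expMixRate} holds pathwise, which is exactly the conclusion.

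For ingredient~(i), each $D\phi_n$ is a product of random shear matrices in the $(i_n,j_n)$-plane, with shear strength controlled by $\beta_n F'(x_{i_n}-\alpha_n)$ plus a lower-order contribution from $p^{-1}\nabla p$. Because $(\alpha_n,\beta_n,i_n,j_n)$ are i.i.d.\ and the pair $(i_n,j_n)$ samples every coordinate plane with positive probability, the semigroup generated in $SL(d,\R)$ is irreducible and non-compact, and a Furstenberg-type argument rules out a vanishing top Lyapunov exponent.

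Ingredient~(ii) is the delicate part, and is precisely where the coordinate-splitting hypothesis~\eqref{e:highddegeneracycond} enters. Inside the ball $B(\hat x,\hat\epsilon)$ the density $p$ factors as $\prod_k p_k(x_k)$, so a direct computation shows that the modified shear in the $(i,j)$-plane depends only on $(x_i,x_j)$; this reduces the local accessibility problem to the classical 2D alternating-shear setting of Pierrehumbert. For that 2D system, the H\"ormander / accessibility condition on the projective chain is by now well established when $F'$ is piecewise constant with a jump (the sawtooth case), and when $F(x)=\sin(2\pi x)$ the hypothesis that the critical points of $U$ are isolated ensures that the shear strength does not vanish on a set that would obstruct accessibility. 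The accessibility property is then propagated from $B(\hat x,\hat\epsilon)$ to all of $\T^d\times\mathbb{S}^{d-1}$ via topological transitivity of the random flow on $\T^d$, i.e.\ by steering between orbits using finite compositions of the random shears.

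The main obstacle is precisely this H\"ormander verification in dimensions $d\geq 3$: because a single map $\phi_n$ activates only one $2$-plane, reaching an arbitrary direction in $\mathbb{S}^{d-1}$ from a given one requires carefully chaining shears in distinct planes, and~\eqref{e:highddegeneracycond} is what decouples the transverse coordinates inside the ball so that this chaining can be carried out explicitly. Once positive Lyapunov exponent and a spectral gap for the projective chain are in hand, the passage to quenched exponential mixing with a random prefactor is a direct application of the general random-dynamical-systems theorem, and the distinction between the sawtooth and sine cases in the theorem statement reflects only the additional regularity one needs on $U$ in order to close the accessibility step.
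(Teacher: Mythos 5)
Your high-level plan (cast the time-one maps as an i.i.d.\ random dynamical system, verify positivity of the top Lyapunov exponent and a non-degeneracy of the projective chain, then invoke the Blumenthal--Coti Zelati--Gvalani / Bedrossian--Blumenthal--Punshon-Smith machinery) is the same as the paper's, including the idea of using~\eqref{e:highddegeneracycond} to localize the accessibility argument to a ball where the problem decouples into 2D blocks. However, there is a genuine gap at the heart of your ingredient~(ii).

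You assert that the verification is a ``H\"ormander-type non-degeneracy'' and that it ``is by now well established'' for alternating-shear flows. The paper says the opposite: H\"ormander's condition, which is the device used in the cited RDS works, is not available here because the imposed drift is deterministic (the randomness enters only through the i.i.d.\ parameters $(\alpha_n,\beta_n,i_n,j_n)$, not through Brownian forcing of the flow). The paper replaces it with the Rashevsky--Chow controllability theorem for the family of vector fields $\{v_{\alpha,i}\}$, and the accessibility then reduces to checking that the Lie algebra generated at each point by the fields $(v_{\alpha,i}(x),v_{\alpha,i}(y))$ (for the two-point process) and by the lifted fields $\tilde v_{\alpha,i}$ (for the rescaled derivative process) has full dimension. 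This is not a known result being cited; it is the technical core of the proof, requiring case-by-case symbolic computations of Lie brackets, different arguments for the sawtooth and sine profiles, and a nondegeneracy analysis (e.g.\ the five conditions in~\eqref{e:derivirred} and the argument that they cannot all fail simultaneously). Your proposal waves this away, so the irreducibility claim on which everything else rests is unsupported.

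Two smaller but still substantive discrepancies: first, your Furstenberg argument for the Lyapunov exponent appeals to irreducibility and noncompactness of a matrix semigroup, but because the shear strength depends on the base point through $p^{-1}\nabla p$, the derivative cocycle here is not an i.i.d.\ matrix product (unlike the Lebesgue case in Appendix~\ref{s:appendixmixing}); the paper instead deduces positivity of the exponent from controllability of the \emph{rescaled derivative process} and a measure-rigidity contradiction (Theorem~\ref{T: Furst}). Second, the framework you invoke requires more than positive Lyapunov exponent plus a spectral gap for the projective chain: one must also produce a drift (Lyapunov) function of the form~\eqref{e: lyapunovfunctionform} for the two-point process and establish $\mathcal{V}$-geometric ergodicity of that process, and one must check the Feller property for all three processes (the paper exploits averaging in $\alpha$ to get Feller for the projective process despite the piecewise-smooth sawtooth profile). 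None of these appear in your outline.
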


\begin{remark}
	While the cosine shears (corresponding to~$F(x) = \sin( 2\pi x)$) are more stable numerically, there are many common distributions (such as the Rosenbrock distribution~\cite{Rosenbrock60}) where the critical points of~$U$ are not isolated.
	In this case, we believe the velocity field~$v$ is still exponentially mixing for~$F(x) = \sin( 2 \pi x )$, however, certain technical aspects of our proof break down.
	When~$F'$ is a sawtooth-shaped function no assumption on critical points of~$U$ is required.

	The condition~\eqref{e:highddegeneracycond} is present only for technical reasons, and Theorem~\ref{t:SinMix} may still be true without it.
	We emphasize, however, that the condition is only required in an (arbitrarily) small ball $B(\hat x, \hat \epsilon)$.
	Any~$C^1$ potential can be modified slightly in a small region to ensure~\eqref{e:highddegeneracycond} holds.
\end{remark}

\begin{remark}\label{r:Rd}
	Before proceeding further we briefly comment on the situation when the state space is~$\R^d$, and not the compact torus.
	First even in the case that $U = \abs{x}^2$, the mixing time in~$\R^d$ is infinite.
	This is because an initial distribution concentrated a distance of~$R$ away from the origin will take time $O(\ln R)$ to mix.
	Thus in order to formulate Theorem~\ref{t:FastConvIntro} in~$\R^d$ one would have to either restrict to mixing times of initial distributions that have support in the same compact set, or only consider the dissipation time as in~\eqref{e:TDisIntro}.
	The bound on the dissipation time works with one additional assumption on~$U$ which we state in Remark~\ref{r:tDisRd}, below.

	In order to apply Theorem~\ref{t:FastConvIntro} in~$\R^d$, however, we would need to construct flows on~$\R^d$ that are exponentially mixing with respect to the density~$\rho_\infty$.
	This is not easy to do, and we are presently not aware of any such examples.
	Instead, a more useful approach, is to assume that~$U$ is strongly convex outside a compact region~$B$, and construct exponentially mixing flows on~$B$.
	We expect such flows can be constructed using the methods in Section~\ref{s:modifiedmixing} using a modified velocity profile, but goes beyond the scope of the present work.
	Once such flows are constructed, the structure of~$U$ will guarantee there are no metastable points outside~$B$, and inside~$B$ the mixing flow will eliminate the effects due to metastability.
\end{remark}

Unfortunately, $D$ and~$\gamma$ depend on~$\kappa$ and the dimension, and the proof of Theorem~\ref{t:SinMix} does not provide any information on the asymptotic behavior of~$D$ and~$\gamma$ as $\kappa \to 0$ and~$d \to \infty$.
We can, however, study a discrete time version of~\eqref{e:Aeq}, and produce exponentially mixing maps for which
\begin{equation}\label{e:Heusristics}
	D = O(\sqrt{d}) e^{O(1/\kappa)}\,,
	\quad
	\gamma = O(1)\,.
\end{equation}
(The precise construction is described in Section~\ref{s:heuristics}, below.)
Suppose, momentarily, that for one of the velocity fields from Theorem~\ref{t:SinMix} we still have~\eqref{e:Heusristics}.
For such velocity fields, we note that~$\norm{\grad v}_{L^\infty} = O(\sqrt{d}/\kappa)$.
Thus choosing~$A = A_0$, where $A_0$ is given by\eqref{e:A0choiceExp}, reduces to choosing
\begin{equation}
	A = O\paren[\Big]{
	\frac{\sqrt{d}}{\kappa^2}
	}\,,
\end{equation}
in order to obtain the polynomial in~$1/\kappa$ mixing time bounds stated in~\eqref{e:PolyMix}.
Note that with this choice, the drift term in~\eqref{e:Aeq} is of size~$O(d / \kappa^3)$.

\begin{figure}[htb]
		\centering
		\begin{minipage}{.3\textwidth}
			\centering
			\includegraphics[width=1\linewidth]{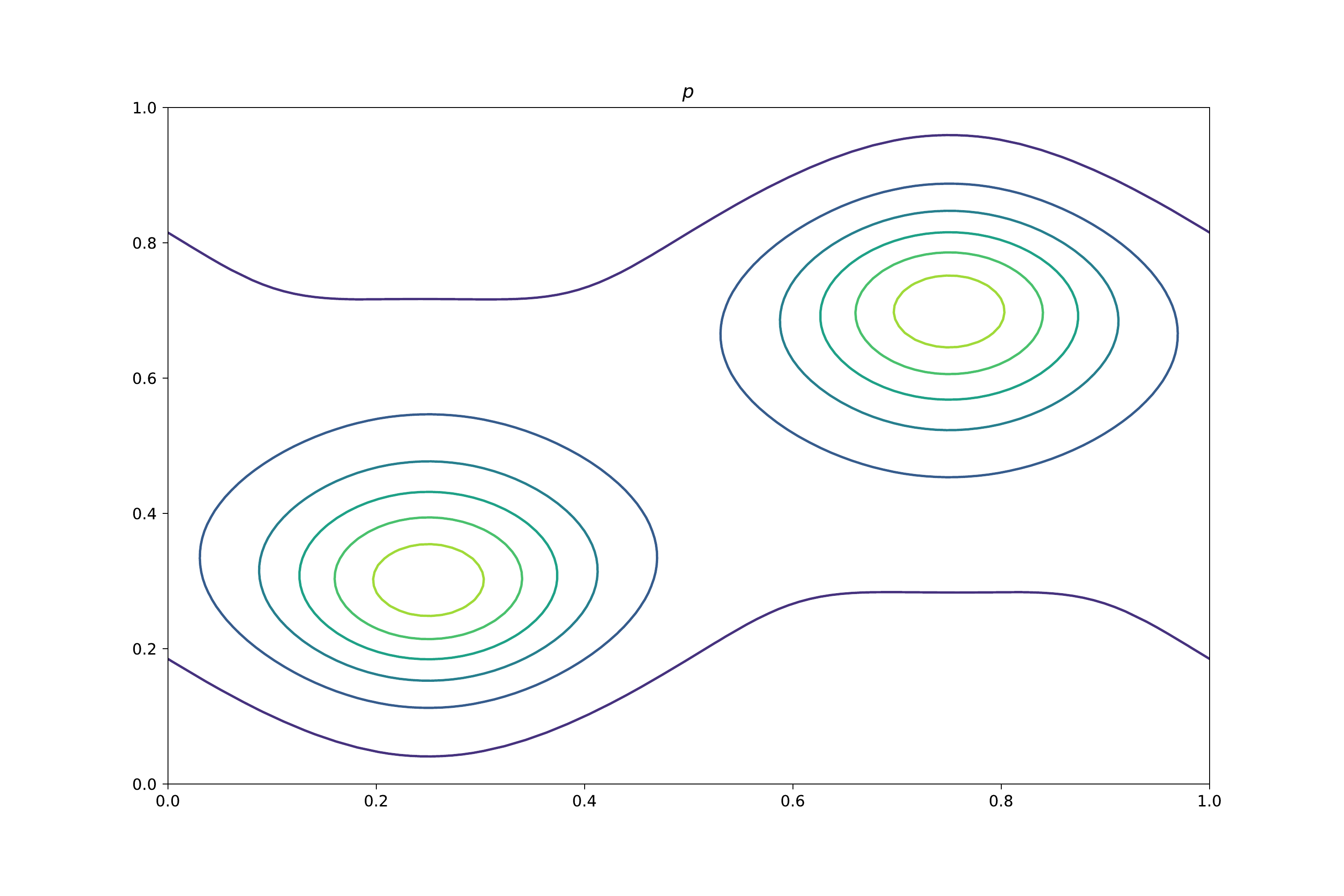}
		\end{minipage}%
		\begin{minipage}{.3\textwidth}
			\centering
			\includegraphics[width=1\linewidth]{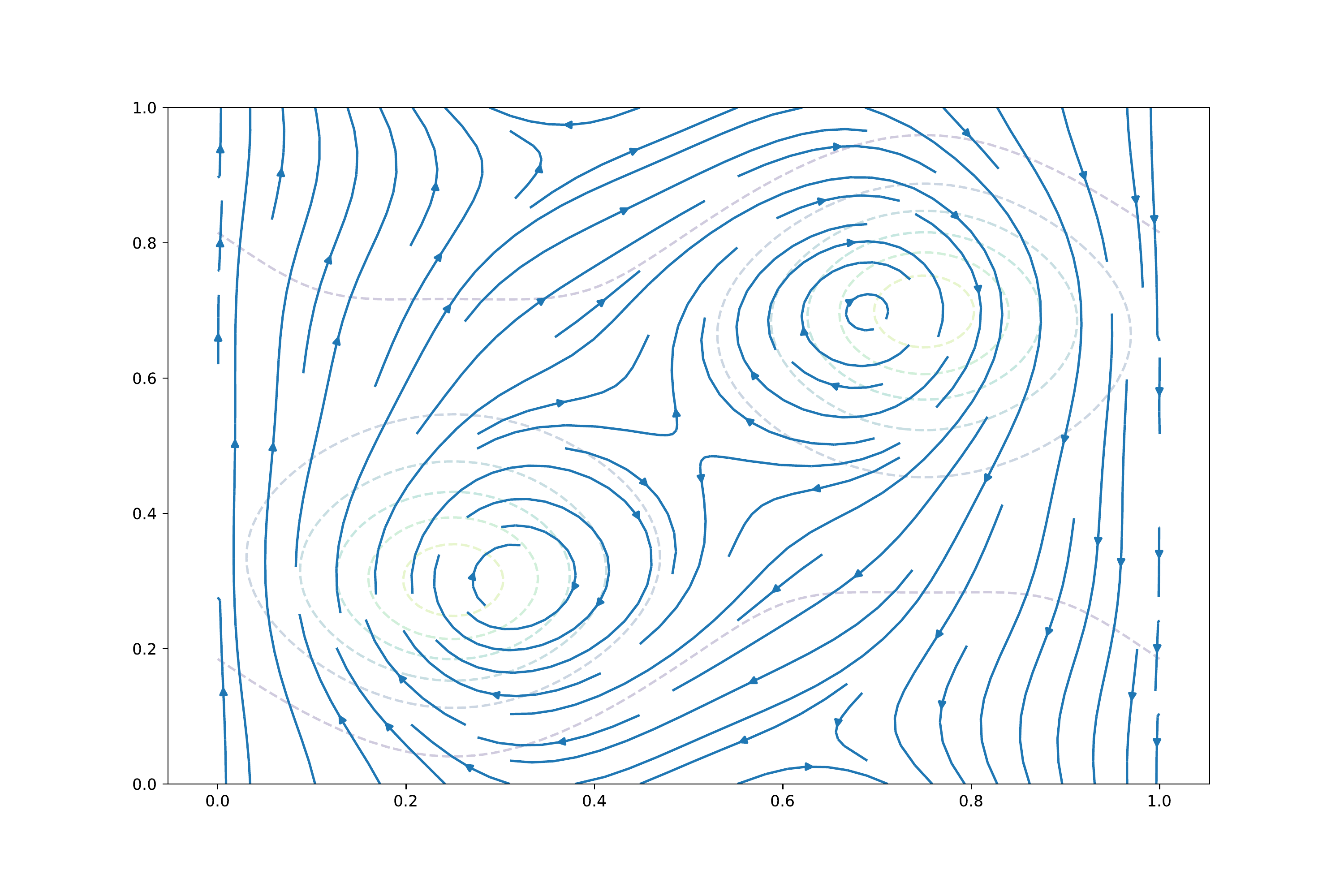}
		\end{minipage}%
		\begin{minipage}{.3\textwidth}
			\centering
			\includegraphics[width=1\linewidth]{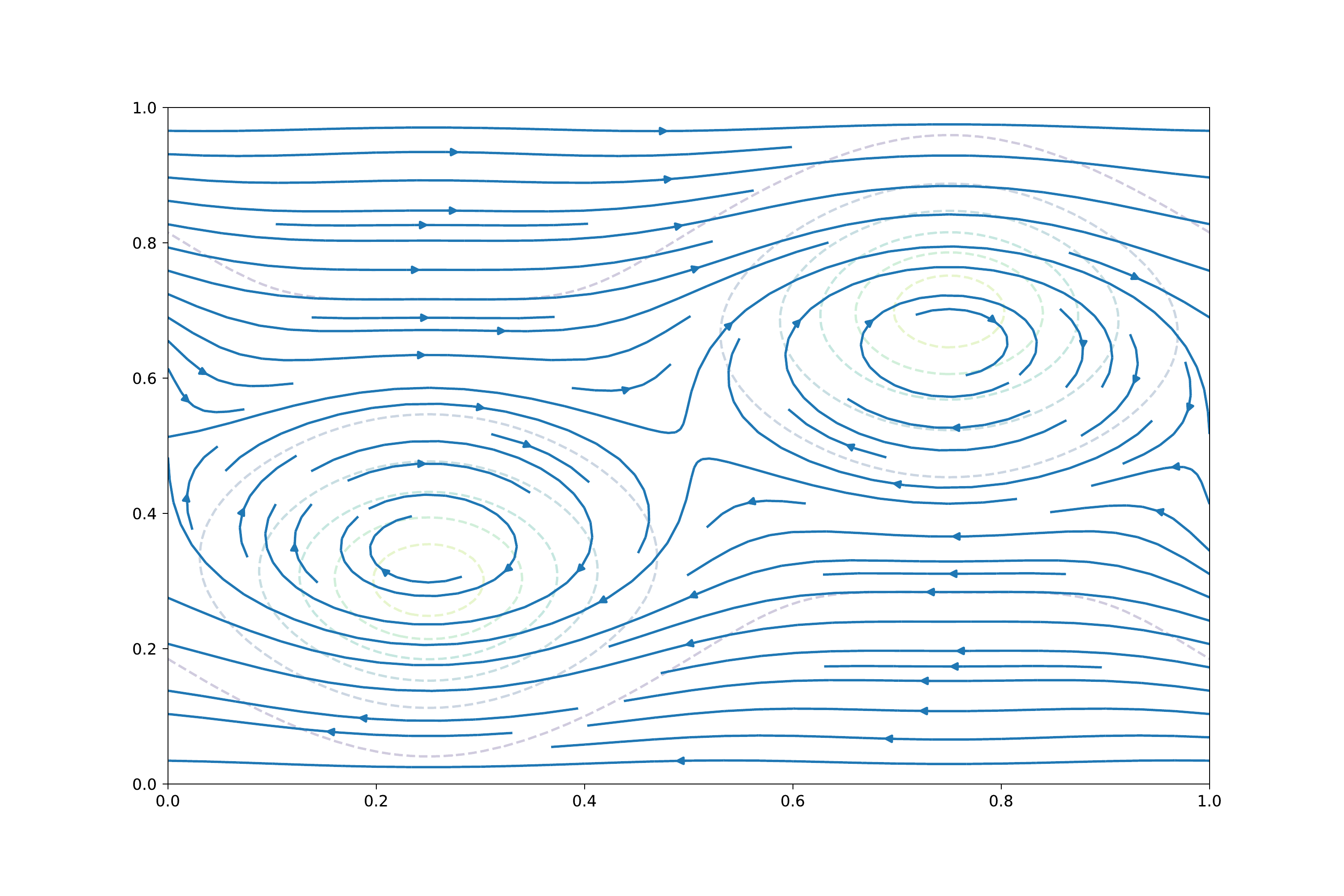}
		\end{minipage}
		\caption{Level sets of~$p$ (left), and stream plots of~$v$ (center, right) corresponding to a modified vertical and horizontal shear respectively.}
		\label{f:stream}
\end{figure}
To illustrate our results numerically, we choose a double well potential~$U$
\begin{equation}\label{e:Sinpotential}
	U \defeq (\sin^2(\pi(x_1-.75))+\sin^2(\pi(x_2-.7)))(\sin^2(\pi(x_1-.75))+\sin^2(\pi(x_2+.7))\,,
\end{equation}
which has two minima at the points $(0.25, 0.3)$ and~$(0.75, 0.7)$.
Rather than choosing~$v$ according to~\eqref{e:vDefAltShear}, it is numerically more convenient to choose
\begin{equation}\label{e:psiNumerics}
	\psi=M_t\left(\sin^2(2 \pi \omega t) \sin(2\pi (x_1-B_{1, t})+\cos^2(2\pi \omega t)\sin(2\pi(x_2-B_{2, t}))\right),
\end{equation}
and then define~$v$ according to~\eqref{e:vDef}.
Here~$\omega > 0$ is a parameter, $M$ is a mean reverting Ornstein--Uhlenbeck process, $B_{i}$ are Brownian motions, and $M$, $W$, and $B_i$ are mutually independent.
Note when $\omega t \in \pi \Z$, the stream function $\psi$ only depends on~$x_2$, and when $\omega t \in (\pi + \frac{1}{2}) \Z$, the stream function~$\psi$ only depends on~$x_1$.
Thus this is a time continuous way of choosing the velocity fields defined in~\eqref{e:vDef}, with~$\omega$ controlling the frequency at which the fields switch direction.
Level lines of the function~$p$ (equation~\eqref{e:pdef}), and a stream plot of~$v$ at times~$t = 0$ and~$t = (2\pi + 1) / (2\omega)$ are shown in Figure~\ref{f:stream}.
Of particular interest is the fact that~$v$ is not $0$ at the local minima of~$U$, and this is what allows solutions to~\eqref{e:Aeq} to quickly escape the metastable traps at critical points.

We now solve equation~\eqref{e:Aeq} numerically with $\kappa = 1/70$ and choose the initial distribution to be the delta measure located at $(0.75, 0.7)$ (one of the local minima of~$U$).
In a short amount of time solutions to both equations fill out a neighborhood of the local minimum that they start at.
However, since local minima of~$U$ are metastable traps for~\eqref{e:langevin}, very few realizations of solutions to~\eqref{e:langevin} are able to leave this neighborhood.
As a result, very few of these points are present in the other local minimum of~$U$ located at~$(0.25, .3)$ (see Figure~\ref{f:DistMixed}, left).
In contrast, solutions to~\eqref{e:Aeq} are not trapped at critical points for as long, many realizations of solutions to~\eqref{e:Aeq} quickly find their way to the other local minimum (see Figure~\ref{f:DistMixed}, center, right).
At the final time in our simulations ($T = 3.14$), the distribution of solutions to~\eqref{e:Aeq} were close to the stationary distribution, but the distribution of solutions to~\eqref{e:langevin} were not.
\begin{figure}[hbt]
	\centering
    \begin{minipage}{.3\textwidth}
		\centering
		\includegraphics[width=1\linewidth]{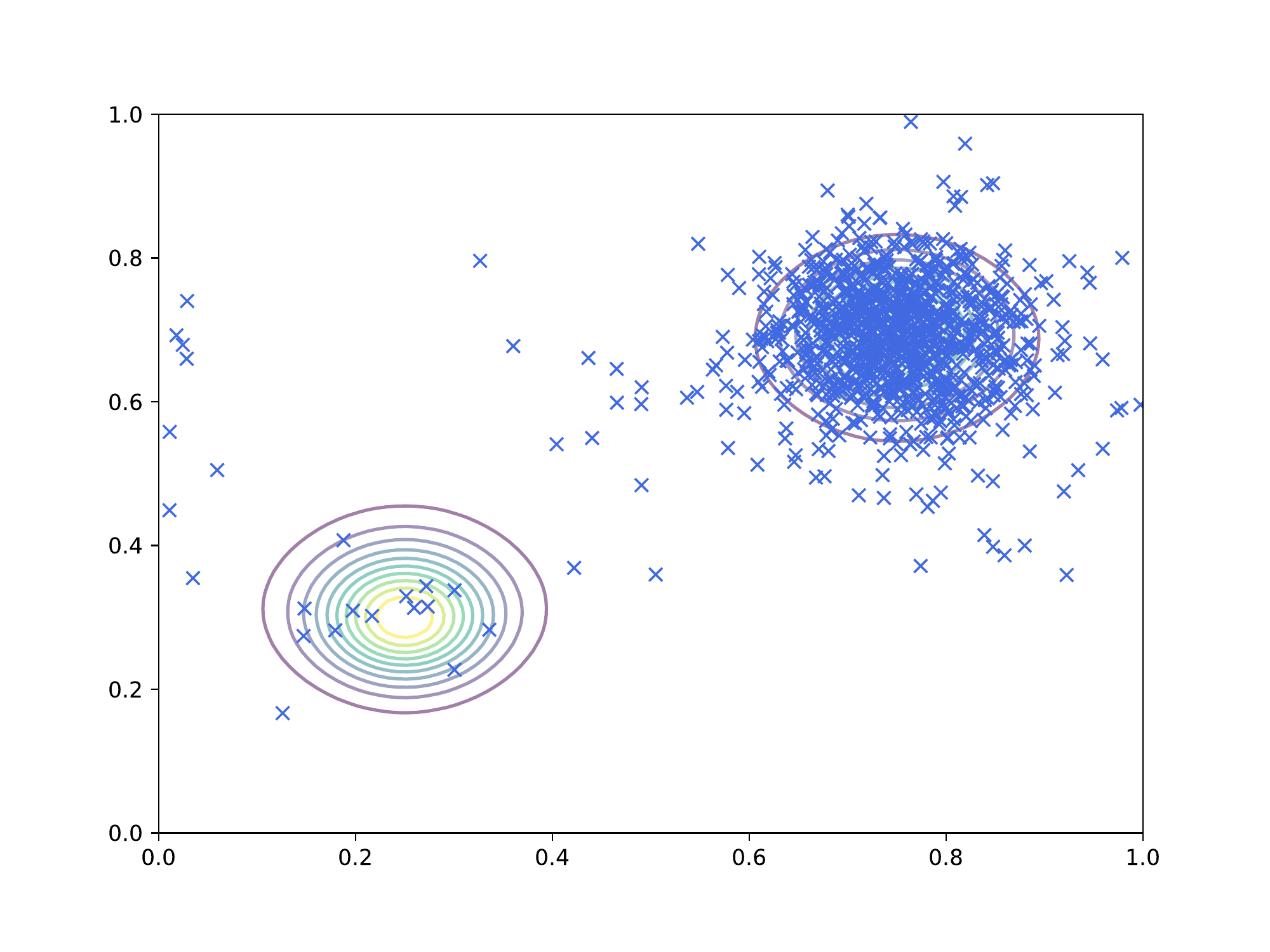}
		{\footnotesize Equation~\eqref{e:langevin} at time $T=3.14$}
	\end{minipage}%
	\begin{minipage}{.3\textwidth}
		\centering
		\includegraphics[width=1\linewidth]{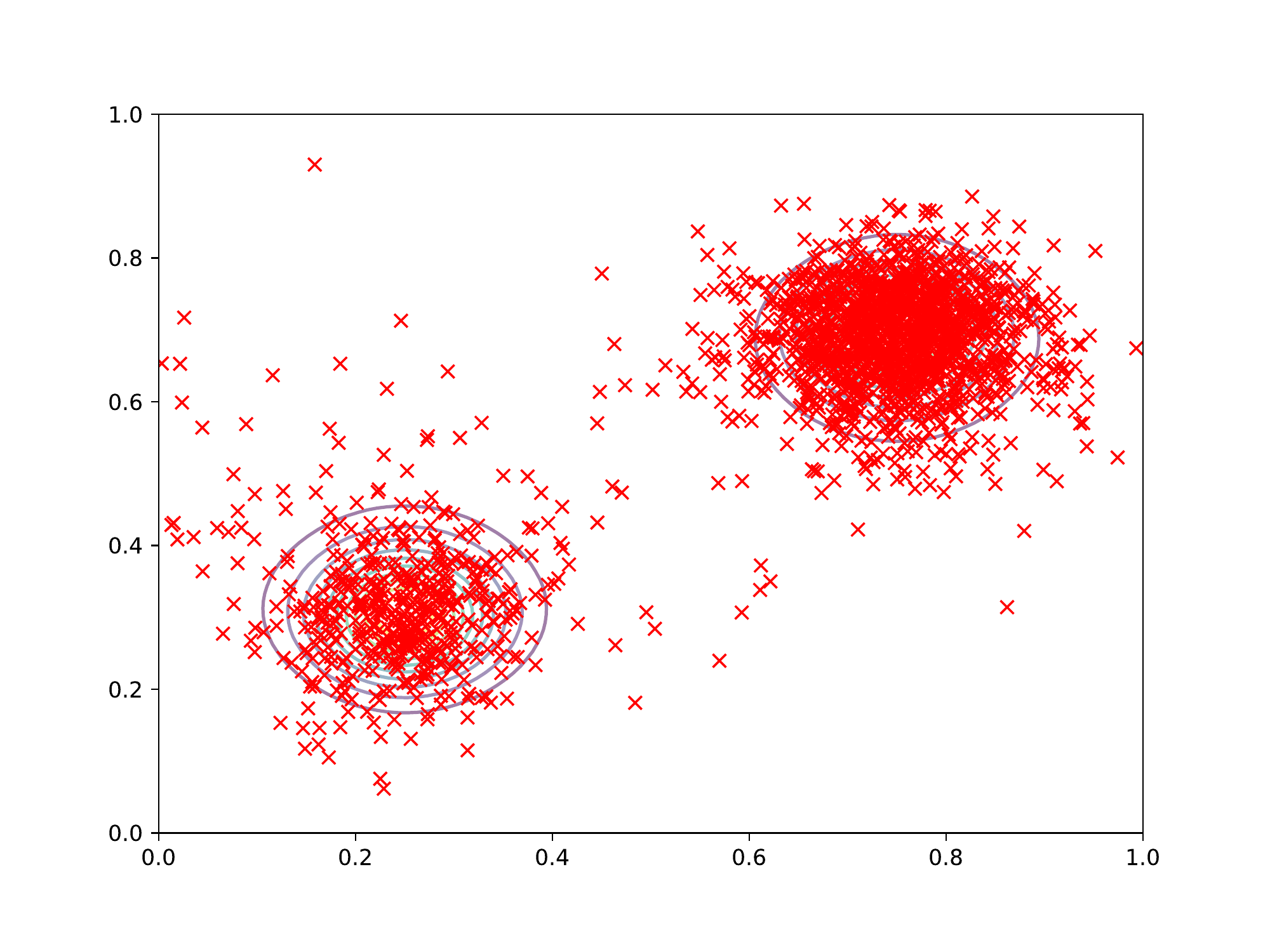}
		{\footnotesize Equation~\eqref{e:Aeq} at time $T=0.63$}
	\end{minipage}%
	\begin{minipage}{.30\textwidth}
    	\centering
    	\includegraphics[width=1\linewidth]{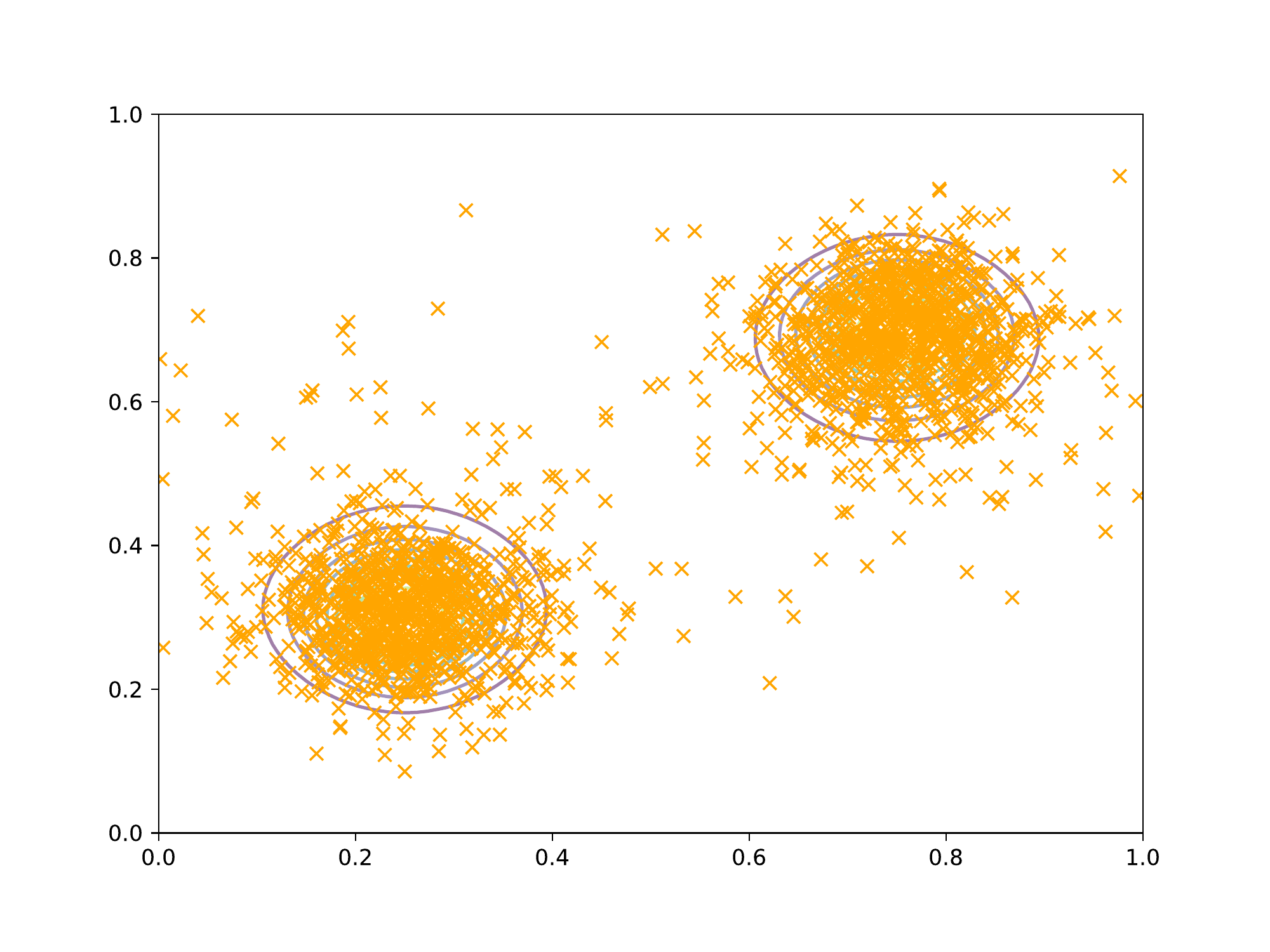}
			{\footnotesize Equation~\eqref{e:Aeq} at time $T=3.14$}
    \end{minipage}
    \caption{
			Distribution of $2000$ realizations of solutions to~\eqref{e:langevin} and~\eqref{e:Aeq} with a double well potential, $\kappa=1/70$, $A=3500$ and an adaptive time step method with~$dt \approx 10^{-5}$.
			Solutions to~\eqref{e:langevin} take a very long time to leave a neighborhood of a local minimum of~$U$, where as solutions to~\eqref{e:Aeq} leave it quickly and approach the stationary distribution much faster. 
		}
		\label{f:DistMixed}
\end{figure}
\smallskip

Despite the mixing time of~\eqref{e:Aeq} being an order of magnitude smaller than that of~\eqref{e:langevin}, there are a few issues that increase the complexity when solving~\eqref{e:Aeq} numerically.
Explicitly, solving equation~\eqref{e:langevin} is relatively easier as the largest term on the right is of size~$O(1)$.
Solving equation~\eqref{e:Aeq}, on the other hand, is relatively harder as the added drift has magnitude~$O(\sqrt{d} / \kappa)$.
The trade-off is that one only needs to solve~\eqref{e:Aeq} for time~$O(d / \kappa^2)$, where as, in order to obtain comparable results with~\eqref{e:langevin} one needs to solve it for time~$e^{O(1/\kappa)}$.
Thus we expect that algorithms using~\eqref{e:Aeq} will be useful in the initial exploratory phase of Monte Carlo methods, where accuracy is not as important.
After rapidly exploring the state space using~\eqref{e:Aeq}, it may be better to use other methods such as~\cite{
	RobertsTweedie96,
	MaChenEA15,
	BouchardCoteVollmerEA18,
	FearnheadBierkensEA18,
	BierkensFearnheadEA19,
	LuLuEA19,
	GlattHoltzKrometisEA21,
	LuSlepcevEA22
}.
\medskip

Finally, we conclude this section by stating a few questions that we are presently unable to address.
\begin{enumerate}[(1)]
	\item
		The most important question we are unable to address is to rigorously describe the asymptotic behavior of~$D$ and~$\gamma$ as~$\kappa \to 0$.
		The heuristics~\eqref{e:Heusristics} (which we can prove for a time discrete example) leads to the polynomial mixing time bounds~\eqref{e:PolyMix}, which is an order of magnitude smaller than the mixing time of~\eqref{e:langevin}.
		We are presently unaware of techniques that provide any rigorous bounds on~$D$ and~$\gamma$ as~$\kappa \to 0$.

	\item
		Since the numerical complexity increases with~$A$, it is useful to find velocity fields~$v$ where~$A_0$ can be chosen to be small.
		From~\eqref{e:A0choiceExp} this entails finding velocity fields for which~$\norm{\grad v}_{L^\infty}$ and $D$ are small, and~$\gamma$ is large.
		Are there velocity fields~$v$ for which one expects~$\gamma = O(1)$, and~$D = O(1/\kappa)$, as opposed to the bounds in~\eqref{e:Heusristics}?
  \item
		What happens when we work on $\R^d$ instead of~$\T^d$?
		The current proof techniques will still allow us to prove~\eqref{e:TDisIntro}.
		However, we are unable to prove~\eqref{e:TMixIntro}, or Theorem~\ref{t:SinMix} in~$\R^d$ as explained in Remark~\ref{r:Rd}.
	\item
		Even though we can prove that the velocity field in~\eqref{e:vDefAltShear} is almost surely exponentially mixing, it is computationally intensive as it involves changing the velocity field discontinuously at many points in time.
		It also requires choosing~$A$ large, which further increases the computational cost.
		Numerically we found the continuous time modification~\eqref{e:psiNumerics}, described above, yielded better results, presumably because of discretization artifacts.
		In the present work we make no mention of efficient discretizations of~\eqref{e:Aeq} for the (chaotic) velocity field~\eqref{e:vDefAltShear} (or the time continuous version~\eqref{e:psiNumerics}).
		Discretizations of~\eqref{e:langevin} and~\eqref{e:Aeq} have been extensively studied by many authors~\cite{
			JordanKinderlehrerEA98,
			DuncanPavliotisEA17,
			Wibisono18,
			Chewi23
		},
		and an important question is to choose velocity that minimize the computational cost of such schemes.
\end{enumerate}

\subsection*{Plan of this paper.}
	In Section~\ref{s:FastConvProof} we precisely define the notion of exponential mixing, and split the proof of Theorem~\ref{t:FastConvIntro} into two steps: Obtaining dissipation time bounds (Theorem~\ref{t:fconv}), and obtaining mixing time bounds (Proposition~\ref{p:tdisTmix}).
	We prove Theorem~\ref{t:fconv} in Section~\ref{s: mukapmix}, and prove Proposition~\ref{p:tdisTmix} in Section~\ref{s:tmixtdis}.
	In Section~\ref{s:heuristics} we study a time discrete version of~\eqref{e:Aeq} and produce exponentially mixing maps for which the $\kappa$-dependence of the mixing rate is known.
	(This is used to motivate the heuristics~\eqref{e:Heusristics}.)
	In Section~\ref{s:modifiedmixing} we prove Theorem~\ref{t:SinMix} and produce (random) velocity fields that are (almost surely) exponentially mixing.
	Finally, in Appendix~\ref{s:appendixmixing}, we show that a family of randomly shifted localized tent shaped shear flows almost surely generates an exponentially mixing flow for the Lebesgue measure.
	We provide this simpler example since the proof is simpler than the proofs done in Section~\ref{s:modifiedmixing}, and does not involve technical calculations checking the H\"ormander type conditions.

\subsection*{Acknowledgements}
The authors would like to thank
Nathan Glatt-Holtz,
Justin Krometis,
and
Dejan Slep\v cev,
for helpful comments and discussions.

\section{Mixing Time Bounds (Theorem \ref{t:FastConvIntro})}\label{s:FastConvProof}
The goal of this section is to fix our notation, precisely recall the notions of mixing rate, mixing time and dissipation time (used in Theorems~\ref{t:FastConvIntro} and~\ref{t:SinMix}), and to state two stronger results that immediately imply Theorem~\ref{t:FastConvIntro}.
The first result (Theorem~\ref{t:fconv}, below) obtains a dissipation time bound in terms of the mixing rate of the imposed velocity field~$v$.
When the mixing rate is exponential, this quickly reduces to the bound~\eqref{e:TDisIntro}.
The second result (Proposition~\ref{p:tdisTmix}, below) bounds the mixing time in terms of the dissipation time, allowing us to deduce~\eqref{e:TMixIntro} from~\eqref{e:TDisIntro}.
The heart of the matter lies in the proofs of Theorem~\ref{t:fconv} and Proposition~\ref{p:tdisTmix}, which we do in Sections~\ref{s: mukapmix} and~\ref{s:tmixtdis} respectively.

\subsection{Mixing rates}\label{s:mixing}

We begin by fixing our notation and precisely defining the notion of \emph{exponential mixing}, which was used in the statements of both Theorems~\ref{t:FastConvIntro} and~\ref{t:SinMix}.
Throughout this paper we will always assume the potential~$U$ is a $C^2$ periodic function, $p$ is defined by~\eqref{e:pdef}, $\rho_\infty$ is defined by~\eqref{e:pdfintro}, and~$\mu$ is the probability measure
\begin{equation}\label{e:mukappadef}
	\mu(dx)
		= \frac{p(x)}{Z} \, dx
		= \rho_\infty(x) \, dx\,.
\end{equation}
Define the $L^2(\mu)$ inner-product, $L^2(\mu)$ norm, and $\dot H^1(\mu)$ norms by
\begin{equation}
	\ip{f, g}_\mu = \int_{\T^d} f g \, d\mu\,,
	\quad
	\norm{f}_{L^2(\mu)}^2 = \ip{f, f}_\mu\,,
	\quad\text{and}\quad
	\norm{f}_{\dot H^1(\mu)} = \norm{\grad f}_{L^2(\mu)}\,.
\end{equation}
respectively, and the corresponding spaces are defined in the usual way.
Define
\begin{equation}\label{e:dotL2}
	\dot L^2(\mu) = \set[\Big]{ f \in L^2(\mu) \st \int_{\T^d} f \, d\mu = 0 }\,,
\end{equation}
to be the subspace of $\mu$-mean-zero functions.

Given a (time dependent) Lipschitz velocity field~$v$, define the flow to be the solution of the ODE
\begin{equation}\label{e:flowdef}
	\partial_t \Phi_{s, t} = v_t \circ \Phi_{s, t}\,,
	\quad \Phi_{s, s}(x) = x\,.
\end{equation}
It is easy to verify that~$\Phi$ preserves the measure~$\mu$ if and only if~$v$ satisfies~\eqref{e:measurePreserving}.
The notion of \emph{mixing} in dynamical systems, requires the flow to spread mass concentrated in a small region to the entire space as~$t \to \infty$ (see for instance~\cite{KatokHasselblatt95,SturmanOttinoEA06}).
More precisely, a flow is mixing if for every pair of test functions~$f, g \in L^2(\mu)$, we have
\begin{equation}\label{e:mixing}
	\lim_{t \to \infty} \ip{f \circ \Phi_{s, s+t}^{-1}, g}_\mu = \ip{f, 1}_\mu \ip{g, 1}_\mu\,.
\end{equation}
Since~$\Phi$ is invertible, one may equivalently replace~$\Phi_{s, s+t}^{-1}$ above with~$\Phi_{s, s+t}$.

The \emph{mixing rate} is the rate at which the convergence in~\eqref{e:mixing} happens for regular test functions~$f, g$.
The standard choice in dynamical systems is to choose~$f, g$ to be H\"older continuous, or~Lipschitz.
However, for our purposes, it is more convenient to use Sobolev regular functions instead.
For convenience, we will further assume the test functions are mean-zero so the right-hand side of~\eqref{e:mixing} vanishes.
We define the flow of~$v$ to be mixing with rate~$h$ if
\begin{equation}\label{e:mixingratedef}
	\sup_{f, g \in \dot H^1(\mu), s \in \R} \ip{ f\circ \Phi_{s, s+t}^{-1}, g }_{\mu} \leq h(t)\norm{f}_{\dot H^1(\mu)}\norm{g}_{\dot H^1(\mu)}\,.
\end{equation}
The function~$h \colon [0, \infty) \to (0, \infty)$ above is called the mixing rate, and it is always assumed to be a continuous decreasing function that vanishes at infinity.
The flow of~$v$ is said to be \emph{exponentially mixing} if the mixing rate~$h$ 
is in the form~\eqref{e:expMixRate} for some finite constants~$D, \gamma$, that may depend on $\kappa$.

Constructing exponentially mixing flows is not an easy task, and has been studied extensively in the dynamical systems literature~\cite{Anosov67,Pollicott85,Dolgopyat98,Liverani04,ButterleyWar20,TsujiiZhang23}.
Unfortunately, these results are all on manifolds other than the standard torus, which is not relevant to the scenario studied in the present paper.
Several authors have recently constructed (time dependent) examples of exponential mixing on the standard torus~\cite{DolgopyatKaloshinEA04,AlbertiCrippaEA19,ElgindiZlatos19,BianchiniZizza21,BedrossianBlumenthalEA22,BlumenthalCotiZelatiEA22}.
However, all these examples preserve the Lebesgue measure and not the measure~$\mu$ as we require.
We will shortly show (Section~\ref{s:modifiedmixing}, below) that the flow defined in~\eqref{e:vDefAltShear} is exponentially mixing (as stated in Theorem~\ref{t:SinMix}).
Postponing further discussion of this to Section~\ref{s:modifiedmixing}, we will now show how such flows can be used to improve the mixing time of~\eqref{e:Aeq}.

\subsection{Dissipation time bounds}\label{s:dtime}

We now recall the notion of~\emph{dissipation time}, and provide an upper bound on the dissipation time in terms of the mixing rate of the flow.
The results are similar to those in~\cite{FengIyer19}.
In our context, the added difficulty is that the measures~$\mu$ concentrate in regions with volume~$O(\kappa^{d/2})$, 
and so we need to track the dependence on both~$\kappa$ and the mixing rate.

Roughly speaking, the \emph{dissipation time} (see~\cite{FannjiangWoowski03,FengIyer19}) measures 
the rate at which the distribution of~$X_t$ approaches stationary distribution~$\mu$ in~$L^2(\mu)$ as~$t \to \infty$, when the initial distribution is also~$L^2(\mu)$.
Precisely, the \emph{dissipation time} of the process~$X$ is defined by
\begin{equation}\label{e:tdisdef}
	\tdis \defeq 
		\inf\set[\Big]{
			t \geq 0 \st
				\sup_{s \geq 0}
					\norm{ \theta_t }_{L^2(\mu)}
					\leq \frac{1}{2} \norm{ f }_{L^2(\mu)} \,,
				~\forall f \in \dot L^2(\mu)
		} \,,
\end{equation}
where
\begin{equation}\label{e:ThetaDef}
	\theta_t(x) = \E^{(x,s)} f(X_t) = \int_{\T^d} \rho(x, s; y, t) f(y) \, dy\,,
\end{equation}
and $\rho(x, s; y, t)$ denotes the transition density of the process~$X$.

The Poincar\'e inequality (Lemma~\ref{l:poincare}, below) quickly implies that
\begin{equation}\label{e:tDisPoincare}
	\tdis \leq \frac{1}{\kappa} \exp\paren[\Big]{ \frac{\norm{U}_\osc}{2 \kappa}}\,,
\end{equation}
which is too large to be of practical interest.
The first result we state is that if~$v$ is mixing, then it can be rescaled to ensure~$\tdis$ is much smaller than the right-hand side of~\eqref{e:tDisPoincare}.

\begin{theorem}\label{t:fconv}
	Let~$\tdis = \tdis(A,\kappa, v)$ be the dissipation time of the process~$X$ defined by~\eqref{e:Aeq}.
	For every sufficiently small $\kappa>0$, there exists $A_0 = A_0(\kappa) < \infty$, independent of the dimension, such that
	\begin{equation}\label{e:tdis}
		\tdis = \tdis(\kappa, A) \leq \frac{16 ( 1 + \ln 2 )}{H(A)}
		\qquad\text{for all } A> A_0(\kappa) \,.
	\end{equation}
	Here $H(A) = H(A, \kappa)$ is defined to be the unique solution of
	\begin{align}\label{e:H}
			\frac{\kappa}{4 H(A)}
				= h\paren[\Big]{ \frac{1}{16} \paren[\Big]{
					\frac{A}{H(A) \norm{\grad v}_{L^\infty}} }^{1/2} } \,.
	\end{align}
\end{theorem}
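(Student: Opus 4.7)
My plan is to adapt the Fourier--splitting strategy of Feng--Iyer~\cite{FengIyer19} to the weighted space $L^2(\mu)$. The starting point is the energy identity
\begin{equation*}
  \frac{d}{dt}\|\theta_t\|_{L^2(\mu)}^2 = -2\kappa\|\nabla\theta_t\|_{L^2(\mu)}^2,
\end{equation*}
obtained by differentiating and using that the measure-preservation condition~\eqref{e:measurePreserving} makes the large advection $Av_{At}\cdot\nabla$ antisymmetric in $L^2(\mu)$, while the Langevin piece $-\nabla U\cdot\nabla + \kappa\Delta$ yields the standard Dirichlet form $-\kappa\|\nabla\theta\|_{L^2(\mu)}^2$ after integration by parts. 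Observe that $A$ drops out of this identity, so on its own it gives only the weak bound~\eqref{e:tDisPoincare}; the mixing of the fast flow must be exploited to amplify the dissipation.

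I will then translate the mixing hypothesis~\eqref{e:mixingratedef} into a negative-Sobolev decay for the pure-advection propagator $\tilde S_{s,t}$ of $\partial_t\psi + Av_{At}\cdot\nabla\psi = 0$. By duality and the time rescaling---physical time $r = t-s$ corresponds to $v$-natural time $Ar$---one obtains $\|\tilde S_{s,t}\psi\|_{\dot H^{-1}(\mu)} \leq h(A(t-s))\|\psi\|_{\dot H^1(\mu)}$. Combined with the interpolation $\|\theta\|_{L^2(\mu)}^2 \leq \|\theta\|_{\dot H^{-1}(\mu)}\|\theta\|_{\dot H^1(\mu)}$, this converts mixing into an $L^2(\mu)$ contraction, provided the $\dot H^1(\mu)$ norm is controlled. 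I will compare the full flow with the pure advection via Duhamel's formula and bound the diffusive error by a time integral of $\kappa\|\nabla\theta\|^2$, absorbing it using the energy identity.

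The heart of the proof is a one-step contraction iterated over $N$ sub-intervals $[k\delta, (k+1)\delta]$ that partition $[0,\tau]$. On each sub-interval I will (a) apply a Chebyshev-type argument to the energy identity to locate an intermediate time at which $\|\theta\|_{\dot H^1(\mu)} \lesssim \|\theta_{k\delta}\|_{L^2(\mu)}/\sqrt{\kappa\delta}$, thereby regularizing data that initially live only in $\dot L^2(\mu)$; (b) invoke the mixing bound on pure advection to control $\|\theta_{(k+1)\delta}\|_{\dot H^{-1}(\mu)}$; (c) use the growth estimate $\|\tilde S_{s,t}\psi\|_{\dot H^1(\mu)} \lesssim e^{cA(t-s)\|\nabla v\|_{L^\infty}}\|\psi\|_{\dot H^1(\mu)}$ to bound $\|\theta_{(k+1)\delta}\|_{\dot H^1(\mu)}$; and (d) combine via the interpolation to extract a per-step contraction of the $L^2(\mu)$ norm. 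Taking the product of these contractions over the $N$ steps and demanding it beat $\tfrac12$ produces an implicit constraint among $\tau$, $\delta$, $h$, $\kappa$, and $\|\nabla v\|_{L^\infty}$.

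The delicate point is the choice of $\delta$: it must be long enough that $h(A\delta)$ is substantially below $1$, yet short enough that $e^{cA\delta\|\nabla v\|_{L^\infty}}$ remains bounded. Balancing these opposing requirements produces the optimal scale $\delta \sim 1/\sqrt{A\|\nabla v\|_{L^\infty}H}$, which is precisely what generates the square root and the factor $\|\nabla v\|_{L^\infty}$ inside $h$ in~\eqref{e:H}. The main obstacle I anticipate is keeping all multiplicative constants tight through the iteration---the specific numerics $\tfrac{1}{16}$, $\tfrac14$, and $16(1+\ln 2)$ arise from accumulating small losses at each step---so I must ensure (i) the Duhamel diffusion correction remains strictly subdominant on each sub-interval, which amounts to $\kappa\delta$ being small compared with the mixing gain, and (ii) every estimate depends on $v$ only through the scalar $\|\nabla v\|_{L^\infty}$, so that the threshold $A_0$ can indeed be chosen independently of the dimension.
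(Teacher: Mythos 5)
Your plan shares the paper's starting points (the energy identity of Lemma~\ref{l:h1Large}, the comparison with the pure transport solution, and the conversion of the mixing hypothesis into $L^2(\mu)$ decay), but the mechanism you propose in steps (c)--(d) does not close. You bound $\norm{\theta_{(k+1)\delta}}_{\dot H^1(\mu)}$ by the Gr\"onwall growth factor $e^{cA\delta\norm{\grad v}_{L^\infty}}$ and then pit this against the mixing decay $h(A\delta)=De^{-\gamma A\delta}$. Since a Lipschitz flow cannot mix faster than $e^{-Ct\norm{\grad v}_{L^\infty}}$, one always has $\gamma\leq C\norm{\grad v}_{L^\infty}$, so the product $h(A\delta)\,e^{cA\delta\norm{\grad v}_{L^\infty}}$ need not be small for \emph{any} $\delta$. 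Quantitatively, at your claimed optimal scale $\delta\sim(AH\norm{\grad v}_{L^\infty})^{-1/2}$ one has $A\delta\norm{\grad v}_{L^\infty}=\sqrt{A\norm{\grad v}_{L^\infty}/H}\gg1$ (for $A\approx A_0$ this is of order $\norm{\grad v}_{L^\infty}\gamma^{-1}\ln(D/\kappa)$), so the growth factor is unbounded; shrinking $\delta$ to $1/(A\norm{\grad v}_{L^\infty})$ tames the growth but freezes $h(A\delta)$ at the $A$-independent value $h(c/\norm{\grad v}_{L^\infty})$, destroying the enhancement. Your two requirements on $\delta$ are therefore incompatible, and the square root in~\eqref{e:H} does not come from this balance at all --- it comes from the diffusive comparison error of Lemma~\ref{l:diff}, via the constraint $t_0\sqrt{A\lambda_N\norm{\grad v}_{L^\infty}}\leq\tfrac14$. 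A second, independent loss of $\kappa$-uniformity lurks in step (b): converting the Duhamel error $\norm{\theta-\phi}_{L^2(\mu)}$ into $\dot H^{-1}(\mu)$ costs the inverse square root of the spectral gap of $-\mathcal L_\kappa$, which by Lemma~\ref{l:poincare} can be as large as $e^{O(1/\kappa)}$; this would force $A_0$ to be exponentially large in $1/\kappa$ and void Remark~\ref{r:A0choiceExp}.

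The paper's proof removes both obstructions with a spectral projection $P_N$ onto the eigenfunctions of $-\mathcal L_\kappa$ with eigenvalue at most $H(A)$ (available by Weyl's law, Lemma~\ref{l:spectral}). The mixing inequality~\eqref{e:mixingratedef} is tested against $P_N\phi_s$, whose $\dot H^1(\mu)$ norm is bounded by $\sqrt{\lambda_N/\kappa}\,\norm{\phi_s}_{L^2(\mu)}=\sqrt{\lambda_N/\kappa}\,\norm{\theta_0}_{L^2(\mu)}$ by the inverse Poincar\'e inequality on the low modes --- a bound with \emph{no} time growth, which is exactly what your Gr\"onwall estimate fails to provide --- while the complementary piece $(I-P_N)\theta$ feeds directly into the Dirichlet energy, so the $L^2$ comparison error is never converted to $\dot H^{-1}(\mu)$. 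This is packaged as a contradiction argument on the time-integrated Dirichlet energy (Lemma~\ref{l:H1small}) rather than a per-step contraction, and the global decay~\eqref{e:globaldecay} follows by alternating with the high-frequency case. If you wish to salvage an interpolation-style argument, you must replace step (c) by a second Chebyshev selection of the endpoint time and write $\norm{\theta_\tau}_{L^2(\mu)}^2\leq\ip{\phi_\tau,\theta_\tau}_\mu+\norm{\theta_\tau-\phi_\tau}_{L^2(\mu)}\norm{\theta_\tau}_{L^2(\mu)}$ to dodge the Poincar\'e constant; as written, the proposal does not prove the theorem.
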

\begin{remark}\label{r:A0choice}
	During the course of the proof of Theorem~\ref{t:fconv} we will see that~$A_0$ should be chosen so that both
	\begin{equation}\label{e:A0Def}
		A_0 \geq 256 \Lambda \norm{\grad v}_{L^\infty} \paren[\Big]{ h^{-1}\paren[\Big]{ \frac{\kappa}{4 \Lambda} } }^2\,,
		\quad\text{and}\quad
		H(A_0) \geq \frac{\kappa}{4 h\left(\frac{1}{32 \sqrt{2}\norm{\nabla v}_{L^\infty}}\right)}\,.
	\end{equation}
	Here~$\Lambda = \Lambda(\kappa , d)$ is a constant that is chosen to ensure a growth condition on eigenvalues of the generator of~\eqref{e:langevin} (see~\eqref{e:LambdaDef}, below).
	By Weyl's law (specifically from~\eqref{e:Nlambda}) one can check that~$\Lambda = O(\kappa)$ as~$\kappa \to 0$.
	Thus, in the case~$h$ is given by~\eqref{e:expMixRate}, the bound~\eqref{e:A0Def} reduces to~\eqref{e:A0choiceExp} stated in Remark~\ref{r:A0choiceExp}.
\end{remark}

\begin{remark}\label{r:tDisRd}
	Theorem~\ref{t:fconv} still holds when the state space is~$\R^d$, provided the spectrum of the generator of~\eqref{e:langevin} is discrete and the eigenvalues grow according to Weyl's law (as stated in Lemma~\ref{l:spectral}, below).
	It is well known (see for instance Chapter~4 in~\cite{Pavliotis14}) that both these conditions hold provided the potential~$U$ satisfies
	\begin{equation}\label{a:Rd}
		\lim_{|x|\to +\infty} \Big( \frac{|\nabla U(x)|^2}{2}-\Delta U(x)\Big)=+\infty\,.
	\end{equation}
\end{remark}

The main idea behind the proof of Theorem~\ref{t:fconv} is to obtain bounds on the $L^2(\mu)$ decay of solutions to the associated PDE.
We do this by a spectral splitting method that is commonly used in the study of such equations.
Namely, we divide the analysis into two cases:
When~$\theta$ has most of its energy in large frequencies, the standard energy inequality shows that~$\norm{\theta}_{L^2(\mu)}$ decays fast.
On the other hand, when~$\theta$ has most of its energy in small frequencies, 
the mixing caused by the convection term $v \cdot \grad \theta$ generate high frequencies, which in turn forces~$\norm{\theta}_{L^2(\mu)}$ to decay fast.
When the underlying measure is the Lebesgue measure on~$\T^d$ a similar result was proved in~\cite{FengIyer19}, and our proof follows the same structure.

Once Theorem~\ref{t:fconv} is established, proving the upper bound~\eqref{e:TDisIntro} in Theorem~\ref{t:FastConvIntro} is simply a matter of choosing~$h$ to be the exponential~\eqref{e:expMixRate}, and simplifying~\eqref{e:H}.
We do this in Section~\ref{s:FastConvProof2}, below.

Notice that as~$A \to \infty$, we must have $H(A) \to \infty$ and hence so~$\tdis$ can be made arbitrarily small.
This, however, is not always computationally advantageous as solving~\eqref{e:Aeq} when~$A$ is large is very computationally intensive.
Moreover, making~$\tdis$ small is
not yet sufficient to guarantee solutions to~\eqref{e:Aeq} escape the metastable traps at local minima of~$U$.
Indeed, if $X_0$ is initially concentrated in a ball $B(x_0, \sqrt{\kappa} )$, after time~$t \geq \tdis$ the process~$X_t$ may still be concentrated in a ball of radius~$B( x_0, C \sqrt{\kappa})$.
Thus, we are not guaranteed~$X$ has explored the state space enough to escape metastable traps around local minima of~$U$.
We are however close: in the next section we will show that in an additional~$O(\tdis / \kappa)$ time, the process~$X$ will escape metastable traps be close to mixed.

\subsection{Mixing time bounds}

We will now study the relation between~$\tdis$ and~$\tmix$.
Recall~\cite{LevinPeres17} the \emph{mixing time} of a Markov process is the time taken for its distribution to become sufficiently close 
(in the total variation norm) to its stationary distribution.
In our context, the mixing time of~$X$ can be defined by
\begin{equation}\label{e:tmixdef}
	\tmix\defeq\inf \set[\Big]{t \geq 0 \st
		\sup_{x \in \T^d\,,\; s \geq 0} \, \int_{\T^d}  \abs[\big]{ \rho(s, x; s+t, y)-\rho_\infty(y)} \, dy \leq \frac{1}{2} }\,,
\end{equation}
where, as before, $\rho$ is the transition density of $X$.

The mixing time is a stronger notion than the dissipation time.
As mentioned earlier, waiting for time~$\tdis$ will not ensure~$X$ has escaped 
the metastable traps at local minima of~$U$; however waiting for time~$\tmix$ will certainly ensure this.

It is easy to see $\tmix$ always dominates~$\tdis$ (see for instance~\cite{IyerZhou22,IyerLuEA23}).
Our interest, however, is to control the mixing time by the dissipation time.
The advantage of this is that the dissipation time can be bounded using $L^2(\mu)$ based spectral methods, such as those used in the proof of Theorem~\ref{t:fconv}.
Thus controlling~$\tmix$ by~$\tdis$ will allow us to use Theorem~\ref{t:fconv} to obtain upper bounds on the mixing time.
Our next result provides upper (and lower) bounds on the mixing time in terms of the dissipation time.

\begin{proposition}\label{p:tdisTmix}
	There exists a universal (dimension independent) constant~$C$, that is independent of $U$, $v$, $\kappa$ and~$A$, such that
	\begin{equation}\label{e:tmixtdis}
		\frac{\tdis}{3}\leq \tmix \leq C d\paren[\Big]{ 1+\frac{\norm{U}_{\osc}}{\kappa}-\ln(\kappa\tdis)}\tdis\,.
	\end{equation}
\end{proposition}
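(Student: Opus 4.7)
I would handle the two bounds separately; the lower bound is a standard Markov-semigroup calculation and the upper bound is the main content.

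For the lower bound $\tdis/3 \le \tmix$, let $k_t(x,y) = \rho(0,x;t,y)/\rho_\infty(y)$ denote the transition kernel against~$\mu$, and set $a_t = k_t - 1$. Invariance of $\mu$ under $X$ gives $\int k_t(x,y)\,\mu(dx) = \int k_t(x,y)\,\mu(dy) = 1$, so $a_t$ has mean zero in each argument, and the definition of $\tmix$ translates to $\sup_x \norm{a_T(x,\cdot)}_{L^1(\mu)} \le 1$ at $T = \tmix$. Writing $P_t$ for the associated semigroup on $\dot L^2(\mu)$, Cauchy--Schwarz gives $\norm{P_t f}_{L^2(\mu)}^2 \le \norm{f}_{L^2(\mu)}^2 \int \chi^2(x,t)\,\mu(dx)$ with $\chi^2(x,t) = \norm{a_t(x,\cdot)}_{L^2(\mu)}^2$. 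Iterating Chapman--Kolmogorov three times and exploiting the mean-zero properties of $a_T$, all mixed terms vanish except the triple product, leaving
\begin{equation*}
  a_{3T}(x,y) = \iint a_T(x,z_1)\,a_T(z_1,z_2)\,a_T(z_2,y)\,\mu(dz_1)\,\mu(dz_2).
\end{equation*}
A Cauchy--Schwarz on the inner integrals combined with the $L^1(\mu)$ bound on $a_T$ then gives $\int \chi^2(x,3T)\,\mu(dx) \le 1/4$, which in turn yields $\tdis \le 3\tmix$.

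The upper bound is obtained in three steps. First, I would prove the short-time ultracontractive estimate
\begin{equation*}
  k_\tau(x,y) \le \frac{C}{(\kappa\tau)^{d/2}}\exp\Big(\frac{\norm{U}_{\osc}}{\kappa}\Big)\qquad (0 < \tau \le 1),
\end{equation*}
with $C$ independent of $A$, $v$ and~$d$. The crucial point is that the drift $Av$ is $\mu$-divergence-free by~\eqref{e:measurePreserving}, and therefore contributes nothing to the $L^2(\mu)$ energy identity $\frac{d}{dt}\norm{\theta}_{L^2(\mu)}^2 = -2\kappa\norm{\grad\theta}_{L^2(\mu)}^2$. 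A Nash inequality on $\T^d$, combined with the two-sided comparison $e^{-\norm{U}_{\osc}/\kappa} \le Z\rho_\infty \le 1$ used to pass between $L^p(\mu)$ and $L^p(dx)$, then produces this bound. Second, I would iterate the dissipation estimate: the Chapman--Kolmogorov identity shows that $a_{\tau+s}(x,\cdot) = P_s^*[a_\tau(x,\cdot)]$, where $P_s^*$ is the $L^2(\mu)$-adjoint semigroup (whose dissipation time equals that of $P_s$), giving $\norm{a_{\tau+k\tdis}(x,\cdot)}_{L^2(\mu)} \le 2^{-k}\norm{a_\tau(x,\cdot)}_{L^2(\mu)}$. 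Third, I would convert back to total variation via $\norm{a_t}_{L^1(\mu)} \le \norm{a_t}_{L^2(\mu)}$ (valid since $\mu$ is a probability measure) and $\norm{a_\tau(x,\cdot)}_{L^2(\mu)} \le \norm{k_\tau(x,\cdot)}_{L^\infty}$, choosing $\tau = \tdis$ and letting $k$ be the smallest integer with $2^{-k}C(\kappa\tdis)^{-d/2}e^{\norm{U}_{\osc}/\kappa} \le 1$. A direct calculation gives $k = O\bigl(d\log\frac{1}{\kappa\tdis} + \frac{\norm{U}_{\osc}}{\kappa}\bigr)$, and $\tmix \le \tau + k\tdis$ then matches the claimed bound.

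The main obstacle is the short-time ultracontractive estimate. Although $Av$ is very large, its $\mu$-incompressibility removes it from the $L^2(\mu)$ energy identity, so only $\kappa$ and the weight $\rho_\infty$ enter the Nash-type bound. Running this iteration rigorously with dimension-independent constants (apart from the explicit $(\kappa\tau)^{-d/2}$ factor), while carefully tracking the exponential $e^{\norm{U}_{\osc}/\kappa}$ that arises from the $\mu$-versus-Lebesgue comparison, is the technical heart of the argument. Everything else---the dissipation iteration, the $L^1$--$L^2$ comparison, and the Markov-semigroup computation for the lower bound---is then routine.
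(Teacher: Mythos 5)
Your upper bound follows the paper's route: an ultracontractive ($L^1(\mu)\to L^\infty$) estimate for the ratio $\rho_t/\rho_\infty - 1$ with a drift-independent constant, iterated dissipation to gain factors of $1/2$, and an $L^1$--$L^2$ comparison. You correctly identify the key subtlety that $a_t(x,\cdot)$ evolves under the $L^2(\mu)$-adjoint semigroup (which is the backward generator with the convection sign flipped) and that its dissipation time agrees with that of the original process; this is precisely the content of the paper's Lemma~\ref{l:eqverify}. One quantitative slip: when the Nash iteration is run with respect to $\mu$ rather than Lebesgue, the $L^1(\mu)\to L^\infty$ constant picks up $\exp(O(d\norm{U}_\osc/\kappa))$, not $\exp(O(\norm{U}_\osc/\kappa))$ as you claim, because the $\mu$-versus-Lebesgue comparison appears at each of the $O(d)$ stages of the iteration (cf.\ the paper's $C_1(d)\le C2^d$ together with the $\exp(2d\norm{U}_\osc/\kappa)$ prefactor). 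This does not change the form of the final bound since the $d$ is already out front, but the tracking needs care.

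The lower bound is where there is a genuine gap. You pass from the operator-norm bound $\norm{P_{3T}f}_{L^2(\mu)}\le \norm{f}_{L^2(\mu)}\bigl(\int\chi^2(x,3T)\,d\mu\bigr)^{1/2}$ to the Hilbert--Schmidt quantity $\int\chi^2(x,3T)\,d\mu=\iint a_{3T}^2\,d\mu\,d\mu$, and then claim a Cauchy--Schwarz on the triple Chapman--Kolmogorov integral together with the bound $\sup_x\norm{a_T(x,\cdot)}_{L^1(\mu)}\le \tfrac12$ gives $\iint a_{3T}^2\,d\mu\,d\mu\le \tfrac14$. This cannot work: any Cauchy--Schwarz split of $a_{3T}=a_T\!*\!a_T\!*\!a_T$ against the product measure produces either $\norm{a_T(z,\cdot)}_{L^2(\mu)}^2$ (which is typically of size $(\kappa T)^{-d/2}$ and uncontrolled by the total-variation assumption) or $\norm{a_T(\cdot,y)}_{L^1(\mu)}$, the $L^1$ norm of the kernel in its \emph{first} argument, which is not controlled by $\tmix$ since the process is not reversible. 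The Hilbert--Schmidt norm of $a_{3T}$ is the $\ell^2$ sum of singular values and is simply not dominated by an operator-norm/TV bound. The paper sidesteps this by never introducing $\chi^2$: it applies Cauchy--Schwarz directly to $\theta_{3T}(x)=\int(\rho_{3T}(x,y)-\rho_\infty(y))f(y)\,dy$ with respect to the finite measure $\abs{\rho_{3T}(x,\cdot)-\rho_\infty}\,dy$, splitting $f=f\cdot 1$, to obtain $\abs{\theta_{3T}(x)}^2 \le \bigl(\int\abs{\rho_{3T}-\rho_\infty}f^2\,dy\bigr)\bigl(\int\abs{\rho_{3T}-\rho_\infty}\,dy\bigr)$. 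The second factor is $\le 2^{-3}$, and the first, when integrated against $\rho_\infty(x)\,dx$, is bounded by $2\norm{f}_{L^2(\mu)}^2$ using only $\int\rho_t(x,y)\rho_\infty(x)\,dx=\rho_\infty(y)$. That stationarity step is what keeps the argument at the level of the operator norm; your version loses it at the very first inequality.
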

\begin{remark}\label{r:rhsPositive}
The Poincar\'e Inequality (Lemma~\ref{l:poincare}, below), will guarantee that $\ln (\kappa \tdis) \leq \norm{U}_\osc / (2\kappa)$, and so the factor on the right of~\eqref{e:tmixtdis} is nonnegative.
\end{remark}

The reason for the large factor on the right of~\eqref{e:tmixtdis} is as follows.
Since the noise is regular, the density~$\rho(s, x; s+t, y)$  becomes~$L^2(\mu)$ for any~$t > 0$.
However, the~$L^2(\mu)$ norm is typically of order~$O(1/(\kappa t)^{d/2})$, which is large.
Waiting for a large multiple of~$\tdis$ will now make this small, which is what leads to the large factor on the right of~\eqref{e:tmixtdis}.

To carry out these details, we prove a stronger~$L^1(\mu) \to L^\infty$ bound on the transition density, with constants that are independent of~$v$.
The proof follows the structure of similar bounds on the torus with respect to the Lebesgue measure (see~\cite{ConstantinKiselevEA08}).
The key identity that allows us to make the proof work in our situation is that the ratio $\rho / \rho_\infty$ satisfies an equation that differs from the Kolmogorov backward equation by only a sign (see Lemma~\ref{l:eqverify}, below).

\begin{remark}\label{r:tdisTmixRd}
	The lower bound in~\eqref{e:tmixtdis} works in a general setting and, in particular, works when the state space is~$\R^d$.
	The upper bound need not hold in general as~$\tmix$ may be infinite.
\end{remark}

Of course, Proposition~\ref{p:tdisTmix} and the dissipation time bound~\eqref{e:TDisIntro} immediately imply the mixing time bound~\eqref{e:TMixIntro} stated in Theorem~\ref{t:FastConvIntro}.
We prove Theorem~\ref{t:FastConvIntro} in the next section, and postpone the proofs of Theorem~\ref{t:SinMix} and Proposition~\ref{p:tdisTmix} to Sections~\ref{s: mukapmix} and~\ref{s:tmixtdis} respectively.

\subsection{Proof of Theorem \ref{t:FastConvIntro}}\label{s:FastConvProof2}

The proof of Theorem~\ref{t:FastConvIntro} now follows by simplifying~\eqref{e:H} when~$h$ is given by~\eqref{e:expMixRate}, and using Proposition~\ref{p:tdisTmix}.
We carry out the details here.
\begin{proof}[Proof of Theorem~\ref{t:FastConvIntro}]
	By Theorem~\ref{t:fconv} we know $\tdis$ is bounded by~\eqref{e:tdis} where~$H(A)$ is defined by~\eqref{e:H}.
	In order to bound~$H(A)$, we make the following simple observation:
	If~$T$ satisfies
	\begin{equation}\label{e:T}
		T = a e^{- b \sqrt{T}} \,,
	\end{equation}
	for some constants~$a, b > 0$, then we must have
	\begin{equation}\label{e:TBound}
		T \leq \frac{1 +   \ln^2( a b^2 )}{b^2 }\,.
	\end{equation}
	To see this, note that if $T \leq 1/b^2$ there is nothing to prove.
	If~$T \geq 1/b^2$, then~\eqref{e:T} implies
	\begin{equation}
		T = \frac{1}{b^2} \ln^2 \paren[\Big]{ \frac{a}{T} }
			\leq \frac{\ln^2 (ab^2)}{b^2}\,,
	\end{equation}
	which proves~\eqref{e:TBound}.
	Choosing
	\begin{equation}
		T = \frac{1}{H(A)}\,,
		\quad
		a = \frac{2 D}{\kappa}\,,
		\quad\text{and}\quad
		b = \frac{ \gamma \sqrt{A} }{16 \sqrt{\norm{\grad v}_{L^\infty}} }
	\end{equation}
	and using~\eqref{e:TBound} in~\eqref{e:H} immediately implies~\eqref{e:TDisIntro} as desired.

	The mixing time bound~\eqref{e:TMixIntro} follows immediately from~\eqref{e:TDisIntro} and Proposition~\ref{p:tdisTmix}, concluding the proof.
\end{proof}

\section{Dissipation time bound (Theorem~\ref{t:fconv})}\label{s: mukapmix}

In this section we prove Theorem~\ref{t:fconv}.
The proof is entirely based on PDE techniques.
Indeed, the function~$\theta$ defined by~\eqref{e:ThetaDef} is the solution to the Kolmogorov backward equation
\begin{equation}\label{e:kBackward}
	\partial_t \theta
		= A v' \cdot \grad \theta + \mathcal L_\kappa \theta\,,
	\quad
	\text{with initial data }
	\theta_s = f\,.
\end{equation}
Here~$v'$ is the time changed velocity field
\begin{equation}\label{e:vPrimeDef}
	v'_t = v_{At}\,,
\end{equation}
and~$\mathcal L_\kappa$, defined by
\begin{equation}\label{e:lkappa}
	\mathcal L_\kappa f = - \grad U \cdot \grad f + \kappa \lap f \,,
\end{equation} 
is the generator of~\eqref{e:langevin}.

The main idea behind the proof is to split the analysis into two cases.
When~$\theta$ has most of its energy in large frequencies, the operator~$\mathcal L_\kappa$ will provide a strong damping effect, and~$\norm{\theta}_{L^2(\mu)}$ decays fast.
On the other hand, when~$\theta$ has most of its energy in small frequencies, the mixing caused by the convection term $v' \cdot \grad \theta$ generate high frequencies, which in turn forces~$\norm{\theta}_{L^2(\mu)}$ to decay fast.

To carry out the details we need a few spectral properties of the operator~$\mathcal L_\kappa$, which we collect here for easy reference.

\begin{lemma}\label{l:spectral}
	\begin{enumerate}[(1)]\reqnomode
	  \item The operator $-\mathcal L_\kappa$ is self-adjoint and nonnegative with respect to the inner-product~$\ip{\cdot, \cdot}_\mu$.
		\item For all $f, g \in H^1(\T^d, \mu)$ we have
			\begin{align}\label{e:H1norm}
				-\ip{ \mathcal{L}_\kappa f, g }_\mu=\kappa \ip{ \nabla f, \nabla g}_\mu\,.
			\end{align}
		\item
			The spectrum of~$-\mathcal L_\kappa$ is discrete, and the smallest eigenvalue on~$\dot L^2(\mu)$ is strictly positive.
			Moreover, if~$0 < \lambda_0 \leq \lambda_1 \leq \lambda_2 \dots$ are the eigenvalues of~$-\mathcal L_\kappa$ in ascending order, then
			\begin{equation}\label{e:Weyl}
				\lambda_n \xrightarrow{n \to \infty} \infty
				\qquad\text{and}\qquad
				\frac{\lambda_{n+1}}{\lambda_n} \xrightarrow{n \to \infty} 1\,,
			\end{equation}
	\end{enumerate}
\end{lemma}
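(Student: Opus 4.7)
The proof plan hinges on rewriting $\mathcal{L}_\kappa$ in divergence form with respect to the invariant density. Since $\rho_\infty = e^{-U/\kappa}/Z$ satisfies $\nabla \log \rho_\infty = -\nabla U/\kappa$, a direct computation shows
\begin{equation*}
    \mathcal{L}_\kappa f = \frac{\kappa}{\rho_\infty} \nabla \cdot (\rho_\infty \nabla f)\,.
\end{equation*}
With this identity in hand, parts (1) and (2) follow immediately from integration by parts on the torus: for $f, g \in H^1(\mathbb{T}^d, \mu)$,
\begin{equation*}
    \ip{\mathcal{L}_\kappa f, g}_\mu = \kappa \int_{\mathbb{T}^d} g \, \nabla \cdot (\rho_\infty \nabla f) \, dx = -\kappa \int_{\mathbb{T}^d} \rho_\infty \nabla f \cdot \nabla g \, dx = -\kappa \ip{\nabla f, \nabla g}_\mu\,,
\end{equation*}
which is symmetric in $(f, g)$ (giving self-adjointness) and nonnegative when $g = f$ (giving nonnegativity).

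For part (3), I would argue as follows. The identity above identifies $-\mathcal{L}_\kappa$ (with appropriate domain) as the Friedrichs extension of the closable, symmetric, nonnegative quadratic form $\mathcal{E}(f, g) = \kappa \ip{\nabla f, \nabla g}_\mu$ on $L^2(\mu)$. Since $U \in C^2(\mathbb{T}^d)$ on the compact torus, $\rho_\infty$ is bounded above and below by positive constants, so $L^2(\mu)$ coincides with $L^2(\mathbb{T}^d)$ as a set with equivalent norms, and similarly for $H^1(\mu)$ and $H^1(\mathbb{T}^d)$. The Rellich--Kondrachov theorem then gives a compact embedding $H^1(\mu) \hookrightarrow L^2(\mu)$, which implies that $-\mathcal{L}_\kappa$ has compact resolvent, hence a purely discrete spectrum consisting of eigenvalues of finite multiplicity accumulating only at infinity.

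To see that the lowest eigenvalue on $\dot L^2(\mu)$ is strictly positive, I invoke the Poincaré inequality on the weighted space (stated later in the paper as Lemma~\ref{l:poincare}): for $f \in \dot L^2(\mu)$,
\begin{equation*}
    \norm{f}_{L^2(\mu)}^2 \leq C(\kappa) \, \ip{\nabla f, \nabla f}_\mu = \frac{C(\kappa)}{\kappa} \ip{-\mathcal{L}_\kappa f, f}_\mu\,,
\end{equation*}
so $\lambda_0 \geq \kappa / C(\kappa) > 0$ by the variational characterization.

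Finally, for the asymptotic ratio $\lambda_{n+1}/\lambda_n \to 1$, my plan is to invoke Weyl's law for second order uniformly elliptic operators on a compact Riemannian manifold, which gives $\lambda_n \sim c_{d, \kappa} n^{2/d}$ as $n \to \infty$. The easiest route is the min--max characterization combined with the comparison $c_1 \mathcal{E}_0(f, f) \leq \mathcal{E}(f, f) \leq c_2 \mathcal{E}_0(f, f)$, where $\mathcal{E}_0$ is the Dirichlet form for the unweighted Laplacian, together with the equivalence of norms $c_1' \norm{f}_{L^2}^2 \leq \norm{f}_{L^2(\mu)}^2 \leq c_2' \norm{f}_{L^2}^2$; these bounds sandwich $\lambda_n$ between constant multiples of the eigenvalues of $-\kappa \Delta$, which are known explicitly, and in particular give $\lambda_n \to \infty$ with $\lambda_{n+1}/\lambda_n \to 1$. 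The only mildly subtle point is the Weyl-type asymptotic ratio itself rather than just the growth $\lambda_n \to \infty$; I expect this to be the main obstacle if one wants a self-contained argument, but it is standard and can be cited from any text on spectral theory of elliptic operators on compact manifolds.
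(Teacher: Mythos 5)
Your treatment of parts (1) and (2) coincides with the paper's: both rewrite $-\mathcal L_\kappa$ in divergence form as $-\kappa e^{U/\kappa}\nabla\cdot(e^{-U/\kappa}\nabla)$ and integrate by parts against $d\mu$. For part (3) the routes diverge. The paper conjugates $\mathcal L_\kappa$ by the isometry $\mathcal U f = Z^{-1/2}e^{-U/2\kappa}f$ into the Schr\"odinger operator $-\kappa\Delta + \tfrac14|\nabla U|^2 - \tfrac12\Delta U$ on $L^2(\T^d)$ and reads off both discreteness and the asymptotics~\eqref{e:Weyl} from Weyl's law for the counting function of that operator. Your route to discreteness --- equivalence of $L^2(\mu)$ and $H^1(\mu)$ with their unweighted counterparts, Rellich--Kondrachov, compact resolvent of the Friedrichs extension --- is equally valid and arguably more elementary, and your use of the Poincar\'e inequality for $\lambda_0>0$ matches the paper's Lemma~\ref{l:poincare}.

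The one point to flag is that the ``easiest route'' you offer for $\lambda_{n+1}/\lambda_n\to 1$ does not work as stated. Sandwiching the quadratic forms and the $L^2$ norms gives, via min--max, only $C_1\mu_n\le\lambda_n\le C_2\mu_n$ where $\mu_n$ are the eigenvalues of $-\kappa\Delta$ and in general $C_1\neq C_2$; this yields $\lambda_n\to\infty$ but only $\limsup_n\lambda_{n+1}/\lambda_n\le C_2\mu_{n+1}/(C_1\mu_n)\to C_2/C_1$, which need not be $1$. You do acknowledge this and fall back on citing the full Weyl asymptotic $\lambda_n\sim c\,n^{2/d}$, which is the correct move --- but to invoke the standard Weyl law one should first remove the first-order drift $\nabla U\cdot\nabla$ (the operator is not self-adjoint for the Lebesgue measure), and that is precisely what the paper's unitary conjugation to a Schr\"odinger operator accomplishes. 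With that conjugation supplied, your argument closes and is otherwise sound.
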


Lemma~\ref{l:spectral} directly follows from Weyl's law~\cite{HuangSogge21}, and is presented later in this section.
We now state two lemmas which show fast energy decay both when~$\theta$ has mainly high frequencies, and when it does not.

\begin{lemma}\label{l:h1Large}
	Solutions to~\eqref{e:kBackward} satisfy the energy inequality
	\begin{equation}\label{e:EE}
		\partial_t\norm{\theta}_{L^2(\mu)}^2=-2\ip{\cL_\kappa \theta, \theta}_{\mu}=-2\kappa\norm{\nabla \theta}_{L^2(\mu)}^2\,.
	\end{equation}
  Consequently, if for all $t \in [0, T]$ we have
	\begin{equation*}
		\kappa \norm{\grad \theta_t}_{L^2(\mu)}^2 \geq \lambda \norm{\theta_t}_{L^2(\mu)}^2\,,
	\end{equation*}
	for some constant~$\lambda > 0$.
	Then for all $t \in [0, T]$ we must have
	\begin{equation}\label{e:h1EE}
		\norm{\theta_t}_{L^2(\mu)}^2 \leq
			\exp\paren[\big]{ -2 \lambda t } \norm{\theta_0}_{L^2(\mu)}\,.
	\end{equation}
\end{lemma}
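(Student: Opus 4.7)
The plan is to time-differentiate $\norm{\theta}_{L^2(\mu)}^2$ directly using the Kolmogorov backward equation~\eqref{e:kBackward}, and to verify that the transport term contributes nothing while the generator term produces exactly the expected Dirichlet form. A short Gr\"onwall argument then yields the stated decay.

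First I would compute
\[
\partial_t \norm{\theta}_{L^2(\mu)}^2 = 2 \ip{\partial_t \theta, \theta}_\mu = 2A \ip{v' \cdot \nabla \theta, \theta}_\mu + 2 \ip{\mathcal{L}_\kappa \theta, \theta}_\mu.
\]
For the transport contribution, I use $2(v' \cdot \nabla \theta)\,\theta = v' \cdot \nabla(\theta^2)$ and integrate by parts against the density $p$:
\[
2 \ip{v' \cdot \nabla \theta, \theta}_\mu = \frac{1}{Z}\int_{\T^d} v' \cdot \nabla(\theta^2)\, p\, dx = -\frac{1}{Z}\int_{\T^d} \theta^2 \, \dv(p v') \, dx.
\]
As observed right after~\eqref{e:measurePreserving}, the measure preservation condition is equivalent to $\dv(pv) = 0$; this is unaffected by the time rescaling~\eqref{e:vPrimeDef}, so $\dv(pv') = 0$ and the transport term vanishes. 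The remaining term is handled by Lemma~\ref{l:spectral}(2), which gives $-\ip{\mathcal{L}_\kappa \theta, \theta}_\mu = \kappa \norm{\nabla \theta}_{L^2(\mu)}^2$. Combining these yields~\eqref{e:EE}.

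For the second assertion, the hypothesis and~\eqref{e:EE} give
\[
\partial_t \norm{\theta_t}_{L^2(\mu)}^2 = -2\kappa \norm{\nabla \theta_t}_{L^2(\mu)}^2 \leq -2 \lambda \norm{\theta_t}_{L^2(\mu)}^2
\]
for $t \in [0,T]$, and Gr\"onwall's inequality immediately produces~\eqref{e:h1EE}. There is no real obstacle here; the only point that requires any care is the equivalence between~\eqref{e:measurePreserving} and $\dv(pv) = 0$, which was already recorded in the setup, and the $C^2$ regularity of $U$ together with the smoothness of $v$ makes the integration by parts on the torus completely routine.
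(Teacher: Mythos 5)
Your proof is correct and follows the same approach as the paper: multiply~\eqref{e:kBackward} by~$\theta$, integrate by parts using~\eqref{e:measurePreserving} to kill the transport term and~\eqref{e:H1norm} (Lemma~\ref{l:spectral}(2)) to identify the Dirichlet form, then apply Gr\"onwall. The paper's own proof is a two-line sketch of exactly this argument, so you have simply filled in the routine details.
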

\begin{lemma}\label{l:H1small}
	Let $\lambda_N$ be the largest eigenvalue of~$\mathcal L_\kappa$ such that $\lambda_N\leq  H(A)$, where we recall $H(A)$ is defined in~\eqref{e:H}.
	If 
	\begin{align}\label{e:H1small}
		\kappa \norm{\nabla\theta_s}_{L^2(\mu)}^2\leq \lambda_N \norm{\theta_s}_{L^2(\mu)}^2\,,
	\end{align}
	then
	\begin{align}\label{e:decayaftert0}
		\norm{\theta_{s + t_0}}_{L^2(\mu)}^2\leq \exp\paren[\Big]{ -\frac{ H(A) t_0}{8} }\norm{\theta_s}_{L^2(\mu)}^2\,,
	\end{align}
	where $t_0$ is given by
	\begin{equation}\label{e:t0choice}
		t_0\defeq \frac{2}{A}\,h^{-1}\paren[\Big]{ \frac{\kappa}{4\lambda_N} } \,,
	\end{equation}
	where $h(t)$ is defined in~\eqref{e:expMixRate}.
\end{lemma}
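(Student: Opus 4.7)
The plan is to combine the mixing rate of the rescaled flow with a spectral splitting into low and high frequency modes of~$\mathcal L_\kappa$, in the spirit of the Lebesgue-measure argument in~\cite{FengIyer19}. Without loss of generality take $s=0$ and $\theta_0 \in \dot L^2(\mu)$, since the $\mu$-mean of $\theta$ is preserved by both $Av'\cdot\nabla$ and $\mathcal L_\kappa$. Let $P$ denote the orthogonal projection in $L^2(\mu)$ onto the span of eigenfunctions of~$-\mathcal L_\kappa$ with eigenvalue at most $\lambda_N$, and set $Q = I-P$. By Lemma~\ref{l:spectral}, $P$ and $Q$ commute with $\mathcal L_\kappa$, their ranges are $L^2(\mu)$-orthogonal, and the identity~\eqref{e:H1norm} gives $\kappa\norm{\nabla u}_{L^2(\mu)}^2 = \kappa\norm{\nabla Pu}_{L^2(\mu)}^2 + \kappa\norm{\nabla Qu}_{L^2(\mu)}^2$ with $\kappa\norm{\nabla Qu}_{L^2(\mu)}^2 \geq \lambda_{N+1}\norm{Qu}_{L^2(\mu)}^2 > H(A)\norm{Qu}_{L^2(\mu)}^2$. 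Substituting into the energy identity~\eqref{e:EE} yields the differential inequality
\begin{equation*}
\partial_t \norm{\theta_t}_{L^2(\mu)}^2 \leq -2H(A)\bigl(\norm{\theta_t}_{L^2(\mu)}^2 - \norm{P\theta_t}_{L^2(\mu)}^2\bigr).
\end{equation*}

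The heart of the matter is bounding $\norm{P\theta_t}_{L^2(\mu)}$ using mixing. I would compare $\theta_t$ to the pure transport $\tilde\theta_t$ solving $\partial_t\tilde\theta = Av'_t\cdot\nabla\tilde\theta$ with $\tilde\theta_0 = \theta_0$; since $Av'_t$ generates the rescaled flow $\Psi_{0,t}=\Phi_{0,At}$ of $v$, the mixing bound~\eqref{e:mixingratedef} gives $|\ip{\tilde\theta_t, \phi}_\mu| \leq h(At)\norm{\theta_0}_{\dot H^1(\mu)}\norm{\phi}_{\dot H^1(\mu)}$ for any $\phi \in \dot H^1(\mu)$. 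The remainder $\eta_t = \theta_t - \tilde\theta_t$ satisfies $\partial_t\eta = Av'_t\cdot\nabla\eta + \mathcal L_\kappa\eta + \mathcal L_\kappa\tilde\theta$ with $\eta_0=0$; testing against $\eta$, using $\mu$-preservation of the drift to kill $\ip{Av'\cdot\nabla\eta, \eta}_\mu$, and Young's inequality, give the energy bound $\norm{\eta_t}_{L^2(\mu)}^2 \leq \kappa\int_0^t \norm{\nabla\tilde\theta_\tau}_{L^2(\mu)}^2 \, d\tau$. The chain rule together with the Gronwall bound $\norm{D\Psi_{0,\tau}}_{L^\infty} \leq e^{A\tau\norm{\nabla v}_{L^\infty}}$ then controls $\nabla\tilde\theta_\tau$ in terms of $\nabla\theta_0$. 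For an arbitrary unit $\phi \in \mathrm{range}(P)$, the decomposition $\ip{P\theta_t, \phi}_\mu = \ip{\tilde\theta_t,\phi}_\mu + \ip{\eta_t,\phi}_\mu$ combined with these bounds, the hypothesis $\kappa\norm{\nabla\theta_0}^2 \leq \lambda_N\norm{\theta_0}^2$, and the inverse Sobolev estimate $\kappa\norm{\nabla\phi}^2 \leq \lambda_N\norm{\phi}^2$ on $\mathrm{range}(P)$ yields a quantitative bound on $\norm{P\theta_t}_{L^2(\mu)}$. The choice of $t_0$ in~\eqref{e:t0choice} is designed precisely to make $h(At_0/2)\,\lambda_N/\kappa = 1/4$, and the lower bound~\eqref{e:A0Def} on $A_0$ is tuned so that the transport-remainder term is correspondingly small; together they produce $\norm{P\theta_t}_{L^2(\mu)}^2 \leq \tfrac{1}{4}\norm{\theta_0}_{L^2(\mu)}^2$ uniformly on a suitable subinterval of $[0,t_0]$.

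Inserting this uniform control on $\norm{P\theta_t}$ back into the differential inequality from the first paragraph and applying Gronwall delivers the announced decay $\norm{\theta_{t_0}}_{L^2(\mu)}^2 \leq \exp(-H(A)t_0/8)\norm{\theta_0}_{L^2(\mu)}^2$. The principal obstacle is balancing two competing effects in the bound on $\norm{\eta_t}$: the exponential Gronwall growth $e^{A\tau\norm{\nabla v}_{L^\infty}}$ coming from differentiating the flow map, against the decay $h(A\tau)$ coming from mixing. The transcendental equation~\eqref{e:H} defining $H(A)$, and the lower bound~\eqref{e:A0Def} on $A_0$, are tailored precisely to this balancing act, quantifying how large $A$ must be so that the mixing has enough time to reduce the low-frequency content of $\theta$ before the Gronwall growth takes over.
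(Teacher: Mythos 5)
Your overall architecture (low/high eigenmode splitting of $\mathcal L_\kappa$, comparison with the pure transport solution, mixing applied to the low modes, Gr\"onwall at the end) matches the paper's, and your observations that $\lambda_{N+1} > H(A)$ and that $t_0$ is chosen so that $h(At_0/2)\lambda_N/\kappa = 1/4$ are both correct. But there is a genuine gap at the crux of the lemma: your bound on the remainder $\eta = \theta - \tilde\theta$. You estimate $\norm{\eta_t}_{L^2(\mu)}^2 \lesssim \kappa\int_0^t \norm{\grad\tilde\theta_\tau}_{L^2(\mu)}^2\,d\tau$ and then control $\grad\tilde\theta_\tau$ by the Gr\"onwall growth $e^{A\tau\norm{\grad v}_{L^\infty}}$ of the flow derivative. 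Over the relevant time $t_0 = \tfrac{2}{A}h^{-1}(\kappa/4\lambda_N)$ this produces the factor $e^{2At_0\norm{\grad v}_{L^\infty}} = e^{\frac14\sqrt{A\norm{\grad v}_{L^\infty}/H(A)}}$ (using the definition~\eqref{e:H} of $H$ with $\lambda_N\approx H(A)$), so your remainder bound is of order $\tfrac{H(A)}{A\norm{\grad v}_{L^\infty}}\,e^{\frac14\sqrt{A\norm{\grad v}_{L^\infty}/H(A)}}\norm{\theta_0}_{L^2(\mu)}^2$, which \emph{diverges} as $A\to\infty$ (since $A/H(A)\to\infty$). So the remainder is not "correspondingly small"; taking $A\geq A_0$ makes it worse, not better, and~\eqref{e:A0Def} is not tuned for this purpose (it only enforces the Weyl-law gap $\lambda_N\geq H(A)/2$ and $At_0\norm{\grad v}_{L^\infty}\gg 1$). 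This is the standard obstruction in this line of argument: the exponential stretching of the transport solution's gradient always beats the mixing decay if you estimate the perturbation this way.

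The paper circumvents this by never differentiating the transport solution. In Lemma~\ref{l:diff} one writes $\partial_t\norm{\theta-\phi}_{L^2(\mu)}^2 = 2\ip{\mathcal L_\kappa\theta,\theta-\phi}_\mu \leq 2\norm{\mathcal L_\kappa\theta}_{L^2(\mu)}\norm{\theta_0}_{L^2(\mu)}$ and then controls $\int_0^t\norm{\mathcal L_\kappa\theta}_{L^2(\mu)}^2$ by an $H^1$ energy identity for $\theta$ itself, yielding a remainder bound in terms of $A\norm{\grad v}_{L^\infty}\int_0^t\norm{\grad\theta}_{L^2(\mu)}^2\,ds$ --- no exponential in $At$. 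The argument is then run by contradiction: one assumes $\kappa\int_0^{t_0}\norm{\grad\theta}_{L^2(\mu)}^2\,ds < \tfrac{\lambda_N t_0}{8}\norm{\theta_0}_{L^2(\mu)}^2$, which makes the remainder small, so that mixing forces $(I-P_N)\theta$ to carry most of the energy on $[t_0/2,t_0]$, which forces the integrated Dirichlet energy to be large --- a contradiction; the resulting lower bound on $\kappa\int_0^{t_0}\norm{\grad\theta}_{L^2(\mu)}^2$ then gives~\eqref{e:decayaftert0} directly from~\eqref{e:EE}. To repair your proof you would need to replace the Gr\"onwall remainder estimate by this (or an equivalent) self-referential one; as written, your version of the key estimate fails for the paper's choices of $t_0$, $H(A)$ and $A_0$, and with it the scaling~\eqref{e:TDisIntro}.
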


Since the proof of Lemma~\ref{l:h1Large} is short, we present it first.

\begin{proof}[Proof of Lemma~\ref{l:h1Large}]
	Equation~\eqref{e:EE} follows by multiplying~\eqref{e:kBackward} by~$\theta$, integrating by parts, and using~\eqref{e:measurePreserving} and~\eqref{e:H1norm}.
	Equation~\eqref{e:h1EE} follows immediately from~\eqref{e:EE} and Gr\"{o}nwall's lemma.
\end{proof}

The proof of Lemma~\ref{l:H1small} is more involved and relies on the mixing properties of~$v$.
The main idea is that when the spectrum of~$\theta_s$ is concentrated in low frequencies, then it is close to the solution of the transport equation,
\[
	\partial_t \phi = A  v' \cdot \nabla \phi\,, \quad \phi(0)=\theta_0\,.
\]
The mixing assumption on~$v$ guarantees that the transport equation moves energy to high frequencies.
These high frequencies are then dissipated faster by~$\mathcal L_\kappa$, leading to the faster decay stated in Lemma~\ref{l:H1small}.
Postponing the proof of Lemma~\ref{l:H1small} to Section~\ref{s:lowFreq}, we now prove Theorem~\ref{t:fconv}.

\begin{proof}[Proof of Theorem~\ref{t:fconv}]
	We claim that any solution of~\eqref{e:kBackward} with~$\theta_0 \in \dot L^2(\mu)$ satisfies
	\begin{align}\label{e:globaldecay}
		\norm{\theta_{s + t}}_{L^2(\mu)}
			\leq \exp \paren[\Big]{ - \frac{H(A)t}{16} + 1}\norm{\theta_s}_{L^2(\mu)}\,,
	\end{align}
	which immediately implies~\eqref{e:tdis}.

	To prove~\eqref{e:globaldecay}, we may without loss of generality assume~$s = 0$.
	Choose $\lambda_N$ as in Lemma~\ref{l:H1small}, choose $\lambda = \lambda_N$, and repeatedly apply Lemmas~\ref{l:h1Large} and~\ref{l:H1small} to obtain an increasing sequence of times $t_k' \to \infty$ such that $t_{k+1}'-t_{k}'\leq t_0$ and
	\begin{equation}\label{e:2022-10-10-1}
		\norm{\theta_{t_{k+1}'}}_{L^2(\mu)}^2\leq \exp\Bigl(-(t_{k+1}' - t_k')\min\set[\Big]{ \lambda_N, \frac{ H(A)}{8} }\Bigr)\norm{\theta_{t_k'}}_{L^2(\mu)}^2\,,
	\end{equation}
	By Lemma~\ref{l:spectral} there exists~$\Lambda = \Lambda(\kappa)$ such that
	\begin{equation}
		\lambda_{n+1} \leq 8 \lambda_{n}
		\quad \text{whenever } \lambda_{n+1} \geq \Lambda\,.
	\end{equation}
	Let~$A_0$ be defined by~\eqref{e:A0Def}, and note that for~$A \geq A_0$ we have $H(A) \geq \Lambda$.
	This implies
	\begin{equation*}
		\frac{H(A) }{8} \leq \lambda_N \leq H(A)\,,
	\end{equation*}
	and hence~\eqref{e:2022-10-10-1} implies
	\begin{equation}\label{e:2022-10-10-2}
		\norm{\theta_{t_{k+1}'}}_{L^2(\mu)}^2\leq \exp\Bigl(-(t_{k+1}' - t_k') \frac{ H(A)}{8} \Bigr)\norm{\theta_{t_k'}}_{L^2(\mu)}^2\,.
	\end{equation}

	By construction of $t_k'$, for any $t \geq 0$ there exists $k \in \N$ such that $t - t_0 \leq t_k' \leq t$.
	Iterating~\eqref{e:2022-10-10-2} shows
	\begin{equation}\label{e:2022-10-10-3}
		\norm{\theta_t}_{L^2(\mu)}\leq
			\exp\Bigl(-
				\frac{ H(A) t_k'}{16}\Bigr)
			\norm{\theta_0}_{L^2(\mu)}
		\leq \exp\Big(-
		\frac{ H(A) (t-t_0)}{16}\Big)\norm{\theta_0}_{L^2(\mu)}\,.
	\end{equation}
	By choice of $t_0$ and $\lambda_N$, we note
	\begin{align*}
		H(A) t_0 &=\frac{2}{A}h^{-1}\Big(\frac{\kappa}{4\lambda_N}\Big) H(A)\leq \frac{ H(A)}{8\sqrt{A \lambda_N\norm{\nabla v}_{L^\infty}}}
		\\
		&\leq \paren[\Big]{ \frac{ H(A)}{8 A\norm{\nabla v}_{L^\infty}}}^{1/2}
		= \frac{1}{32 \sqrt{2}\norm{\nabla v}_{L^\infty} h^{-1}(\frac{\kappa}{4H(A)}) }
		\leq 1\,.
	\end{align*}
	Using this in~\eqref{e:2022-10-10-3} implies~\eqref{e:globaldecay} as desired.
\end{proof}

The remainder of this section is devoted to proving Lemmas~\ref{l:spectral} and~\ref{l:H1small}.

\subsection{Spectral bounds on \texorpdfstring{$\mathcal L_\kappa$}{Lk} (Lemma~\ref{l:spectral}).}

The operator~$\mathcal L_\kappa$ can be conjugated to a Schr\"odinger operator and well-known results spectral results (e.g.~\cite{HuangSogge21}) for Schr\"odinger operators will imply Lemma~\ref{l:spectral}.
\begin{proof}[Proof of Lemma~\ref{l:spectral}]
	The first two assertions of Lemma~\ref{l:spectral} are direct computations.
	Indeed,
	\begin{equation*}
		-\mathcal{L}_\kappa
			=-\kappa\Delta + \nabla U\cdot \nabla
			=- \kappa e^{U/\kappa}\nabla\cdot(e^{-U/\kappa}\nabla) \,,
	\end{equation*}
	and hence for all $f, g\in H^1(\T^d,\mu)$, we have
	\begin{align*}
		-\langle \mathcal{L}_\kappa f, g\rangle_\mu
			&=-\kappa\int_{\T^d}  e^{U/\kappa}\nabla\cdot(e^{-U/\kappa}\nabla f) g\, \rho_\infty \, dx
			= \frac{-\kappa}{Z} \int_{\T^d}  \nabla\cdot(e^{-U/\kappa}\nabla f) g\, dx
		\\
			&= \frac{\kappa}{Z} \int e^{-U/\kappa}\nabla f\cdot \nabla g\,dx
			=\kappa\langle \nabla f, \nabla g\rangle_{\mu}\,.
	\end{align*}
	This immediately implies the first two assertions.

	To study the spectrum, let $L^2 = L^2(\T^d)$ denote the space of all square-integrable functions with respect to the Lebesgue measure, and~$\ip{\cdot, \cdot}$ the associated inner-product.
	Define the operator~$\mathcal U \colon L^2(\mu) \to L^2$ by
	\begin{equation*}
		\mathcal U f = \frac{1}{\sqrt{Z}} e^{-U / 2\kappa} f\,.
	\end{equation*}
	Clearly $\ip{f, g}_\mu = \ip{f, g}$, and so $\mathcal U$ is an isometry.
	Define the operator~$\mathcal H \colon L^2 \to L^2$ by $\mathcal H \defeq \mathcal U \mathcal L_\kappa \mathcal U^{-1}$.
	We compute
	\begin{equation*}
		-\cH f =-\kappa\Delta f +\paren[\Big]{ \frac{1}{4} |\nabla U|^2-\frac{1}{2}\Delta U} f \,.
	\end{equation*}
	Thus~$\mathcal L_\kappa$ is unitarily equivalent to the operator~$\mathcal H$, and hence the operators~$\mathcal L$ and~$\mathcal U$ have the same spectrum.

	The operator~$\mathcal H$ is a Schr\"odinger operator and has been extensively studied.
	In particular, the eigenvalues of $\cH$ satisfy Weyl's law~\cite{HuangSogge21} (see also \cite[Theorem VI]{Ray54}), 
	which states that
	\begin{align}
		\label{e:Nlambda}
		N(\lambda)  &\defeq \sum_{\lambda_n < \lambda} 1
			= \frac{\omega_d}{(2\pi)^d} \paren[\Big]{ \frac{\lambda}{\kappa} }^{\frac{d}{2}}
				+ O\paren[\Big]{ \frac{\lambda}{\kappa}  }^{\frac{d-1}{2}}\,,
	\end{align}
	asymptotically, as $\lambda \to \infty$.
	Here $\omega_d$ is the volume of the unit ball in $\R^d$.
	This immediately implies the third assertion in Lemma~\ref{l:spectral}, finishing the proof.
\end{proof}

\subsection{Low frequency energy decay (Lemma~\ref{l:H1small})}\label{s:lowFreq}

To prove Lemma~\ref{l:H1small}, we will first show (Lemma~\ref{l:diff}, below) that when~\eqref{e:H1small} holds, $\theta$ is sufficiently close to solutions to the transport equation~\eqref{e:drift}.
By the mixing assumption on~$v$ this will move energy to high frequencies, which will then be dissipated faster by the diffusion operator~$\mathcal L_\kappa$.

\begin{lemma}\label{l:diff}
	Let $\theta$ be the solution of~\eqref{e:kBackward} with initial data~$\theta_0 \in \dot L^2(\mu)$, and let $\phi$ be a solution of the transport equation
\begin{equation}\label{e:drift}
	\partial_t \phi = A  v' \cdot \nabla \phi\,, \quad \phi(0)=\theta_0\,,
\end{equation}
	with the same initial data.
	For any $t\ge 0$ we have
	\begin{align}
		\|\theta-\phi\|_{L^2(\mu)}^2 \leq   \sqrt{2\kappa t}\norm{\theta_0}_{L^2(\mu)}\Big(2A\norm{\nabla v}_{L^\infty}\int_0^t\norm{\nabla\theta}_{L^2(\mu)}^2\,ds+\norm{\nabla\theta_0}_{L^2(\mu)}^2\Big)^{1/2}\,.
	\end{align}
\end{lemma}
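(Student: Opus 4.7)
The plan is to analyze the difference $w \defeq \theta - \phi$, which solves the inhomogeneous transport equation $\partial_t w = A v'\cdot \nabla w + \mathcal L_\kappa \theta$ with $w(0) = 0$. Writing $w$ via Duhamel's formula against the transport semigroup generated by $A v'$, and noting that~\eqref{e:measurePreserving} makes this semigroup an isometry on $L^2(\mu)$, Minkowski's inequality followed by Cauchy--Schwarz in time yields the Duhamel bound
\[
	\norm{w(t)}_{L^2(\mu)}^2 \le t\int_0^t \norm{\mathcal L_\kappa \theta(s)}_{L^2(\mu)}^2 \, ds\,.
\]

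The key step is to control $\int_0^t \norm{\mathcal L_\kappa \theta}_{L^2(\mu)}^2\,ds$ in terms of the right-hand side of the lemma. For this I would derive an energy identity for $\norm{\nabla\theta}_{L^2(\mu)}^2$ by differentiating in $t$ and using~\eqref{e:kBackward}. Three terms appear. The symmetric transport-of-$\abs{\nabla\theta}^2$ piece vanishes by~\eqref{e:measurePreserving}; the commutator $[\nabla, A v'\cdot\nabla]$ contributes $2A\sum_{i,j}\ip{(\partial_i v'_j)(\partial_j \theta), \partial_i \theta}_\mu$, bounded in absolute value by $2A\norm{\nabla v}_{L^\infty}\norm{\nabla\theta}_{L^2(\mu)}^2$; and the remaining term equals $-\tfrac{2}{\kappa}\norm{\mathcal L_\kappa \theta}_{L^2(\mu)}^2$ after two applications of~\eqref{e:H1norm} combined with the self-adjointness of $\mathcal L_\kappa$. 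Integrating from~$0$ to~$t$ and dropping the nonnegative $\norm{\nabla\theta(t)}^2$ term yields
\[
	\int_0^t \norm{\mathcal L_\kappa \theta}_{L^2(\mu)}^2 \, ds \le \frac{\kappa}{2}\,K\,,
\]
where $K$ denotes the parenthesized quantity on the right-hand side of the lemma.

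Combining the two displayed inequalities gives $\norm{w(t)}_{L^2(\mu)} \le \sqrt{\kappa t K/2}$. To extract the precise form claimed in the lemma I would interpolate this with the crude estimate $\norm{w}_{L^2(\mu)} \le \norm{\theta}_{L^2(\mu)} + \norm{\phi}_{L^2(\mu)} \le 2\norm{\theta_0}_{L^2(\mu)}$, which follows from the energy identity~\eqref{e:EE} for~$\theta$ together with exact $L^2(\mu)$-conservation of~$\phi$ under the measure-preserving transport. Writing $\norm{w}^2 = \norm{w}\cdot\norm{w}$ and inserting each bound into one factor gives
\[
	\norm{w}_{L^2(\mu)}^2 \le 2\norm{\theta_0}_{L^2(\mu)}\sqrt{\frac{\kappa t K}{2}} = \sqrt{2\kappa t}\,\norm{\theta_0}_{L^2(\mu)}\sqrt{K}\,,
\]
which is exactly the bound claimed.

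The main technical obstacle is the energy identity for $\norm{\nabla\theta}_{L^2(\mu)}^2$: one must verify that the $\mu$-weighted divergence of~$v'$ annihilates the transport-of-$\abs{\nabla\theta}^2$ term (this is where~\eqref{e:measurePreserving} is essential), and must correctly compute $\ip{\nabla \mathcal L_\kappa\theta,\nabla\theta}_\mu = -\tfrac{1}{\kappa}\norm{\mathcal L_\kappa\theta}_{L^2(\mu)}^2$ by swapping $\nabla$ for $\mathcal L_\kappa$ via~\eqref{e:H1norm} and then invoking self-adjointness. Everything else reduces to routine Cauchy--Schwarz combined with the already-established energy identity~\eqref{e:EE} for $\theta$.
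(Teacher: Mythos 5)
Your proposal is correct and reaches the stated bound with the exact constant, but the way you assemble the final estimate differs from the paper. The paper does not use Duhamel. Instead it differentiates $\norm{\theta-\phi}_{L^2(\mu)}^2$ directly, obtaining $\partial_t\norm{\theta-\phi}^2_{L^2(\mu)}=2\ip{\cL_\kappa\theta,\theta-\phi}_\mu$; then it drops the dissipative part $\ip{\cL_\kappa\theta,\theta}_\mu\le 0$, applies Cauchy--Schwarz to $-2\ip{\cL_\kappa\theta,\phi}_\mu\le 2\norm{\cL_\kappa\theta}_{L^2(\mu)}\norm{\theta_0}_{L^2(\mu)}$ (using exact $L^2(\mu)$-conservation of $\phi$), integrates in time and applies Cauchy--Schwarz again. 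This makes the factor $\norm{\theta_0}_{L^2(\mu)}$ appear naturally through $\norm{\phi}_{L^2(\mu)}$. Your route is to get the purely quadratic Duhamel bound $\norm{w}^2_{L^2(\mu)}\le t\int_0^t\norm{\cL_\kappa\theta}_{L^2(\mu)}^2\,ds$, which overshoots in the power of $\norm{\cL_\kappa\theta}$, and then repair it by interpolating with the crude estimate $\norm{w}_{L^2(\mu)}\le 2\norm{\theta_0}_{L^2(\mu)}$. Both approaches rest on the same central second-order energy identity
\[
	\frac{\kappa}{2}\partial_t\norm{\nabla\theta}_{L^2(\mu)}^2=-\norm{\cL_\kappa\theta}_{L^2(\mu)}^2-A\ip{v'\cdot\nabla\theta,\cL_\kappa\theta}_\mu\,,
\]
and on the identity $\ip{v'\cdot\nabla\theta,\cL_\kappa\theta}_\mu=\kappa\int(\nabla\theta\cdot\nabla)v'\cdot\nabla\theta\,d\mu$ (i.e.\ the vanishing of the transport-of-$\abs{\nabla\theta}^2$ piece by~\eqref{e:measurePreserving}), and both arrive at the constant $\sqrt{2\kappa t}$. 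Your version is slightly longer because it needs the extra interpolation trick to trade a power of $\cL_\kappa\theta$ for a power of $\theta_0$, but the derivation is sound and the ideas are equivalent in content.
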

\begin{proof}
	Multiplying~\eqref{e:kBackward} by $-\mathcal L_\kappa \theta$ and integrating over space gives
	\begin{align}\label{e:l1energy}
		\nonumber
		\frac{\kappa}{2}\partial_t \langle \nabla \theta, \nabla \theta\rangle_\mu
		&= - \frac{1}{2}\partial_t \langle\theta,\cL_\kappa \theta\rangle_\mu
		= - \langle\partial_t \theta, \cL_\kappa\theta\rangle_{\mu}
	\\
		&= - \|\cL_\kappa\theta\|_{L^2(\mu)}^2
			- A\langle v'\cdot \nabla \theta, \cL_\kappa\theta\rangle_{\mu}\,.
	\end{align}
	For the last term, we note
	\begin{align*}
			\langle v'\cdot \nabla \theta, \mathcal L_\kappa \theta\rangle_{\mu}
			&= \frac{\kappa}{Z}\int_{\T^d} v'\cdot\nabla \theta \nabla \cdot (e^{-U/\kappa}\nabla \theta) \,dx
	\\
			&=-\frac{\kappa}{Z}\int_{\T^d} \nabla (v'\cdot\nabla \theta)\cdot \nabla \theta e^{-U/\kappa}\,dx
	\\
			&=\frac{\kappa}{2Z}\int_{\T^d} v' e^{-U/\kappa}\cdot \nabla (|\nabla \theta|^2)\,dx
				+\kappa\int_{\T^d} (\nabla\theta \cdot \nabla) v' \cdot \nabla \theta\,d\mu\\
			&= \kappa\int_{\T^d} (\nabla\theta \cdot \nabla) v' \cdot \nabla \theta\,d\mu\,,
	\end{align*}
	since $v'$ satisfies~\eqref{e:measurePreserving}.
	Consequently,
	\begin{equation*}
		\abs[\big]{ \langle v'\cdot \nabla \theta, \mathcal L_\kappa \theta\rangle_{\mu} }
			\leq \kappa \int_{\T^d} \abs{\grad v'} \abs{\grad \theta}^2 \,d\mu
			\leq \kappa \norm{\grad v}_{L^\infty} \norm{\grad \theta}_{L^2(\mu)}^2 \,,
	\end{equation*}
	and substituting this in~\eqref{e:l1energy}, we have
	\begin{align}\label{e:l1energy2}
		\|\cL_\kappa\theta\|_{L^2(\mu)}^2\leq A\kappa\|\nabla v\|_{L^\infty}\|\nabla \theta\|_{L^2(\mu)}^2 -\frac{\kappa}{2}\partial_t \langle \nabla \theta, \nabla \theta\rangle_\mu \,.
	\end{align}

	On the other hand, $\norm{\theta - \phi}_{L^2(\mu)}$ satisfies
	\begin{align}\label{e:diff}
		\nonumber
		\partial_t \|\theta-\phi\|_{L^2(\mu)}^2
			&=2\langle \cL_\kappa\theta, \theta-\phi\rangle_\mu
			\leq -2\langle \cL_\kappa\theta, \phi\rangle_\mu
			\leq 2\|\cL_\kappa\theta\|_{L^2(\mu)}\|\phi\|_{L^2(\mu)}
		\\
		&\leq 2\|\cL_\kappa\theta\|_{L^2(\mu)}\|\theta_0\|_{L^2(\mu)}\,.
	\end{align}
	To obtain the last equality above we used the fact that $\norm{\phi_t}_{L^2(\mu)} = \norm{\phi_0}_{L^2(\mu)}$, which is true because~$v'$ satisfies~\eqref{e:measurePreserving}.
	Integrating~\eqref{e:diff} and applying~\eqref{e:l1energy2}, we have
	\begin{align*}
		\MoveEqLeft
		\|\theta-\phi\|_{L^2(\mu)}^2 \leq 2\norm{\theta_0}_{L^2(\mu)}\int_0^t\norm{\cL_\kappa\theta}_{L^2(\mu)}\,ds\\
		&\leq 2\sqrt{t}\norm{\theta_0}_{L^2(\mu)}\Big(\int_0^t A\kappa\|\nabla v\|_{L^\infty}\|\nabla \theta\|_{L^2(\mu)}^2 -\frac{\kappa}{2}\partial_t \langle \nabla \theta, \nabla \theta\rangle_\mu \,ds\Big)^{1/2}\\
		&\leq  \sqrt{2\kappa t}\norm{\theta_0}_{L^2(\mu)}\Big(2A\norm{\nabla v}_{L^\infty}\int_0^t\norm{\nabla\theta}_{L^2(\mu)}^2\,ds+\norm{\nabla\theta_0}_{L^2(\mu)}^2\Big)^{1/2}\,,
	\end{align*}
	concluding the proof.
\end{proof}

We now use Lemma~\ref{l:diff} to prove Lemma~\ref{l:H1small}.

\begin{proof}[Proof of Lemma~\ref{l:H1small}]
	For simplicity, and without loss of generality, we assume $s = 0$.
	We claim our choice of $\lambda_N$ and $t_0$ will guarantee 
	\begin{align}\label{e:intH1large}
		\kappa \int_0^{t_0}\norm{\nabla \theta}_{L^2(\mu)}^2\,ds \geq \frac{\lambda_N t_0}{8}\norm{\theta_0}_{L^2(\mu)}^2\,.
	\end{align}
	To prove this, assume, for sake of contradiction, that
	\begin{align}\label{e:converse}
		\kappa \int_0^{t_0}\norm{\nabla \theta}_{L^2(\mu)}^2\,ds < \frac{\lambda_N t_0}{8}\norm{\theta_0}_{L^2(\mu)}^2\,.
	\end{align}
	Let $P_N$ be the orthogonal projection from $\dot L^2(\mu)$ to the space spanned by the first~$N$ eigenfunctions $\{e_1,\dots, e_N\}$, recall~\eqref{e:H1norm} and notice
	\begin{align}\label{e:H1largetmp1}
		\MoveEqLeft
		\nonumber
		\int_0^{t_0}\norm{\nabla\theta}_{L^2(\mu)}^2\,ds\geq \frac{\lambda_N}{\kappa} \int_{\frac{t_0}{2}}^{t_0}\norm{(I-P_N)\theta}_{L^2(\mu)}^2\,ds\\
		\nonumber
		&\geq \frac{\lambda_N}{2\kappa}\int_{\frac{t_0}{2}}^{t_0}\norm{(I-P_N)\phi}_{L^2(\mu)}^2\,ds
		-\frac{\lambda_N}{\kappa}\int_{\frac{t_0}{2}}^{t_0}\norm{(I-P_N)(\theta-\phi)}_{L^2(\mu)}^2\,ds\\
		&\geq \frac{\lambda_N t_0}{4\kappa}\norm{\theta_0}_{L^2(\mu)}^2-\frac{\lambda_N}{2\kappa}\int_{\frac{t_0}{2}}^{t_0}\norm{P_N\phi}_{L^2(\mu)}^2\,ds
		-\frac{\lambda_N}{\kappa}\int_{\frac{t_0}{2}}^{t_0}\norm{\theta-\phi}_{L^2(\mu)}^2\,ds\,.
	\end{align}


	We will now bound each of the negative terms on the right of~\eqref{e:H1largetmp1}.
	For the second term in~\eqref{e:H1largetmp1}, we first note that the mixing rate of the rescaled velocity field $A v' = A v_{At}$ is $h(At)$.
	Thus, by~\eqref{e:mixingratedef},
	\begin{align}
		\norm{P_N \phi_s}_{L^2(\mu)}^2
			&= \ip{\phi_s, P_N \phi_s}_{L^2(\mu)}
			= \ip{\theta_0 \circ \Phi_{0, s}^{-1}, P_N \phi_s}_{L^2(\mu)}
		\\
			&\leq h( As ) \norm{\theta_0}_{\dot H^1(\mu)} \norm{ P_N \phi_s }_{\dot H^1(\mu)}
			\leq \sqrt{\lambda_N/\kappa} h(A s) \norm{\theta_0}_{\dot H^1(\mu)} \norm{\phi_s}_{L^2(\mu)}
		\\
			&= \sqrt{\lambda_N/\kappa} h(A s) \norm{\theta_0}_{L^2} \norm{\theta_0}_{\dot H^1(\mu)}\,.
	\end{align}
	Combining this with~\eqref{e:H1small} implies
	\begin{align}\label{e:H1largetmp2}
		\int_{\frac{t_0}{2}}^{t_0}\norm{P_N\phi}_{L^2(\mu)}^2\,ds
			&\leq \sqrt{\lambda_N/\kappa} \int_{\frac{t_0}{2}}^{t_0} h(As) \norm{\theta_0}_{\dot H^1(\mu)} \norm{\theta_0}_{L^2(\mu)} \,ds
		\\
			&\leq  \frac{\lambda_N t_0}{2 \kappa }h\paren[\Big]{ \frac{At_0}{2} } \norm{\theta_0}_{L^2(\mu)}^2\,.
	\end{align}

For the last term in~\eqref{e:H1largetmp1}, applying Lemma~\ref{l:diff} and inequality~\eqref{e:converse}, we note
	\begin{align*}
		\MoveEqLeft
		\int_{\frac{t_0}{2}}^{t_0}\norm{\theta-\phi}_{L^2(\mu)}^2\,ds\\
		&\leq \int_{\frac{t_0}{2}}^{t_0}\sqrt{2\kappa\tau}\norm{\theta_0}_{L^2(\mu)}
			\Big(2A\norm{\nabla v}_{L^\infty}\int_0^\tau\norm{\nabla \theta}_{L^2(\mu)}^2\,ds+\norm{\nabla \theta_0}_{L^2(\mu)}^2\Big)^{1/2}\,d\tau\\
		&\leq \frac{\sqrt{2\kappa}t_0^{3/2}\norm{\theta_0}_{L^2(\mu)}}{2}\Big(\frac{A\lambda_N t_0\norm{\nabla v}_{L^\infty}}{4\kappa} \norm{\theta_0}_{L^2(\mu)}^2+\norm{\nabla \theta_0}_{L^2(\mu)}^2\Big)^{1/2}\,.
	\end{align*}
	Using~\eqref{e:H1small} this gives
	\begin{align}\label{e:H1largetmp3}
		\nonumber
		\int_{\frac{t_0}{2}}^{t_0}\norm{\theta-\phi}_{L^2(\mu)}^2\,ds&\leq  \frac{\sqrt{2\kappa}t_0^{3/2}\norm{\theta_0}_{L^2(\mu)}^2}{2}\Big(\frac{A\lambda_N t_0\norm{\nabla v}_{L^\infty}}{4\kappa} +\frac{\lambda_N}{\kappa}\Big)^{1/2}\\
		&\leq \frac{\sqrt{2}t_0^{3/2}\norm{\theta_0}_{L^2(\mu)}^2}{2}\Big(\frac{A\lambda_N t_0\norm{\nabla v}_{L^\infty}}{2} \Big)^{1/2}\,.
	\end{align}
	To obtain the last inequality above, we used the fact $At_0 \norm{\grad v}_{L^\infty} \gg 1$, which is guaranteed by the choice of $t_0$ in~\eqref{e:t0choice}.

	Using~\eqref{e:H1largetmp2} and~\eqref{e:H1largetmp3} in~\eqref{e:H1largetmp1} gives
	\begin{align*}
		\kappa \int_{0}^{t_0}\norm{\nabla\theta}_{L^2(\mu)}^2\,ds
		&= \frac{\lambda_N t_0}{4} \norm{\theta_0}_{L^2(\mu)}^2
			\paren[\Big]{1
				- \frac{\lambda_N}{\kappa} h\paren[\Big]{ \frac{A t_0}{2} }
				- 2 t_0 \sqrt{A \lambda_N \norm{\grad v}_{L^\infty}} }
		\\
		&= \frac{\lambda_N t_0}{4} \norm{\theta_0}_{L^2(\mu)}^2
			\paren[\Big]{1
				- \frac{1}{4}
				- 4 h^{-1} \paren[\Big]{ \frac{\kappa}{4\lambda_N} } \sqrt{\frac{\lambda_N \norm{\grad v}_{L^\infty}}{A}} }\,,
	\end{align*}
	where we used~\eqref{e:t0choice} to obtain the last equality.
	Since the function
	\begin{equation*}
		\lambda \mapsto \sqrt{\lambda} h^{-1}\paren[\Big]{ \frac{\kappa}{4\lambda} }
	\end{equation*}
	is increasing, the definition of~$H(A)$ (equation~\eqref{e:H}) and the fact that $\lambda_N \leq H(A)$ imply
	\begin{equation*}
		\kappa \int_{0}^{t_0}\norm{\nabla\theta}_{L^2(\mu)}^2\,ds
		= \frac{\lambda_N t_0}{4} \norm{\theta_0}_{L^2(\mu)}^2
			\paren[\Big]{1
				- \frac{1}{4}
				- \frac{1}{4} }
		\geq \frac{\lambda_N t_0}{8} \norm{\theta_0}_{L^2(\mu)}^2\,.
	\end{equation*}
	This contradicts~\eqref{e:converse}, and hence concludes the proof of~\eqref{e:intH1large}.
	\smallskip

	Finally, to obtain~\eqref{e:decayaftert0} we use Lemma~\ref{l:spectral} to find~$\Lambda = \Lambda(\kappa)$ such that
	\begin{equation}\label{e:LambdaDef}
		\lambda_{n+1} \leq 2 \lambda_{n}
		\quad \text{whenever } \lambda_{n+1} \geq \Lambda\,.
	\end{equation}
	If~$A_0$ is chosen according to~\eqref{e:A0Def}, then we will have $H(A) \geq \Lambda$, and hence
	\begin{equation*}
	  \frac{ H(A)}{2}\leq  \lambda_N  \leq H(A) \,.
	\end{equation*}
	Combining this with~\eqref{e:intH1large} we obtain
	\begin{equation*}
		\kappa \int_0^{t_0}\norm{\nabla \theta}_{L^2(\mu)}^2\,ds \geq \frac{H(A) t_0}{16}\norm{\theta_0}_{L^2(\mu)}^2\,.
	\end{equation*}
	Now the energy equality~\eqref{e:EE} implies~\eqref{e:decayaftert0} as desired.
\end{proof}

\section{Bounding \texorpdfstring{$\tmix$}{tmix} in terms of \texorpdfstring{$\tdis$}{tdis} (Proposition \ref{p:tdisTmix})}\label{s:tmixtdis}
\subsection{The lower bound}

It is natural to expect that the mixing time controls the dissipation time in a general setting, and a similar result appeared recently in~\cite{IyerZhou22}.
Roughly speaking, to bound the mixing time, we need to start~$X$ with \emph{any} 
initial distribution and show that the distribution of $X$ becomes close to the invariant distribution in the total variation norm.
To bound the dissipation time, 
we only need to consider $L^2$ initial distribution and bound the distance to the invariant distribution in a weaker sense.
As a result, the lower bound in Proposition~\ref{p:tdisTmix} is true in a more general setting, and the proof we present doesn't rely on the specific structure of~\eqref{e:Aeq}.

\begin{proof}[Proof of the lower bound in Proposition~\ref{p:tdisTmix}]
	Let $f \in \dot L^2(\mu)$, and define
	\begin{equation*}
		\theta_t(x) = \E^{(x, s)} f(X_{s + t}) = \int_{\T^d} \rho(x, s; y, s+t) f(y) \, dy \,.
	\end{equation*}
	In order to prove the lower bound in~\eqref{e:tmixtdis}, we need to show that for any $t \geq 3\tmix$, we have
	\begin{equation}\label{e:2022-09-29-2}
		\norm{\theta_t}_{L^2(\mu)} \leq \frac{1}{2} \norm{f}_{L^2(\mu)}\,.
	\end{equation}
	To prove this, we may without loss of generality assume $s = 0$.
	For notational convenience, we write
	\begin{equation}\label{e:rhoAb}
		\rho_t(x, y) = \rho(x, 0; y, t)\,.
	\end{equation}

	Since $f \in \dot L^2(\mu)$ we note
	\begin{equation*}
		\theta_t(x)
			= \int_{\T^d}  \rho_t(x,y) f(y) \,dy
			= \int_{\T^d} \paren[\big]{ \rho_t(x,y)-\rho_\infty(y) } f(y) \,dy
	\end{equation*}
	and hence
	\begin{align*}
		\MoveEqLeft
		\norm{\theta_t}_{L^2(\mu)}^2
			=\int_{\T^d}\left(\int_{\T^d} \left(\rho_t(x,y)-\rho_\infty(y)\right)f(y)\,dy\right)^2\rho_\infty(x)\,dx
		\\
			&\leq\int_{\T^d}\left(\int_{\T^d}\abs{\rho_t(x,y)-\rho_\infty(y)}f(y)^2\,dy\int_{\T^d}\abs{\rho_t(x,y)-\rho_\infty(y)}\,dy\right)\rho_\infty(x)\,dx\,.
	\end{align*}
	Since $\tmix$ is the mixing time, for any $t \geq n\, \tmix$ we must have
	\begin{equation*}
		\sup_{x \in \T^d} \int_{\T^d} \abs{\rho_t(x, y) - \rho_\infty(y)} \, dy \leq \frac{1}{2^n} \,.
	\end{equation*}
	This implies
	\begin{align}
		\nonumber
		\norm{\theta_t}_{L^2(\mu)}^2&\leq \frac{1}{2^n}\int_{\T^d}\int_{\T^d}\abs{\rho_t(x,y)-\rho_\infty(y)}f(y)^2 \rho_\infty(x)\,dy \, dx\\
		\label{e:2022-09-29-1}
	&\leq \frac{1}{2^n}\int_{\T^d}\int_{\T^d}\left(\rho_t(x,y)+\rho_\infty(y)\right)f(y)^2\rho_\infty(x)\,dx \, dy\,.
	\end{align}
	Since $\rho_\infty$ is the density of the invariant measure, we know
	\begin{equation*}
		\int_{\T^d}\rho_t(x,y)\rho_\infty(x)\,dx=\rho_\infty(y)
		\qquad\text{and}\qquad
		\int_{\T^d} \rho_\infty(x) \, dx = 1\,.
	\end{equation*}
	Using this in~\eqref{e:2022-09-29-1} implies
	\begin{equation*}
		\norm{\theta_t}_{L^2(\mu)}^2
			\leq 2^{1-n}\norm{f}_{L^2(\mu)}^2\,,
	\end{equation*}
	and choosing $n = 3$ implies~\eqref{e:2022-09-29-2} as desired.
\end{proof}

\subsection{The upper bound}

To control the mixing time by the dissipation time we need to use the regularizing effects of the noise.
More precisely, given any initial distribution, the noise regularizes it and the density becomes square-integrable, but with a large $L^2$ norm.
Now waiting some multiple of the dissipation time will ensure mixing.

We implement the above idea by starting with an $L^1 \to L^\infty$ bound on the transition density~$\rho$.
This is the analog of the well-known drift independent $L^1 \to L^\infty$ estimates in~\cite{ConstantinKiselevEA08} in the case where the underlying measure is~$\mu$ instead of the Lebesgue measure.

\begin{lemma}\label{l:l1linfrho}
	When $d \geq 3$,
	for every $x \in \T^d$, $s > s' \geq 0$, and $t > 0$ we have
	\begin{equation}\label{e:l1linfrho}
		\norm[\Big]{ \frac{\rho^{(x, s')}_{s+t}}{\rho_\infty} - 1}_{L^\infty}
		 	\leq \frac{C_1(d)}{(\kappa t)^{d/2} }
		 		\exp\paren[\Big]{ \frac{2d\norm{U}_\osc }{\kappa} }
				\norm[\Big]{ \frac{\rho^{(x, s')}_s}{\rho_\infty} - 1 }_{L^1(\mu)}\,,
	\end{equation}
	where $\rho^{(x,s)}_t$ denotes the transition density $\rho(x, s; \cdot, t)$, and~$C_1$ is a dimensional constant that can be bounded by
	\begin{equation}\label{e:C1bound}
	  C_1(d) \leq C 2^d\,,
	\end{equation}
where $C$ is a universal constant independent of $d$.
	When $d = 2$, the inequality~\eqref{e:l1linf} needs to be replaced by
	\begin{equation}\label{e:lilinf2drho}
		\norm[\Big]{ \frac{\rho^{(x, s')}_{s+t}}{\rho_\infty} - 1}_{L^\infty}
		 	\leq \frac{C_1'(\epsilon)}{(\kappa t)^{1 + \epsilon} }
		 		\exp\paren[\Big]{ \frac{(4 + 4\epsilon) \norm{U}_\osc }{\kappa} }
				\norm[\Big]{ \frac{\rho^{(x, s')}_s}{\rho_\infty} - 1 }_{L^1(\mu)}\,,
	\end{equation}
	where $\epsilon > 0$, and $C_1'(\epsilon)$ is an~$\epsilon$-dependent constant.
\end{lemma}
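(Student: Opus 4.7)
\medskip\noindent\textbf{Proof proposal.}
The plan is to reduce the estimate to a Nash-type $L^1 \to L^\infty$ argument for a drift--diffusion PDE, in the spirit of Constantin--Kiselev--Ryzhik--Zlato\v s, but carried out with respect to the weighted measure $\mu$.

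The first step is to identify the right equation. Writing $g_t = \rho^{(x, s')}_{t}/\rho_\infty$, one differentiates using the Fokker--Planck equation for $\rho$, substitutes $\rho = g \rho_\infty$, and uses the two identities $\kappa \Delta \rho_\infty + \nabla \cdot (\nabla U \rho_\infty) = 0$ (stationarity) and $\nabla \cdot (v \rho_\infty) = 0$ (from~\eqref{e:measurePreserving}). After the dust settles one obtains
\begin{equation*}
  \partial_t g = -A v'_t \cdot \nabla g + \mathcal L_\kappa g\,,
\end{equation*}
i.e., the Kolmogorov backward equation~\eqref{e:kBackward} with the sign of the drift reversed (this is the content of the ``Lemma~\ref{l:eqverify}'' the authors refer to). Since $\int g\,d\mu = \int \rho^{(x,s')}\,dy = 1$, the function $\tilde g \defeq g - 1$ lies in $\dot L^1(\mu)$ and satisfies the same PDE.

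The second step is an $L^p(\mu)$ energy estimate in which the drift disappears. Multiplying the equation by $|\tilde g|^{p-2} \tilde g$ and integrating against $d\mu$, the drift contribution becomes $-A\int v' \cdot \nabla(|\tilde g|^p/p)\,d\mu$, which vanishes because $v'$ preserves $\mu$. Using $-\langle \mathcal L_\kappa \tilde g, |\tilde g|^{p-2}\tilde g\rangle_\mu = (p-1)\kappa \int |\tilde g|^{p-2}|\nabla \tilde g|^2\,d\mu$ and rewriting as $|\nabla |\tilde g|^{p/2}|^2$ yields
\begin{equation*}
  \frac{d}{dt}\|\tilde g\|_{L^p(\mu)}^p = -\frac{4\kappa(p-1)}{p} \bigl\| \nabla |\tilde g|^{p/2} \bigr\|_{L^2(\mu)}^2\,.
\end{equation*}

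The third step is Moser iteration over $p = 2^k$, $k = 0, 1, 2, \dots$. Because $\rho_\infty$ is pinched by $e^{-\|U\|_\osc/\kappa}/|\T^d| \le \rho_\infty \le e^{\|U\|_\osc/\kappa}/|\T^d|$, the weighted and Lebesgue $L^q$-norms are equivalent up to a multiplicative factor of $e^{\|U\|_\osc/(q\kappa)}$. Applying the standard Nash (Gagliardo--Nirenberg) inequality on $\T^d$,
\begin{equation*}
  \|u\|_{L^2}^{2+4/d} \le C \|u\|_{L^1}^{4/d} \bigl( \|\nabla u\|_{L^2}^2 + \|u\|_{L^1}^2 \bigr)\,,
\end{equation*}
to $u = |\tilde g|^{2^{k-1}}$ and converting back to $L^q(\mu)$ norms yields a differential inequality of the form
\begin{equation*}
  \frac{d}{dt} \|\tilde g\|_{L^{2^k}(\mu)}^{2^k} \le -c_k \kappa\, e^{-\alpha_k \|U\|_\osc/\kappa}\, \|\tilde g\|_{L^{2^k}(\mu)}^{2^k \cdot (1 + 2/d)} \, \|\tilde g\|_{L^{2^{k-1}}(\mu)}^{-2^k \cdot 2/d} + \text{lower order}\,.
\end{equation*}
Standard ODE comparison then recursively bounds $\|\tilde g_{s+t}\|_{L^{2^k}(\mu)}$ in terms of $\|\tilde g_s\|_{L^1(\mu)}$ and the elapsed time $t$. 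Passing $k \to \infty$ and summing the geometric series of constants and exponents yields the claimed bound
\begin{equation*}
  \|\tilde g_{s+t}\|_{L^\infty} \le \frac{C_1(d)}{(\kappa t)^{d/2}} \exp\!\Bigl(\frac{2d \|U\|_\osc}{\kappa}\Bigr) \|\tilde g_s\|_{L^1(\mu)}\,,
\end{equation*}
with $C_1(d) \le C 2^d$ coming from the accumulation of the dimensional Nash constants at each doubling step. The case $d = 2$ is treated similarly but with the $(2+\epsilon)/(1+\epsilon)$ variant of Gagliardo--Nirenberg, which replaces the critical exponent $d/2 = 1$ by $1 + \epsilon$ at the price of an $\epsilon$-dependent prefactor.

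The main obstacle is bookkeeping: at each doubling step $2^{k-1} \mapsto 2^k$ the conversion between $L^q(\mu)$ and $L^q(dx)$ contributes a factor like $e^{C \|U\|_\osc/(2^k \kappa)}$, and these must be summed over $k$ together with the $k$-dependent Nash constants to arrive at a single clean exponent proportional to $2d \|U\|_\osc/\kappa$. Keeping this tracking uniform in $A$, $v$, $s$, and $s'$---which we can because the drift contribution was eliminated in the very first energy estimate---is what makes the bound ``drift independent''.
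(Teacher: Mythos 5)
Your reduction to the sign-flipped Kolmogorov backward equation (via the paper's Lemma~\ref{l:eqverify}), the observation that the drift contribution drops out of the $L^p(\mu)$ energy estimate because $v'$ preserves $\mu$, and the use of a Nash-type inequality relative to $\mu$ (absorbing factors of $e^{\|U\|_\osc/\kappa}$ from the equivalence of $\mu$ with Lebesgue measure) all match the paper's strategy. The genuine divergence is in how you pass from $L^1$ to $L^\infty$.

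The paper does \emph{not} run a Moser iteration. It proves the $L^1(\mu)\to L^2(\mu)$ bound by solving the single differential inequality at $p=2$, where the function $\tilde g$ is $\mu$-mean-zero so the mean-zero Nash inequality of Lemma~\ref{l:actuallyTheNashInequality} applies with no lower-order term. It then obtains $L^2(\mu)\to L^\infty$ by the semigroup duality identity $\bigl(\mathcal P_{s,t}(v')\bigr)^* = \mathcal P_{s,t}(-v')$ (adjoint taken in $L^2(\mu)$): since $-v'$ also preserves $\mu$, the adjoint satisfies the \emph{same} $L^1\to L^2$ bound, which dualizes to an $L^2\to L^\infty$ bound, and the two are composed over $[s,s+t/2]$ and $[s+t/2,s+t]$. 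This is Nash's classical duality trick, and it buys you two things your route gives up: (a) the Nash inequality is only used once, and only for mean-zero data, so there is no lower-order $\|u\|_{L^1}^2$ term to absorb; and (b) the constant $(dC_2)^{d/2}$ falls out cleanly from squaring the $L^1\to L^2$ constant, rather than from an infinite product over dyadic levels.

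Your Moser-iteration route can be made to work, but as written there is a gap you would need to fill: for $p>2$ the function $|\tilde g|^{p/2}$ is not $\mu$-mean-zero, so the no-lower-order-term Nash inequality does not apply; you must carry the $\|u\|_{L^1}^2$ term through the recursion and arrange the iteration on nested time intervals $[t(1-2^{-k}),t]$ so that the differential inequality for each $y_k=\|\tilde g\|_{L^{2^k}(\mu)}^{2^k}$ is integrated over an interval of length $t 2^{-k}$ starting from the bound produced at level $k-1$. That bookkeeping — absorbing the lower-order terms, summing the dyadic exponents to get $t^{-d/2}$, and tracking the $\kappa$- and $d$-dependent prefactors — is where all the work is, and your sketch asserts the conclusion rather than carrying it out. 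If you want to keep the iteration route, spell out that time-slicing and verify the geometric accumulation of constants; otherwise, the duality argument gets you there in three lines once you notice that the adjoint semigroup is generated by the same kind of velocity field.
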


Momentarily postponing the proof of Lemma~\ref{l:l1linfrho}, we now prove the upper bound in Proposition~\ref{p:tdisTmix} and control the mixing time in terms of the dissipation time.
\begin{proof}[Proof of the upper bound in Proposition~\ref{p:tdisTmix}]	
	For simplicity, and without loss of generality, we will again assume~$s = 0$, and abbreviate the transition density as in~\eqref{e:rhoAb}.
	We will also assume $d \geq 3$.
	The proof when $d = 2$ is similar, and follows by choosing~$\epsilon > 0$ and replacing our use of~\eqref{e:l1linfrho} with~\eqref{e:lilinf2drho}.

	When~$d \geq 3$, inequality~\eqref{e:l1linfrho} implies
	\begin{align*}
		\MoveEqLeft
		\norm{\rho_t(x,\cdot)-\rho_\infty}_{L^1}
			= \int_{\T^d} \abs{\rho_t(x,y)-\rho_\infty(y)} \, dy
			\leq \norm[\Big]{\frac{\rho_t(x,\cdot)}{\rho_\infty}-1}_{L^\infty}
		\\
			&\leq  
				C_1\paren[\Big]{ \frac{4}{\kappa t} }^{ \frac{d}{2} }
				\exp\paren[\Big]{ \frac{2d\norm{U}_\osc }{\kappa} }
				\norm[\Big]{\frac{\rho_{3t/4}(x,\cdot)}{\rho_\infty}-1}_{L^1(\mu)}
		\\
			&\leq  
				C_1 \paren[\Big]{ \frac{4}{\kappa t} }^{ \frac{d}{2} }
				\exp\paren[\Big]{ \frac{2d\norm{U}_\osc }{\kappa} }
				\norm[\Big]{\frac{\rho_{3t/4}(x,\cdot)}{\rho_\infty}-1}_{L^2(\mu)}\,.
	\end{align*}
	Here $C_1 = C_1(d)$ is the constant form Lemma~\ref{l:l1linfrho}.

	The above implies that for any $t \geq 4 n \tdis$, we have 
	\begin{align*}
		\MoveEqLeft
		\norm{\rho_t(x,\cdot)-\rho_\infty}_{L^1}
		\\
			&\leq  
				C_1\paren[\Big]{ \frac{4}{\kappa \tdis} }^{ \frac{d}{2}}
				\exp\paren[\Big]{ \frac{2d\norm{U}_\osc }{\kappa} - n }
				\norm[\Big]{\frac{\rho_{t/2}(x,\cdot)}{\rho_\infty}-1}_{L^2(\mu)}
		\\
			&\leq  
				C_1^2 \paren[\Big]{ \frac{4}{\kappa \tdis} }^{ d }
				\exp\paren[\Big]{ \frac{(4d\norm{U}_\osc }{\kappa}  - n}
				\norm[\Big]{\frac{\rho_{t/4}(x,\cdot)}{\rho_\infty}-1}_{L^1(\mu)}
		\\
			&\leq  
				2 C_1^2 \paren[\Big]{ \frac{4}{\kappa \tdis} }^{ d }
				\exp\paren[\Big]{ \frac{4d\norm{U}_\osc }{\kappa}  - n} \,.
	\end{align*}
Choosing 
\begin{align}\label{e:n}
n=\ceil[\Big]{\frac{4d\norm{U}_{\osc}}{\kappa}+(2d+2)\ln 2+2\ln C_1-d\ln(\kappa \tdis)}
\end{align}
yields
\begin{align}
	\norm{\rho_t(x,\cdot)-\rho_\infty}_{L^1}\leq \frac{1}{2}\,,\quad \text{at $t=4n\tdis$}\,.
\end{align}
Using~\eqref{e:C1bound} we note that~$n$ in~\eqref{e:n} can be bounded by
\begin{align}
	n\leq C d\paren[\Big]{ 1+\frac{\norm{U}_{\osc}}{\kappa}-\ln(\kappa\tdis)}\,,
\end{align}
for some dimension independent constant~$C$, and this implies the upper bound in~\eqref{e:tmixtdis} as desired.
\end{proof}

It remains to prove Lemma~\ref{l:l1linfrho}, which we do in the next sub-section.

\subsection{The \texorpdfstring{$L^1 \to L^\infty $}{L1 to Linfty} bound on the transition density (Lemma~\ref{l:l1linfrho}).}

To prove Lemma~\ref{l:l1linfrho}, we first compute an evolution equation for the ratio $\rho^{(x, s)}_t / \rho_\infty$.
Recall that in the variables~$y, t$, the transition density~$\rho$ is a solution to the forward equation
\begin{equation}\label{e:rhoForward}
	\partial_t \rho = - \dv_y (A v' \, \rho ) + \mathcal L^*_{\kappa, y} \rho\,,
\end{equation}
where~$v'$ is the time rescaled velocity field~\eqref{e:vPrimeDef}, $\mathcal L_\kappa^*$ is defined by
\begin{equation}\label{e:lKappaStar}
	\mathcal L_\kappa^* f  = \dv( f \grad U) + \kappa \lap f\,.
\end{equation}
We clarify that and the notation~$\mathcal L_{\kappa , y}$ refers to the fact that~$\mathcal L_\kappa$ uses derivatives with respect to variable~$y$ in~\eqref{e:rhoForward}.
While the operator~$\mathcal L_\kappa$ is self adjoint with respect to the~$L^2(\mu)$ inner-product, it is not self-adjoint with respect to the standard~$L^2$ inner-product.
Indeed, the adjoint of~$\mathcal L_\kappa$ with respect to the standard~$L^2$ inner-product is precisely~$\mathcal L_\kappa^*$.

\begin{lemma}\label{l:eqverify}
	Let~$\varphi$ be a solution of the forward equation
	\begin{equation}\label{e:kForwardSignV}
		\partial_t \varphi = -\dv (A v' \varphi) + \mathcal L_\kappa^* \varphi\,,
	\end{equation}
	where we recall~$\mathcal L_\kappa^*$ is defined in equation~\eqref{e:lKappaStar}.
	The function~$\theta$, defined by
	\begin{equation*}
		\theta_t(x) \defeq \frac{\varphi_t(x)}{\rho_\infty(x) }\,,
	\end{equation*}
	is a solution of the equation
	\begin{align}\label{e:cpde}
		\partial_t \theta+A v'\cdot \nabla \theta -\mathcal L_\kappa \theta =0\,.
	\end{align}
\end{lemma}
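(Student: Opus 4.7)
The proof will be a direct substitution $\varphi = \theta \rho_\infty$ into the forward equation~\eqref{e:kForwardSignV}, using the two structural facts already established in the paper: that $\rho_\infty \propto e^{-U/\kappa}$ is stationary for~\eqref{e:langevin}, and that the velocity field satisfies the measure preserving condition $\dv(\rho_\infty v) = 0$ (which is just~\eqref{e:measurePreserving} rewritten, and which carries over to $v'$ since $v'_t = v_{At}$).

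First I would handle the transport term. Since $\rho_\infty$ is time-independent, $\partial_t \varphi = \rho_\infty \partial_t \theta$. For the drift term, expand
\begin{equation*}
  -\dv(A v' \varphi) = -A\dv(v' \rho_\infty \theta) = -A \theta\, \dv(v'\rho_\infty) - A \rho_\infty v'\cdot \grad\theta = -A\rho_\infty v'\cdot\grad\theta,
\end{equation*}
where the first term vanishes by the measure preserving condition applied to $v'$.

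The heart of the computation is to show the pointwise identity
\begin{equation*}
  \mathcal L_\kappa^*(\theta \rho_\infty) = \rho_\infty\, \mathcal L_\kappa \theta,
\end{equation*}
which is really the reason behind the ``differs only by a sign'' statement made just before the lemma. The slickest route is via duality: for any smooth test $\psi$,
\begin{equation*}
  \int_{\T^d} \psi\, \mathcal L_\kappa^*(\theta \rho_\infty)\,dx
    = \int_{\T^d} \theta\rho_\infty\, \mathcal L_\kappa \psi\,dx
    = \ip{\theta, \mathcal L_\kappa \psi}_\mu
    = \ip{\mathcal L_\kappa \theta, \psi}_\mu
    = \int_{\T^d} \psi\, (\rho_\infty \mathcal L_\kappa \theta)\,dx,
\end{equation*}
where I used that $\mathcal L_\kappa^*$ is the Lebesgue adjoint of $\mathcal L_\kappa$ and that $\mathcal L_\kappa$ is $\mu$-self-adjoint (assertion~(1) of Lemma~\ref{l:spectral}). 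Alternatively, a direct computation using $\grad\rho_\infty = -\rho_\infty \grad U/\kappa$ expands $\mathcal L_\kappa^*(\theta\rho_\infty) = \dv(\theta\rho_\infty \grad U) + \kappa \lap(\theta\rho_\infty)$ and, after collecting the terms that multiply $\theta$ without derivatives, those terms cancel by the identity $\mathcal L_\kappa^*\rho_\infty = 0$ (stationarity of $\rho_\infty$).

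Assembling the three computations in~\eqref{e:kForwardSignV} yields
\begin{equation*}
  \rho_\infty \partial_t \theta = -A \rho_\infty v'\cdot \grad\theta + \rho_\infty \mathcal L_\kappa \theta,
\end{equation*}
and dividing through by $\rho_\infty > 0$ gives~\eqref{e:cpde}. There is no real obstacle here; the only point worth being careful about is the sign flip in the convective term, which is exactly what makes the ratio $\varphi/\rho_\infty$ solve a backward-type equation rather than the forward one. This sign flip is what later lets the proof of Lemma~\ref{l:l1linfrho} reuse the $L^1\to L^\infty$ machinery of~\cite{ConstantinKiselevEA08} essentially verbatim.
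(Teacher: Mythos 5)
Your proof is correct and follows essentially the same direct-substitution route as the paper; the only cosmetic difference is that you isolate the diffusion term as the identity $\mathcal L_\kappa^*(\theta\rho_\infty)=\rho_\infty\,\mathcal L_\kappa\theta$ and verify it by duality against the $\mu$-self-adjointness of $\mathcal L_\kappa$, whereas the paper expands everything pointwise and cancels using $\mathcal L_\kappa^*\rho_\infty=0$ and $\kappa\grad\rho_\infty=-\rho_\infty\grad U$. Both arguments are valid and equally short.
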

\begin{remark*}
	The equation~\eqref{e:cpde} differs from the backward equation~\eqref{e:kBackward} only in the sign of the convection term~$A v' \cdot \grad \theta$.
\end{remark*}
\begin{proof}
	The proof is a direct calculation.
	Substituting~$\varphi = \rho_\infty \theta$ in~\eqref{e:kForwardSignV} yields
	\begin{align*}
		\rho_\infty \partial_t \theta
			&= \theta \paren[\Big]{\mathcal L_\kappa^* \rho_\infty - \dv( A v' \rho_\infty ) }
		\\
			&\qquad
				\mathbin{+} \rho_\infty \paren[\Big]{ (\grad U + A v') \cdot \grad \theta + \kappa \lap \theta }
				+ 2 \kappa \grad \rho_\infty \cdot \grad \theta\,.
	\end{align*}
	Using~\eqref{e:measurePreserving}, and the fact that
	\begin{equation*}
		\mathcal L_\kappa^* \rho_\infty = 0\,,
		\qquad
		\kappa \grad \rho_\infty = - \rho_\infty \grad U\,,
	\end{equation*}
	we obtain
	\begin{equation*}
		\rho_\infty \partial_t \theta
			= \rho_\infty \paren[\big]{ -A v' \cdot \grad \theta + \mathcal L_\kappa \theta }\,.
	\end{equation*}
	Since $\rho_\infty > 0$, this implies~\eqref{e:cpde} concluding the proof.
\end{proof}

The next lemma we need is an $L^1\to L^\infty$ bound on the semigroup operator of~\eqref{e:kBackward}.
This is the analog of the results in~\cite{ConstantinKiselevEA08,Zlatos10,IyerXuEA21,FannjiangKiselevEA06} when the underlying measure is~$\mu$ and not the Lebesgue measure.
\begin{lemma}\label{l:L1Linf}
	When $d \geq 3$, every solution to~\eqref{e:cpde} with $\mu$-mean zero initial data satisfies
	\begin{align}\label{e:l1linf}
		\norm{\theta_{s + t}}_{L^\infty}\leq
			\frac{C_1(d)}{(\kappa t)^{d/2}}
			\exp\paren[\Big]{ \frac{2d\norm{U}_\osc }{\kappa} } \norm{\theta_s}_{L^1(\mu)}\,.
	\end{align}
	where $C_1(d)$ is as defined in~\eqref{e:C1bound}.


	When~$d = 2$, the inequality~\eqref{e:l1linf} needs to be replaced by
	\begin{align}\label{e:l1linf2d}
		\norm{\theta_{s + t}}_{L^\infty}\leq
			\frac{C_1'(\epsilon)}{(\kappa t)^{d/2+\epsilon}}
			\exp\paren[\Big]{ \frac{(2d+4\epsilon)\norm{U}_\osc }{\kappa} } \norm{\theta_s}_{L^1(\mu)}\,,
	\end{align}
	where $\epsilon > 0$ and $C_1'(\epsilon)$ is an~$\epsilon$-dependent constant.
\end{lemma}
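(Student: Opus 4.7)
The plan is to follow the classical Nash-type strategy used in~\cite{ConstantinKiselevEA08}, transferred to the weighted measure~$\mu$. The key is to first establish an $L^1(\mu)\to L^2(\mu)$ decay bound by combining an energy identity with a Nash inequality, and then to boost this to $L^1(\mu)\to L^\infty$ via a duality argument. The duality works precisely because, by Lemma~\ref{l:eqverify}, the adjoint of the semigroup for~\eqref{e:cpde} in the $L^2(\mu)$ inner product is the semigroup for the backward equation~\eqref{e:kBackward}, differing only in the sign of the convection term $Av'\cdot\nabla$. Since $v'$ is $\mu$-divergence-free, both semigroups satisfy identical $L^2$ energy and $L^1(\mu)$ contraction estimates, so the same $L^1(\mu)\to L^2(\mu)$ bound applies to both.

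Concretely, I would proceed in four steps. \emph{Step 1 (Energy identity).} Testing~\eqref{e:cpde} against $\theta$ in $L^2(\mu)$ and using the measure-preserving property of $v'$ exactly as in the proof of Lemma~\ref{l:h1Large} gives
\begin{equation*}
\partial_t \|\theta\|_{L^2(\mu)}^2 = -2\kappa \|\nabla \theta\|_{L^2(\mu)}^2.
\end{equation*}
\emph{Step 2 (Nash inequality adapted to $\mu$).} Normalizing $\min U = 0$, the bounds $e^{-\|U\|_\osc/\kappa}/Z \le \rho_\infty \le 1/Z$ together with $Z \le 1$ allow us to transfer the standard torus Nash inequality for Lebesgue-mean-zero functions, $\|f-\bar f_L\|_{L^2}^{2+4/d}\le C_N\|\nabla f\|_{L^2}^2\|f-\bar f_L\|_{L^1}^{4/d}$, into the $\mu$-version
\begin{equation*}
\|f\|_{L^2(\mu)}^{2+4/d} \le C(d)\, e^{c_d\|U\|_\osc/\kappa}\,\|\nabla f\|_{L^2(\mu)}^2\,\|f\|_{L^1(\mu)}^{4/d}
\end{equation*}
for $\mu$-mean-zero $f$ (noting that for $\mu$-mean zero functions $\|f\|_{L^2(\mu)}\le\|f-\bar f_L\|_{L^2(\mu)}$, and that $\|f-\bar f_L\|_{L^1}\le 2\|f\|_{L^1}$). \emph{Step 3 ($L^1(\mu)$ contraction).} Since $\theta$ satisfies~\eqref{e:cpde} iff $\varphi=\rho_\infty\theta$ satisfies the conservation-form equation~\eqref{e:kForwardSignV}, and $\|\theta\|_{L^1(\mu)}=\|\varphi\|_{L^1}$, we obtain $\partial_t\|\theta\|_{L^1(\mu)}\le 0$ by the standard contraction argument for forward equations in conservation form (test against $\operatorname{sgn}\varphi$). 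The analogous argument, using $\int \mathcal L_\kappa f\,d\mu=0$ together with the measure preserving property of $v'$, shows $L^1(\mu)$ contraction for the backward equation~\eqref{e:kBackward} as well. \emph{Step 4 (ODE integration + duality).} Substituting the Nash inequality into the energy identity with $L=\|\theta_0\|_{L^1(\mu)}$ and $y(t)=\|\theta_t\|_{L^2(\mu)}^2$ gives the Bernoulli-type ODE inequality $y'\le -c(d)\kappa e^{-c_d\|U\|_\osc/\kappa}y^{1+2/d}L^{-4/d}$, which integrates to
\begin{equation*}
\|\theta_t\|_{L^2(\mu)} \le \frac{C'(d)}{(\kappa t)^{d/4}}\,\exp\paren[\Big]{\frac{c'_d\|U\|_\osc}{\kappa}}\,\|\theta_0\|_{L^1(\mu)}.
\end{equation*}
Applying the same estimate to the backward equation, and using the duality identity
\begin{equation*}
\|S_t\|_{L^1(\mu)\to L^\infty}\le \|S_{t/2}\|_{L^1(\mu)\to L^2(\mu)}\cdot\|S_{t/2}^*\|_{L^1(\mu)\to L^2(\mu)}
\end{equation*}
(where the factorization $\|S_{t/2}\|_{L^2(\mu)\to L^\infty}=\|S_{t/2}^*\|_{L^1(\mu)\to L^2(\mu)}$ is obtained by testing the kernel of $S_{t/2}$ against an approximation of a Dirac mass), yields~\eqref{e:l1linf}.

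The main obstacle will be the careful bookkeeping of the exponential factors to match the stated $\exp(2d\|U\|_\osc/\kappa)$: the $L^p(\mu)$ versus $L^p$ comparisons are lossy when $d$ is small, and squaring the $L^1\to L^2$ bound at time $t/2$ to produce an $L^1\to L^\infty$ bound doubles all exponents. A crude upper bound, rather than a sharp one, appears to suffice for the intended applications. The borderline case $d=2$ is handled by replacing Nash with a Gagliardo--Nirenberg inequality $\|f-\bar f_L\|_{L^2}^{2+2\epsilon}\le C_\epsilon\|\nabla f\|_{L^2}^2\|f-\bar f_L\|_{L^1}^{2\epsilon}$ (valid on $\T^2$ for any $\epsilon>0$), which propagates unchanged through the ODE and duality steps and produces the slightly weaker scaling~\eqref{e:l1linf2d}.
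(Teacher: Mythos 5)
Your proposal follows essentially the same route as the paper: the energy identity, a Nash inequality transferred to the measure $\mu$ via the two-sided bounds on $\rho_\infty$, an $L^1(\mu)$ contraction, integration of the resulting Bernoulli-type differential inequality to get $L^1(\mu)\to L^2(\mu)$ decay, and the duality/semigroup-splitting step using that the $L^2(\mu)$-adjoint is the solution operator of the same equation with the drift sign reversed (hence satisfies the identical bound). The minor variations — testing against $\operatorname{sgn}\varphi$ for the $L^1$ contraction instead of the paper's comparison-principle splitting, and your slightly different handling of the mean in the Nash transfer — are both standard and sound, and the exponential bookkeeping closes as you anticipate.
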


Of course Lemma~\ref{l:eqverify} and~\ref{l:L1Linf} immediately imply Lemma~\ref{l:l1linfrho}.
\begin{proof}[Proof of Lemma~\ref{l:l1linfrho}]
	For any fixed $x \in \T^d$, $s' \geq 0$, we know that the transition density $\rho(x, s'; y, t)$ satisfies the forward equation~\eqref{e:kForwardSignV} in the variables~$y$, $t$.
	Thus, by Lemma~\ref{l:eqverify}, the function~$\theta$ defined by
	\begin{equation*}
		\theta_t(y) \defeq \frac{\rho(x, s'; y, t)}{\rho_\infty(y)} - 1\,,
	\end{equation*}
	satisfies equation~\eqref{e:cpde}.
	Clearly~$\theta$ has $\mu$-mean zero.
	Also, for any for any $s > s'$, $\theta_s \in L^1(\mu)$, and so Lemma~\ref{l:L1Linf} applies.
	The bounds~\eqref{e:l1linfrho} and~\eqref{e:lilinf2drho} follow immediately from~\eqref{e:l1linf} and~\eqref{e:l1linf2d} respectively.
\end{proof}


It remains to prove Lemma~\ref{l:L1Linf}.
For this we will need a Nash inequality with respect to the measure~$\mu$.
\begin{lemma}[Nash Inequality]\label{l:actuallyTheNashInequality}
  For $d \geq 3$ and any $\mu$-mean zero function~$f$ we have
	\begin{equation}\label{e:nashRd1}
		C_2 \norm{\grad f}_{L^2(\mu)}^2 \geq
			\frac{ \norm{f}_{L^2(\mu)}^{2 + \frac{4}{d} }}{\norm{f}_{L^1(\mu)}^{\frac{4}{d} } }\,,
	\end{equation}
	where~$C_2 = C_2(d, U, \kappa)$ is a dimensional constant that can be bounded by
	\begin{equation*}
	  C_2 \leq
			2^{2+\frac{8}{d}} C_d^2
			\exp\paren[\Big]{ \frac{4 \norm{U}_\osc}{\kappa } }
	\end{equation*}
	where
	\begin{align}\label{e:cd}
		C_d
			\defeq \frac{1}{ \sqrt{\pi d (d-2)} } \paren[\Big]{ \frac{ \Gamma(d) }{ \Gamma(\frac{d}{2}) } }^{1/d}\,.
	\end{align}
	When $d=2$, the inequality~\eqref{e:nashRd1} needs to be replaced by
	\begin{align}\label{e:Nash2d}
	C_2'\norm{\nabla f}_{L^2(\mu)}^2\geq \frac{\norm{f}_{L^2(\mu)}^{4-\delta}}{\norm{f}_{L^1(\mu)}^{2-\delta}}\,,
	\end{align}
	where~$\delta\in(0,2)$ is arbitrary, and $C_2' = C_2'(\delta, U, \kappa)$ can be bounded by
	\begin{equation*}
	  C_2' \leq C''_2(\delta) \exp\paren[\Big]{ \frac{4\norm{U}_\osc}{\kappa } }\,,
	\end{equation*}
	for some $\delta$-dependent constant~$C''_2$.
\end{lemma}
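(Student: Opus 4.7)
The plan is to deduce the $\mu$-weighted Nash inequality from the classical (Lebesgue) Nash inequality on $\T^d$, together with the pointwise comparability $e^{-\norm{U}_\osc/\kappa} \leq \rho_\infty \leq e^{\norm{U}_\osc/\kappa}$. This comparability is immediate from $\int_{\T^d}\rho_\infty\,dx = 1$ on the unit-volume torus, since $\max\rho_\infty / \min\rho_\infty = e^{\norm{U}_\osc/\kappa}$, forcing $\max\rho_\infty \leq e^{\norm{U}_\osc/\kappa}$ and $\min\rho_\infty \geq e^{-\norm{U}_\osc/\kappa}$. Write $M = \max\rho_\infty$ and $m = \min\rho_\infty$ for brevity.

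The $\mu$-mean-zero hypothesis on $f$ is not preserved by passing to Lebesgue integrals, so first I would reduce to a Lebesgue-mean-zero test function. Set $h = f - \bar f$ with $\bar f = \int_{\T^d} f\,dx$. The identity $0 = \int f\,d\mu$ combined with $\int\rho_\infty\,dx = 1$ gives $\bar f = -\int h\,d\mu$, and Cauchy--Schwarz on the probability measure $\mu$ then yields $|\bar f|\leq \norm{h}_{L^2(\mu)}$. A short algebraic manipulation expanding $\int f^2\,d\mu$ produces the identity $\norm{h}_{L^2(\mu)}^2 = \norm{f}_{L^2(\mu)}^2 + \bar f^{\,2}$, and hence $\norm{f}_{L^2(\mu)}^2 \leq \norm{h}_{L^2(\mu)}^2 \leq M \norm{h}_{L^2}^2$. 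On the $L^1$ side the crude bound $|\bar f|\leq \norm{f}_{L^1}$ yields $\norm{h}_{L^1} \leq 2\norm{f}_{L^1} \leq (2/m)\norm{f}_{L^1(\mu)}$.

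The final step is to apply the standard Nash inequality to $h$,
\begin{align*}
\norm{h}_{L^2}^{2+4/d}\leq C_d^2\norm{\nabla h}_{L^2}^2\norm{h}_{L^1}^{4/d},
\end{align*}
valid on $\T^d$ for Lebesgue-mean-zero $h$ with the Carlen--Loss constant $C_d$ when $d\geq 3$. Combining this with the bounds from the previous paragraph, and using $\norm{\nabla h}_{L^2}^2 = \norm{\nabla f}_{L^2}^2 \leq (1/m)\norm{\nabla f}_{L^2(\mu)}^2$, leads to
\begin{align*}
\norm{f}_{L^2(\mu)}^{2+4/d}\leq 2^{4/d}C_d^2\,\frac{M^{1+2/d}}{m^{1+4/d}}\,\norm{\nabla f}_{L^2(\mu)}^2\,\norm{f}_{L^1(\mu)}^{4/d}.
\end{align*}
Rewriting $M^{1+2/d}/m^{1+4/d} = (M/m)^{1+2/d}\,m^{-2/d}$ and invoking the density bounds from the first paragraph gives a prefactor at most $e^{(1+4/d)\norm{U}_\osc/\kappa}\leq e^{4\norm{U}_\osc/\kappa}$ when $d\geq 3$, which yields the claimed bound on $C_2$.

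For $d=2$ the sharp Lebesgue Nash constant $C_d$ diverges, so I would instead invoke a Gagliardo--Nirenberg interpolation of the form $\norm{h}_{L^2}^{4-\delta}\leq C''_2(\delta)\norm{\nabla h}_{L^2}^2\norm{h}_{L^1}^{2-\delta}$ for arbitrary $\delta\in(0,2)$, which holds on $\T^2$ for Lebesgue-mean-zero $h$, and then repeat the conversion argument verbatim. There is no real obstacle in this argument; the only delicate point is bookkeeping the exponentials of $\norm{U}_\osc/\kappa$, since a naive use of the $M$--$m$ comparability easily introduces more factors of $e^{\norm{U}_\osc/\kappa}$ than the statement allows.
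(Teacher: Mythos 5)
Your proposal is correct and follows essentially the same route as the paper: both deduce the $\mu$-weighted inequality from the classical (Lebesgue) Nash inequality on $\T^d$ combined with the pointwise comparability $e^{-\norm{U}_\osc/\kappa} \leq \rho_\infty \leq e^{\norm{U}_\osc/\kappa}$. The one small difference is in the $L^2$ mean-shift step: the paper reuses its general comparison Lemma~\ref{l:eqnorm}, incurring a triangle-inequality factor of $2$ there, whereas you exploit the exact orthogonal decomposition $\norm{f-\bar f}_{L^2(\mu)}^2 = \norm{f}_{L^2(\mu)}^2 + \bar f^{\,2}$ (valid since $\int f\,d\mu = 0$) to get $\norm{f}_{L^2(\mu)} \leq \norm{f-\bar f}_{L^2(\mu)}$ for free, yielding a numerically slightly sharper constant that still satisfies the stated bound.
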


Momentarily postponing the proof of Lemma~\ref{l:actuallyTheNashInequality}, we finish the proof of Lemma~\ref{l:L1Linf}.

\begin{proof}[Proof of Lemma~\ref{l:L1Linf}]
	Multiplying equation~\eqref{e:cpde} by $\rho_\infty \theta$ and integrating gives
	\begin{equation}\label{e:tmpL21}
	\partial_t\norm{\theta}_{L^2(\mu)}^2
		=-2\kappa\norm{\nabla\theta}_{L^2(\mu)}^2
		\leq-\frac{2 \kappa}{C_2}
			\frac{ \norm{\theta}_{L^2(\mu)}^{\frac{2(d+2)}{d}} }{ \norm{\theta}_{L^1(\mu)}^{\frac{4}{d}} } \,.
	\end{equation}
	We claim $\norm{\theta_t}_{L^1(\mu)}\leq \norm{\theta_0}_{L^1(\mu)}$.
	To see this, let~$\theta_{t}^+$ and $\theta_t^-$ be solutions of~\eqref{e:cpde} with initial data~$\max\set{\theta_0, 0}$ and $-\min\set{\theta_0, 0}$ respectively.
	By the comparison principle, we know~$\theta_{t}^\pm \geq 0$, and by linearity $\theta_t = \theta_{t, +} - \theta_{t, -}$.
	This implies
	\begin{equation*}
		\norm{\theta_t}_{L^1(\mu)}
			\leq \int_{\T^d} \paren{ \theta_{t}^+ + \theta_{t}^- } \, d\mu
			= \int_{\T^d} \paren{ \theta_{0}^+ + \theta_{0}^- } \, d\mu
			= \norm{\theta_0}_{L^1(\mu)}\,.
	\end{equation*}
	Using this in~\eqref{e:tmpL21} yields
	\begin{equation*}
	\partial_t\norm{\theta_t}_{L^2(\mu)}^2
		\leq-\frac{2 \kappa}{C_2}
			\frac{ \norm{\theta_t}_{L^2(\mu)}^{\frac{2(d+2)}{d}} }{ \norm{\theta_0}_{L^1(\mu)}^{\frac{4}{d}} } \,.
	\end{equation*}
	This is a differential inequality for~$\norm{\theta_t}_{L^2(\mu)}^2$, which can be solved to give
	\begin{align}\label{e:l1l21}
		\norm{\theta_t}_{L^2(\mu)}
			\leq
			\frac{(d\, C_2)^{d/4} }{(4\kappa t)^{d/4} }
			\norm{\theta_0}_{L^1(\mu)} \,.
	\end{align}

	Now let $\mathcal{P}_{s, t}(v')$ denote the solution operator to~\eqref{e:cpde} (i.e.\ the function $\vartheta_t$ defined by $\vartheta_t \defeq \mathcal P_{s, t}(v')(f)$ solves~\eqref{e:cpde} with initial data $\vartheta_s = f$).
	From~\eqref{e:l1l21}, we see
	\begin{equation*}
	  \norm{\mathcal{P}_{s, s+t}(v')}_{\dot L^1(\mu) \to \dot L^2(\mu)}
			\leq \frac{(d\, C_2)^{d/4} }{(4\kappa t)^{d/4} }
	\end{equation*}
	Moreover, since~$v'$ satisfies~\eqref{e:measurePreserving} we see that
	\begin{equation*}
	  \left(\mathcal{P}_{s,t}(v')\right)^*=\mathcal{P}_{s, t}(-v') \,,
	\end{equation*}
	where $\mathcal P_t(v')^*$ denotes the adjoint of $\mathcal P_t v'$ with respect to the~$L^2(\mu)$ inner-product.
	Consequently,
	\begin{align*}
		\norm{\mathcal{P}_{s, s+2t}(v')}_{\dot L^1\to \dot L^\infty}
			&\leq \norm{\mathcal{P}_{s+t, s+2t}(v')}_{\dot L^1\to \dot L^2}\norm{\mathcal{P}_{s, s+t}(v')}_{\dot L^2 \to \dot L^\infty}
		\\
			&=\norm{\mathcal{P}_{s+t, s+2t}(v')}_{\dot L^1\to \dot L^2}\norm{\left(\mathcal{P}_{s, s+t}(v')\right)^*}_{\dot L^1 \to \dot L^2}
		\\
			&\leq \frac{(d\, C_2)^{d/2}}{(4\kappa t)^{d/2} }\,.
	\end{align*}
	This in turn implies
	\begin{align}
		\norm{\theta(t)}_{L^\infty}
			\leq \frac{(d\, C_2)^{d/2}}{(\kappa t)^{d/2} } \norm{\theta_0}_{L^1} \,,
	\end{align}
Recalling the definition of $C_2$, we actually have
\begin{align}
(d\, C_2)^{d/2}\leq C 2^d \exp\paren[\Big]{ \frac{2d\norm{U}_\osc }{\kappa} }\,,
\end{align}
where $C$ is some universal constant independent of $d$.	
	which concludes the proof when $d \geq 3$.

	The proof when~$d = 2$ is similar and only involves using~\eqref{e:Nash2d} instead of~\eqref{e:nashRd1}.
\end{proof}

\subsection{The Nash and Poincar\'e inequalities.}

We conclude this section by proving the Nash (Lemma~\ref{l:actuallyTheNashInequality}) and Poincar\'e inequalities.

When $d \geq 3$, recall the standard Nash inequality states
\begin{align}\label{e:nash}
	C_d^2\norm{\nabla f}_{L^2}^2\geq \frac{ \norm{f - f_0}_{L^2}^{\frac{2(d+2)}{d}} }{ \norm{f - f_0}_{L^1}^{\frac{4}{d}} }\,,
\end{align}
where $C_d$ ia as defined in~\eqref{e:cd}, and $f_0 = \int_{\T^d} f \, dx$.

The Nash inequality above can be deduced from the Sobolev inequality and interpolation.
Indeed, the Sobolev inequality (see for instance~\cite{Lieb83}), implies
\begin{equation}\label{e:sobolev}
C_d\norm{\nabla f}_{L^2}\geq \norm{f - f_0}_{L^{2^*}}\,,
\qquad\text{where}\qquad
	2^*=\frac{2d}{d-2}\,.
\end{equation}
Since $d \geq 3$ note $2^* > 2$, and hence the interpolation inequality gives
\begin{equation*}
	\norm{f - f_0}_{L^2} \leq \norm{f - f_0}_{L^{2^*}}^{\frac{d}{d+2}} \norm{f - f_0}_{L^1}^{\frac{2}{d + 2}} \,.
\end{equation*}
Combined with~\eqref{e:sobolev} this implies~\eqref{e:nash} as claimed.

To prove the Nash inequality~\eqref{e:nashRd1} with respect to the measure~$\mu$, we first need an elementary result controlling the $L^p(\mu)$ 
difference to the mean when the underlying measure~$\mu$ is changed.
\begin{lemma}\label{l:eqnorm}
	Let $\tilde \rho_\infty$ be a probability density function on $\T^d$, and let~$\tilde \mu$ be the probability measure such that $d\tilde \mu = \tilde \rho_\infty \, dx$.
	Suppose there exists a constants~$B_1, B_2$ such that
	\begin{equation*}
		\frac{1}{B_1} \tilde \rho_\infty(x) \leq \rho_\infty(x) \leq B_2 \tilde \rho_\infty(x) 
		\quad\text{for all } x \in \T^d\,.
	\end{equation*}
	Then, for any $p \in [1, \infty)$, and any~$f \in L^p(\mu)$, we have
	\begin{equation*}
		\frac{1}{2 B_1^{1/p} } \norm{ f - \tilde f}_{L^p(\tilde \mu)}
			\leq \norm{f - \bar f}_{L^p(\mu)}
			\leq 2 B_2^{1/p} \norm{f - \bar f}_{L^p(\tilde \mu)} \,.
	\end{equation*}
	Here
	\begin{equation*}
		\bar f = \int_{\T^d} f \, d \mu
		\qquad\text{and}\qquad
		\tilde f = \int_{\T^d} f \, d\tilde \mu\,,
	\end{equation*}
	are the means of~$f$ with respect to the measures~$\mu$ and~$\tilde \mu$ respectively.
\end{lemma}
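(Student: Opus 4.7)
The plan is to combine two observations: (i) the density comparison $B_1^{-1}\tilde\rho_\infty \leq \rho_\infty \leq B_2 \tilde\rho_\infty$ immediately lets us compare $L^p(\mu)$ and $L^p(\tilde\mu)$ norms of any function, with a constant $B_i^{1/p}$; and (ii) for any probability measure $\nu$ and any constant $c$, the triangle inequality together with Jensen's inequality yields $\|f - \bar f_\nu\|_{L^p(\nu)} \leq 2\|f - c\|_{L^p(\nu)}$, where $\bar f_\nu$ is the $\nu$-mean of $f$. The factor $2$ on each side of the claimed estimate comes from step (ii) applied once, while the $B_i^{1/p}$ factor comes from step (i).

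Explicitly, for the right-hand bound I would first apply the triangle inequality
\begin{equation*}
  \|f-\bar f\|_{L^p(\mu)} \leq \|f-\tilde f\|_{L^p(\mu)} + |\tilde f - \bar f|,
\end{equation*}
and then estimate $|\tilde f - \bar f| = \left|\int (f-\tilde f)\,d\mu\right| \leq \|f-\tilde f\|_{L^1(\mu)} \leq \|f-\tilde f\|_{L^p(\mu)}$, since $\mu$ is a probability measure. This yields $\|f-\bar f\|_{L^p(\mu)} \leq 2\|f-\tilde f\|_{L^p(\mu)}$. Then from $\rho_\infty \leq B_2 \tilde\rho_\infty$ I would conclude
\begin{equation*}
  \|f-\tilde f\|_{L^p(\mu)}^p = \int |f-\tilde f|^p \rho_\infty\,dx \leq B_2\int|f-\tilde f|^p\tilde\rho_\infty\,dx = B_2\|f-\tilde f\|_{L^p(\tilde\mu)}^p,
\end{equation*}
giving the desired bound $\|f-\bar f\|_{L^p(\mu)} \leq 2 B_2^{1/p}\|f-\tilde f\|_{L^p(\tilde\mu)}$ (after reading the right-hand side of the claim with $\tilde f$ in place of $\bar f$; if $\bar f$ is intended, the step (ii) reduction is not needed and one gets the bound without the factor $2$).

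The left-hand bound is symmetric. Using the triangle inequality,
\begin{equation*}
  \|f-\tilde f\|_{L^p(\tilde\mu)} \leq \|f-\bar f\|_{L^p(\tilde\mu)} + |\bar f - \tilde f|,
\end{equation*}
and estimating $|\bar f - \tilde f| \leq \|f-\bar f\|_{L^1(\tilde\mu)} \leq \|f-\bar f\|_{L^p(\tilde\mu)}$ since $\tilde\mu$ is a probability measure, gives $\|f-\tilde f\|_{L^p(\tilde\mu)} \leq 2\|f-\bar f\|_{L^p(\tilde\mu)}$. Finally, the density comparison $\tilde\rho_\infty \leq B_1 \rho_\infty$ yields $\|f-\bar f\|_{L^p(\tilde\mu)} \leq B_1^{1/p}\|f-\bar f\|_{L^p(\mu)}$, and combining produces $\|f-\tilde f\|_{L^p(\tilde\mu)} \leq 2B_1^{1/p}\|f-\bar f\|_{L^p(\mu)}$. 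There is no substantive obstacle here: the lemma is a purely measure-theoretic manipulation, and both parts follow from the same two-line pattern.
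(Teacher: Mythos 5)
Your proof is correct and follows essentially the same route as the paper's: triangle inequality to swap the mean, bound the difference of means by the $L^1$ (hence $L^p$) norm using that the measures are probability measures, and then compare norms via the density bounds. You are also right that the displayed inequality in the lemma statement should read $\norm{f-\tilde f}_{L^p(\tilde\mu)}$ on the right-hand side (the paper's own proof derives exactly that), so your reading of the statement matches the intended one.
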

\begin{proof}
	From the triangle inequality, we note
	\begin{align*}
		\norm{f- \bar f}_{L^p(\mu)}
			&\leq \norm{f- \tilde f}_{L^p(\mu)}+\abs{\bar f - \tilde f}
			\leq  B_2^{1/p}\norm{f-\tilde f}_{L^p(\tilde \mu)}
				+ \abs[\Big]{\int_{\T^d}  (f- \tilde f) \, d\mu }
			\\
			&\leq  B_2^{1/p}\norm{f-\tilde f}_{L^p(\tilde \mu)}
				+ \norm{f - \tilde f}_{L^1(\mu)}
			\leq  2 B_2^{1/p}\norm{f-\tilde f}_{L^p(\tilde \mu)}\,.
	\end{align*}
	The proof of the lower bound is similar.
\end{proof}

We now prove Lemma~\ref{l:actuallyTheNashInequality}.
\begin{proof}[Proof of Lemma~\ref{l:actuallyTheNashInequality}]
To prove~\eqref{e:nashRd1} we note Lemma~\ref{l:eqnorm} implies
\begin{equation*}
  C_d^2 \norm{\grad f}_{L^2(\mu)}^2
	\geq C_d^2 \min(\rho_\infty) \norm{\grad f}_{L^2}^2
	\geq
		\frac{ \norm{f - \bar f}_{L^2(\mu)}^{\frac{2(d+2)}{d}} }{ 2^{2 + \frac{8}{d} } B \norm{f - \bar f}_{L^1(\mu)}^{\frac{4}{d}} }\,,
\end{equation*}
where~$\bar f = \int_{\T^d} f \, d\mu$ and
\begin{equation*}
  B= B(U, \kappa) = \frac{\max(\rho_\infty)^{1+\frac{2}{d} } }{\min(\rho_\infty)^{1 + \frac{4}{d} }}
		\leq \paren[\Big]{ \frac{\max(\rho_\infty)}{\min(\rho_\infty)} }^{1 + \frac{4}{d} }
		\leq \exp\paren[\Big]{ \paren[\Big]{1 + \frac{4}{d} } \frac{\norm{U}_\osc}{\kappa} }\,.
\end{equation*}
This finishes the proof when $d \geq 3$.

When $d=2$, the Sobolev inequality in~\eqref{e:sobolev} becomes
\begin{align}
C'\norm{\nabla f}_{L^2}\geq \norm{f-f_0}_{L^p}\,,
\end{align}
for any $2<p<\infty$. And the interpolation inequality gives
\begin{align}
\norm{f-f_0}_{L^2}\leq \norm{f-f_0}_{L^p}^{\frac{p}{2p-2}}\norm{f-f_0}_{L^1}^{\frac{p-2}{2p-2}}\,,
\end{align}
which further yields 
\begin{align}
C'^2\norm{\nabla f}_{L^2}^2\geq \frac{\norm{f-f_0}_{L^2}^{4-\frac{4}{p}}}{\norm{f-f_0}_{L^1}^{2-\frac{4}{p}}}\,.
\end{align}
We then apply Lemma~\ref{l:eqnorm} and get the desired result.
\end{proof}

Finally, for completeness, we conclude this section by stating the Poincar\'e inequality for the measure~$\mu$.
We do not use this in the proof of our main result, but only use it in Remark~\ref{r:rhsPositive} to comment that right hand side of~\eqref{e:tmixtdis} is nonnegative.
\begin{lemma}[Poincar\'e Inequality]\label{l:poincare}
	Let $\lambda_0$ be the smallest eigenvalue of $- \mathcal L_\kappa$ on $\dot L^2(\mu)$.
	Then~$\lambda_0$ is bounded below by
	\begin{equation}\label{e:lambda0Bound}
		\lambda_0 \geq 2\pi \exp\paren[\Big]{ \frac{-\norm{U}_\osc}{ 2\kappa}  }\,.
	\end{equation}
	Moreover, for all $f\in H^1\cap \dot L^2(\mu)$, we have
	\begin{equation}\label{e:poincareconst}
		\lambda_0 \| f\|_{L^2(\mu)}^2 \leq \|\nabla f \|_{L^2(\mu)}^2\,.
	\end{equation}
\end{lemma}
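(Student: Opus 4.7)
The plan is to derive both parts of the lemma from the variational characterization of $\lambda_0$, combined with a comparison to the unweighted Poincar\'e inequality on $\T^d$.

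The Poincar\'e inequality~\eqref{e:poincareconst} follows immediately from Lemma~\ref{l:spectral}: since $-\cL_\kappa$ is self-adjoint and nonnegative on $\dot L^2(\mu)$ with quadratic form $\ip{-\cL_\kappa f, f}_\mu = \kappa \norm{\nabla f}_{L^2(\mu)}^2$, the Rayleigh quotient characterization yields $\lambda_0 \norm{f}_{L^2(\mu)}^2 \leq \kappa \norm{\nabla f}_{L^2(\mu)}^2$. Under the standing assumption $\kappa \leq 1$ that is in force throughout the paper, the factor of $\kappa$ may be dropped, giving~\eqref{e:poincareconst}.

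For the quantitative lower bound on $\lambda_0$, my plan is a standard Holley--Stroock style comparison with the unweighted Poincar\'e inequality on the torus. Given $f \in \dot L^2(\mu)$, let $f_L = \int_{\T^d} f \, dx$ denote its Lebesgue mean, and observe that
$$\norm{f}_{L^2(\mu)}^2 \leq \norm{f - f_L}_{L^2(\mu)}^2 \leq \norm{\rho_\infty}_{L^\infty} \norm{f - f_L}_{L^2}^2.$$
The periodic Poincar\'e inequality on $\T^d$ (whose optimal constant is $1/(4\pi^2)$, since the smallest nonzero eigenvalue of $-\lap$ on $\T^d$ is $4\pi^2$) bounds $\norm{f - f_L}_{L^2}^2$ by $(4\pi^2)^{-1}\norm{\nabla f}_{L^2}^2$, and conversely $\norm{\nabla f}_{L^2}^2 \leq \norm{\rho_\infty^{-1}}_{L^\infty}\norm{\nabla f}_{L^2(\mu)}^2$. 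Chaining these estimates with the explicit bounds $\norm{\rho_\infty}_{L^\infty}, \norm{\rho_\infty^{-1}}_{L^\infty} \leq e^{\norm{U}_\osc/\kappa}$ produces a lower bound of the form $\lambda_0 \geq c \kappa\, e^{-\norm{U}_\osc/\kappa}$.

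The main subtlety I anticipate is matching the precise constants $2\pi$ and $e^{-\norm{U}_\osc/(2\kappa)}$ appearing in~\eqref{e:lambda0Bound}, since the crude Holley--Stroock argument above produces the full exponent $\norm{U}_\osc/\kappa$ rather than the halved one. The halved exponent should be recovered either by exploiting the symmetric factorization $e^{-U/\kappa} = (e^{-U/(2\kappa)})^2$ inherent in the Schr\"odinger conjugation $\cH = \mathcal{U}\cL_\kappa \mathcal{U}^{-1}$ already used in the proof of Lemma~\ref{l:spectral}, or by inserting a Cauchy--Schwarz step that distributes the Radon--Nikodym weight symmetrically between $f$ and $\nabla f$ in the comparison above. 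Since this lemma is used only qualitatively in Remark~\ref{r:rhsPositive} to verify that the right-hand side of~\eqref{e:tmixtdis} is nonnegative, the exact form of the constants is unimportant for the main results of the paper.
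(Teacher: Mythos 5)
Your derivation of~\eqref{e:poincareconst} from the Rayleigh quotient and~\eqref{e:H1norm} is the same as the paper's (the paper is equally cavalier about the stray factor of~$\kappa$). The quantitative bound~\eqref{e:lambda0Bound}, however, is not actually proved by your argument, and you have correctly identified where it falls short without closing the gap. Your Holley--Stroock chain with squared norms yields
\begin{equation*}
	\norm{\grad f}_{L^2(\mu)}^2 \geq 4\pi^2 \exp\paren[\Big]{\frac{-\norm{U}_\osc}{\kappa}} \norm{f}_{L^2(\mu)}^2\,,
\end{equation*}
i.e.\ a constant that is the \emph{square} of the one claimed in~\eqref{e:lambda0Bound}, and for small~$\kappa$ this is exponentially weaker than $2\pi e^{-\norm{U}_\osc/(2\kappa)}$. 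Neither of the two repairs you sketch is developed, and neither is the mechanism that actually produces the stated constant: the Schr\"odinger conjugation changes the operator (it introduces the potential $\tfrac14\abs{\grad U}^2 - \tfrac12\lap U$), and no Cauchy--Schwarz redistribution of the weight will halve the exponent in the squared-norm comparison.

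What the paper does instead is work with the ratio of \emph{un-squared} norms and exploit a $\sup$ over the centering constant. Writing
\begin{equation*}
	\lambda_0
		= \inf_{f} \sup_{c \in \R} \frac{\norm{\grad f}_{L^2(\mu)}}{\norm{f - c}_{L^2(\mu)}}\,,
\end{equation*}
each of the two density comparisons ($\norm{\grad f}_{L^2(\mu)} \geq \sqrt{\min \rho_\infty}\,\norm{\grad f}_{L^2}$ and $\norm{f-c}_{L^2(\mu)} \leq \sqrt{\max\rho_\infty}\,\norm{f-c}_{L^2}$) costs only a square root, so together they cost $e^{-\norm{U}_\osc/(2\kappa)}$; the inner $\sup_c$ then lets one re-center at the Lebesgue mean for free, and the unweighted infimum is $2\pi$ rather than $4\pi^2$. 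Note that squaring this bound recovers exactly your estimate, so the ``halved exponent'' is an artifact of the un-squared normalization of $\lambda_0$ in the paper rather than a sharper inequality --- but since the lemma as stated asserts the constant $2\pi e^{-\norm{U}_\osc/(2\kappa)}$, your proof does not establish it, and observing that the constants are irrelevant to Remark~\ref{r:rhsPositive} does not substitute for proving the stated bound.
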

\begin{proof}
	Using~\eqref{e:H1norm} and standard spectral theory, we know
	\begin{equation*}
		\lambda_0 = \inf_{f \in \dot L^2(\mu) - 0} \frac{\norm{\grad f}_{L^2(\mu)} }{\norm{f}_{L^2(\mu)} }\,,
	\end{equation*}
	which immediately implies~\eqref{e:poincareconst}.
	To obtain~\eqref{e:lambda0Bound}, let $L^2_\mathrm{nc}$ denote the set of all non-constant $L^2$ functions, and note that the above implies
	\begin{align*}
		\lambda_0
			&= \inf_{f \in L^2_\mathrm{nc}(\mu)} \sup_{c \in \R}
					\frac{\norm{\grad f}_{L^2(\mu)} }{\norm{f - c}_{L^2(\mu)} }
			\geq \exp\paren[\Big]{ \frac{-\norm{U}_\osc}{2 \kappa}  }
				\inf_{f \in L^2_\mathrm{nc}} \sup_{c \in \R}
					\frac{\norm{\grad f}_{L^2} }{\norm{f - c}_{L^2} }
		\\
			&= \exp\paren[\Big]{ \frac{-\norm{U}_\osc}{2 \kappa}  }
				\inf_{f \in \dot L^2 - 0} 
					\frac{\norm{\grad f}_{L^2} }{\norm{f}_{L^2} }
			= 2 \pi \exp\paren[\Big]{ \frac{-\norm{U}_\osc}{2 \kappa}  } \,.
			\qedhere
	\end{align*}
\end{proof}

\section{Explicit asymptotics in discrete time}\label{s:heuristics}

In this this section we consider a discrete time version of~\eqref{e:Aeq}.
Namely, we will run equation~\eqref{e:langevin} (without the drift) for time~$1/A$; and then we will run the flow $A v'$ (without noise) for time~$1/A$.
Running the flow (without noise) corresponds to applying the $\mu$-measure preserving diffeomorphism~$\Phi_{s, s+1}$, defined in~\eqref{e:flowdef}.
If instead of applying~$\Phi_{s, s+1}$ (the flow map of a velocity field), we apply an arbitrary~$\mu$-measure preserving diffeomorphism, then we provide an example which is exponentially mixing with rate~\eqref{e:expMixRate}, where the behavior of both~$D$ and~$\gamma$ is known as~$\kappa \to 0$.
This is what leads to the heuristics~\eqref{e:Heusristics} described in Section~\ref{s:intro}.

Explicitly, suppose $\varPhi \colon \T^d \to \T^d$ is a~$\mu$-measure preserving diffeomorphism, and~$\rho^X_t(x, y)$ is the transition density of the solution to~\eqref{e:langevin}.
Let~$Y$ be the Markov process with transition probability
\begin{equation}
	\P( Y_{n+1} \in dy \given Y_n = x ) = \rho^X_{1/A}(x, \varPhi^{-1}(y) )\,.
\end{equation}
Alternately, one can (equivalently) define~$Y_{n+1}$ by letting~$Z$ be the solution of~\eqref{e:langevin} with initial data~$Z_0 = Y_n$, and then defining
\begin{equation}
	Y_{n+1} = \varPhi(Z_{1/A})\,.
\end{equation}
With this notation~$Y_n$ above serves as a proxy for~$X_t$ (the solution to~\eqref{e:Aeq}) where~$n$ and~$t$ are related through
\begin{equation}
	n = At\,.
\end{equation}

The notions of mixing, dissipation time, etc.\ in discrete time are defined analogously to those in the continuous-time setting.
To differentiate from the continuous time versions, in the discrete-time setting we will use~$\ndis$ and~$\nmix$ to denote the dissipation and mixing times respectively. 
The main results in this section are the following.

\begin{proposition}\label{p:DTimeMix}
	\begin{enumerate}[(1)]\reqnomode
	  \item
			There exists a~$\mu$-measure preserving, exponentially mixing diffeomorphism~$\varPhi$ whose mixing rate is 
			\begin{equation}\label{e:DTimeMixRate}
				h(n) = D e^{-\gamma n}\,,
			\end{equation}
			where where $D = D(d, \kappa)$, but $\gamma$ is independent of both~$\kappa$ and~$d$.

		\item
			If further~$U$ is in the form
			\begin{equation}\label{e:seppot}
				U(x)=\sum_{i=1}^d \tilde{U} (x_i) \quad\text{for some}\quad \tilde{U}\in C^2(\T) \,,
			\end{equation}
			then $D$ can be bounded by
			\begin{equation}\label{e:DBound}
				D \leq \sqrt{d} e^{O(1/\kappa)}
				\quad\text{as } \kappa \to 0\,.
			\end{equation}
	\end{enumerate}
\end{proposition}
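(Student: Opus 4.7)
My plan is to construct $\varPhi$ by conjugation. Fix a Lebesgue--preserving, exponentially mixing diffeomorphism $T\colon \T^d\to\T^d$ (for instance, a product of two--dimensional Arnold cat maps acting on disjoint coordinate pairs, with an analogous construction when $d$ is odd), and a smooth diffeomorphism $\Psi\colon\T^d\to\T^d$ pushing Lebesgue measure forward to $\mu$. Set $\varPhi \defeq \Psi\circ T\circ \Psi^{-1}$, which automatically preserves $\mu$. Existence of $\Psi$ in the general case will follow from Moser's theorem; for the separable setting of part~(2) I will take $\Psi$ coordinate-wise, with $\Psi(x)_i = F^{-1}(x_i)$, where $F$ is the CDF of the one--dimensional density $\tilde \rho \propto e^{-\tilde U/\kappa}$. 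In that case $\mu = \tilde\mu^{\otimes d}$ and $\Psi$ sends Lebesgue to $\mu$ by construction.

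The mixing rate of $\varPhi$ reduces to that of $T$ via the identity
\begin{equation*}
	\ip{ f\circ \varPhi^{-n}, g }_{\mu}
		= \ip{ (f\circ \Psi)\circ T^{-n}, \, g\circ \Psi }_{\mathrm{Leb}} \,,
\end{equation*}
combined with the $\dot H^1$ mixing bound for $T$ (applied to $f\circ \Psi$ and $g\circ \Psi$) and the change-of-variables estimate
\begin{equation*}
	\norm{\nabla (f\circ \Psi)}_{L^2}^2
		\leq \norm{D\Psi}_{L^\infty}^2\, \norm{\rho_\infty}_{L^\infty}\, \norm{\nabla f}_{L^2(\mu)}^2\,,
\end{equation*}
which uses the identity $|\det D\Psi| = (\rho_\infty\circ \Psi)^{-1}$ forced by $\Psi_*\mathrm{Leb}=\mu$. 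This produces a mixing bound of the form $h(n)\leq D e^{-\gamma n}$ with $\gamma$ inherited from $T$, hence independent of $\kappa$ and $d$, establishing part~(1).

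For part~(2), the separable structure makes $D\Psi$ diagonal with entries $(F^{-1})'(x_i) = 1/\tilde\rho(F^{-1}(x_i))$. A Laplace-type estimate gives $\min \tilde \rho \geq c\, e^{-\norm{\tilde U}_\osc/\kappa}/\sqrt{\kappa}$, so each diagonal entry is at most $e^{O(1/\kappa)}$. The product structure of $T$ (as a composition/product of 2D cat maps) is compatible with the coordinate-wise structure of $\Psi$: decomposing $f,g$ in the coordinate Fourier basis and applying the $T$-mixing estimate separately to each pair of coordinate blocks, the $d$ coordinate pairs contribute additively, and one Cauchy--Schwarz in the sum over pairs yields the prefactor $\sqrt{d}$. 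Combined with the per-coordinate Jacobian bound $e^{O(1/\kappa)}$, this gives the desired $D\leq \sqrt{d}\,e^{O(1/\kappa)}$.

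The main obstacle will be obtaining precisely the $\sqrt{d}$ factor, rather than a suboptimal $d$ or $d^2$: a naive application of the operator norm $\norm{D\Psi}_{L^\infty}^2$ throws away the diagonal structure and loses dimensional information. The fix is to carry the coordinate decomposition through the mixing estimate for $T$, which requires picking $T$ with an explicit tensor-product structure and writing out $\dot H^1$ norms as sums of $\norm{\partial_i \cdot}_{L^2}^2$. A secondary technical point is to verify that the $\dot H^1\to\dot H^1$ mixing constant of the chosen $T$ on $\T^d$ does not itself grow faster than $\sqrt{d}$; this is plausible for a product of planar cat maps because each factor is two-dimensional, but must be checked carefully when assembling the full bound.
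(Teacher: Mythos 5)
Your proposal is correct and follows essentially the same route as the paper's proof: conjugate a Lebesgue-exponentially-mixing product of low-dimensional toral automorphisms by a transport map pushing Lebesgue measure onto $\mu$, taken coordinatewise in the separable case. The two obstacles you flag are resolved exactly as you anticipate --- the block-diagonal structure yields a Diophantine lower bound $\abs{A^n k}\,\abs{k}^2 \geq \lambda^n/C$ with $C$ depending only on the fixed $2\times 2$ and $3\times 3$ blocks (hence $\gamma$ is dimension-free), and the $\sqrt{d}$ prefactor comes directly from the Frobenius norm of the diagonal Jacobian, $\norm{\grad \psi^{-1}}_{L^\infty} = \paren[\big]{\sum_i \abs{\tilde\varphi'(x_i)}^2}^{1/2} \leq \sqrt{d}\, \sup\abs{\tilde\varphi'}$, with no Fourier decomposition of $f$ and $g$ required.
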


\begin{proposition}\label{p:NdisNmix}
	If~$\varPhi$ is mixing with rate~$h$, then the mixing time and dissipation time of~$Y$ are bounded above by
	\begin{equation}
		\nmix
			\leq C d\paren[\Big]{ 1+\frac{\norm{U}_{\osc}}{\kappa}
				-\ln\paren[\Big]{ \frac{\kappa\ndis}{A} } }
				\ndis\,,
		\qquad
		\ndis \leq \frac{CA}{H(A)}\,.
	\end{equation}
	Here
	\begin{align}\label{e:dicreteH}
		H(A)\defeq \sup \set[\Big]{\lambda \st h\paren[\Big]{ \frac{\sqrt A}{2\sqrt{\lambda}} } \leq \frac{\kappa}{2\lambda}}\,.
	\end{align}
\end{proposition}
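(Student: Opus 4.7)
The plan is to mirror the proofs of Theorem~\ref{t:fconv} and Proposition~\ref{p:tdisTmix} in discrete time, exploiting that $\varPhi$ induces an $L^p(\mu)$-isometry for every $p \in [1,\infty]$. Let $\mathcal{S}_t$ denote the Langevin semigroup on $\dot L^2(\mu)$ generated by $\cL_\kappa$, let $\mathcal{R}_\varPhi f = f \circ \varPhi^{-1}$, and write $\mathcal{T} = \mathcal{R}_\varPhi \mathcal{S}_{1/A}$ for one step of the evolution operator on densities. With $\theta_n = \mathcal{T}^n f$ for $f \in \dot L^2(\mu)$, the self-adjointness of $\mathcal{S}_s$ on $L^2(\mu)$ and the isometry of $\mathcal{R}_\varPhi$ yield the discrete energy identity
\begin{equation}
\norm{\theta_{n+1}}^2_{L^2(\mu)} = \norm{\theta_n}^2_{L^2(\mu)} - 2\kappa\int_0^{1/A}\norm{\nabla \mathcal{S}_s\theta_n}^2_{L^2(\mu)}\,ds\,,
\end{equation}
which replaces~\eqref{e:EE}. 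I would then adapt the high-/low-frequency splitting of Lemmas~\ref{l:h1Large} and~\ref{l:H1small} at the largest eigenvalue $\lambda_N$ of $-\cL_\kappa$ not exceeding $H(A)$; the high-frequency case is immediate from the identity above.

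For the low-frequency case I argue by contradiction: suppose the total dissipation $\kappa\sum_{k=0}^{n_0-1}\int_0^{1/A}\norm{\nabla\mathcal{S}_s\theta_k}_{L^2(\mu)}^2\,ds$ is smaller than $\lambda_N n_0 \norm{\theta_0}_{L^2(\mu)}^2/(8A)$, with $n_0 = \lceil \sqrt{A}/(2\sqrt{\lambda_N})\rceil$. The needed discrete substitute for Lemma~\ref{l:diff} is the elementary bound
\begin{equation}
\norm{\mathcal{S}_{1/A}\theta_k - \theta_k}^2_{L^2(\mu)}\leq 2\kappa\int_0^{1/A}\norm{\nabla\mathcal{S}_s\theta_k}^2_{L^2(\mu)}\,ds\,,
\end{equation}
obtained by expanding the square and using that $\mathcal{S}_s$ is self-adjoint; telescoping and Cauchy--Schwarz then control the deviation $\norm{\theta_n - \phi_n}_{L^2(\mu)}^2 \lesssim \kappa\lambda_N n_0^2 A^{-1}\norm{\theta_0}_{L^2(\mu)}^2$, where $\phi_n = \theta_0 \circ \varPhi^{-n}$ is the pure-transport evolution. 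Meanwhile, if $P_N$ is the orthogonal projection onto the first $N$ eigenfunctions of $-\cL_\kappa$, the mixing assumption~\eqref{e:mixingratedef} on $\varPhi$, applied to $\phi_n$, gives $\norm{P_N\phi_n}_{L^2(\mu)} \leq \sqrt{\lambda_N/\kappa}\,h(n)\,\norm{\theta_0}_{\dot H^1(\mu)}$. Plugging in $n = n_0$ and using the defining equation~\eqref{e:dicreteH} shows $\norm{P_N\phi_{n_0}}_{L^2(\mu)}^2 \lesssim \norm{\theta_0}_{L^2(\mu)}^2/4$, so $\norm{(I-P_N)\theta_{n_0}}_{L^2(\mu)}$ is large, forcing $\kappa\norm{\nabla\mathcal{S}_s\theta_{n_0}}^2 \geq \lambda_{N+1}\norm{(I-P_N)\mathcal{S}_s\theta_{n_0}}^2$ and thus the dissipation sum to be large, a contradiction. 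Iterating the resulting decay over windows of length $n_0$ produces $\ndis \leq CA/H(A)$.

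For the mixing-time bound I follow Section~\ref{s:tmixtdis}. The forward-adjoint identity of Lemma~\ref{l:eqverify} has a direct discrete counterpart, namely $\mathcal{T}^* = \mathcal{S}_{1/A}\mathcal{R}_{\varPhi^{-1}}$, under which the density ratio to $\rho_\infty$ evolves with the same energy identity. Combining this with the Nash inequality of Lemma~\ref{l:actuallyTheNashInequality} and the $L^1(\mu)$-contractivity of $\mathcal{S}_{1/A}$ yields the discrete Bernoulli inequality
\begin{equation}
\norm{\theta_{n+1}}^2_{L^2(\mu)}\leq \norm{\theta_n}^2_{L^2(\mu)} - \frac{2\kappa}{C_2 A}\frac{\norm{\theta_{n+1}}^{2(d+2)/d}_{L^2(\mu)}}{\norm{\theta_0}^{4/d}_{L^1(\mu)}}\,,
\end{equation}
which, after comparison with its ODE counterpart, reproduces the $L^1(\mu)\to L^2(\mu)$ smoothing bound of Lemma~\ref{l:L1Linf} with $\kappa t$ replaced by $\kappa n/A$. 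Duality upgrades this to an $L^1(\mu)\to L^\infty$ bound on the transition density of $Y$, and combining with the dissipation bound by iteration, exactly as in the upper-bound proof of Proposition~\ref{p:tdisTmix}, yields the claimed mixing-time estimate (with $\ndis/A$ playing the role of $\tdis$ inside the logarithm).

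The main obstacle is closing the low-frequency contradiction without any pointwise Lipschitz quantity analogous to $\norm{\nabla v}_{L^\infty}$, which drives the balance in the continuous Lemma~\ref{l:H1small}. Fortunately the self-adjointness bound above substitutes cleanly for it, depending only on the dissipation integral itself, so once the correct scale $n_0 \sim \sqrt{A/H(A)}$ is identified, the telescope closes with no smoothness assumption on $\varPhi$ at all.
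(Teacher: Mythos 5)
Your route is the one the paper itself prescribes: the paper gives no written proof of Proposition~\ref{p:NdisNmix}, stating only that it ``follows the same method as [Theorem~\ref{t:fconv}], with the continuous-time energy decay replaced with the time discrete analogs,'' deferring to Lemmas~3.1--3.2 of the cited work of Feng--Iyer. Your discrete energy identity, the substitute $\norm{\mathcal S_{1/A}\theta_k-\theta_k}^2\le 2\kappa\int_0^{1/A}\norm{\nabla\mathcal S_s\theta_k}^2\,ds$ for Lemma~\ref{l:diff}, the telescoping comparison with the pure-transport iterates $\phi_n=\theta_0\circ\varPhi^{-n}$, the adjoint identity $\mathcal T^*=\mathcal S_{1/A}\mathcal R_{\varPhi^{-1}}$ replacing Lemma~\ref{l:eqverify}, and the discrete Nash iteration all check out and correctly reproduce the $\kappa n/A$ scaling and the $\ndis/A$ inside the logarithm.

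There is, however, one concrete quantitative gap in the low-frequency contradiction. You evaluate the mixing bound only at the single time $n=n_0$ and conclude that the large value of $\norm{(I-P_N)\theta_{n_0}}_{L^2(\mu)}$ ``forces the dissipation sum to be large.'' But a single step contributes only $\kappa\int_0^{1/A}\norm{\nabla\mathcal S_s\theta_{n_0}}^2\,ds\gtrsim \lambda_{N+1}\norm{(I-P_N)\theta_{n_0}}^2/A\sim \lambda_N\norm{\theta_0}^2/A$, which falls short of your assumed threshold $\lambda_N n_0\norm{\theta_0}^2/(8A)$ by exactly the factor $n_0$ --- and, worse, the step $n_0$ lies outside the summation window $0\le k\le n_0-1$. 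The repair is the device the paper already uses in the continuous Lemma~\ref{l:H1small}, which integrates over $[t_0/2,t_0]$ rather than evaluating at $t_0$: since $h$ is decreasing, the projection bound $\norm{P_N\phi_n}_{L^2(\mu)}\le(\lambda_N/\kappa)h(n)\norm{\theta_0}_{L^2(\mu)}\le\tfrac12\norm{\theta_0}_{L^2(\mu)}$ holds for \emph{all} $n\ge n_0/2$ (after adjusting $n_0$ or the constants in~\eqref{e:dicreteH} by a factor of $2$), so $\norm{(I-P_N)\theta_n}\gtrsim\norm{\theta_0}$ for every $n$ in the upper half of the window, and summing those $\sim n_0/2$ contributions recovers the needed $\lambda_N n_0\norm{\theta_0}^2/A$ and closes the contradiction. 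With that half-window summation in place, the rest of your argument goes through.
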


The proof of Proposition~\ref{p:NdisNmix} follows the same method as Proposition~\ref{t:fconv}, with the continuous-time energy decay replaced with the time discrete analogs (see Lemmas~3.1 and~3.2 in~\cite{FengIyer19}).
For brevity, we do not present it here.
We will prove Proposition~\ref{p:DTimeMix} below.

The main idea behind the proof of Proposition~\ref{p:DTimeMix} is to construct~$\mu$-exponentially mixing diffeomorphisms as conjugates of Lebesgue exponentially mixing diffeomorphisms.
There are many examples of Lebesgue exponentially mixing diffeomorphisms, such as the baker's map or toral automorphisms~\cite{KatokHasselblatt95,SturmanOttinoEA06}.
In the time inhomogeneous case, they can also be constructed as flow maps of alternating shear flows as in Section~\ref{s:appendixmixing} or~\cite{BlumenthalCotiZelatiEA22}.
To prove Proposition~\ref{p:DTimeMix}, however, we will need a Lebesgue exponentially mixing map with mixing rate that is independent of the dimension.
While many of the examples mentioned above likely have a mixing rate that can be bounded independent of the dimension, it is easiest to prove this for an explicit toral automorphism.
Once this has been established, a direct calculation will show that the pre-factor~$D$ in~\eqref{e:DTimeMixRate} may depend on~$\kappa$, but the exponential rate~$\gamma$ does not.

\begin{lemma}\label{l:toralDDim}
	There exists a diffeomorphism~$\varPsi \colon \T^d \to \T^d$ which is Lebesgue exponentially mixing with a rate that is independent of the dimension.
\end{lemma}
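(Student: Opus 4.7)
I would build $\varPsi$ as a block-diagonal hyperbolic toral automorphism of $\T^d$, assembled from copies of the $2$-dimensional Arnold cat map (plus a single fixed $3$-dimensional hyperbolic block when $d$ is odd), and verify exponential mixing via Fourier analysis; the rate turns out to be dimension-independent because the required lower bound reduces to a single low-dimensional sub-block. Concretely, let $M_0 \in \SL_2(\Z)$ be the Arnold cat map matrix, with eigenvalues $\lambda_\pm = (3 \pm \sqrt{5})/2$ and eigenvectors $v_\pm$. The key $2$-dimensional estimate I would establish is
\begin{equation*}
|M_0^n \ell|\,|\ell| \geq c_0\,\lambda_+^{n}
\qquad \forall\, \ell \in \Z^2 \setminus \{0\},\; n \geq 0,
\end{equation*}
which follows by decomposing $\ell = \alpha_+ v_+ + \alpha_- v_-$ to obtain $|M_0^n \ell| \geq \lambda_+^{n} |\alpha_+|$, and then using the Diophantine bound $|\alpha_+| \geq c/|\ell|$ (the stable line $\R v_-$ has quadratic-irrational slope, so no nonzero $\ell \in \Z^2$ lies closer than order $1/|\ell|$ to it).

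For general $d$, writing $d = 2k + r$ with $r \in \{0,1\}$, I would set $M = M_0^{\oplus k}$ when $r = 0$, and $M = M_0^{\oplus (k-1)} \oplus M_3$ when $r = 1$, where $M_3 \in \SL_3(\Z)$ is a fixed hyperbolic matrix (for instance the companion matrix of $x^3 - x - 1$, whose roots all lie off the unit circle). The same eigenspace/Diophantine argument yields an analogous lower bound for $M_3$ with constant $c_3$ and rate $\gamma_3 > 0$; set $\gamma \defeq \min\{\log \lambda_+,\, \gamma_3\}$ and $c \defeq \min\{c_0, c_3\}$, and define $\varPsi(x) \defeq M x \pmod 1$. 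To verify exponential mixing, I would use the transformation rule $\widehat{f \circ \varPsi^{-n}}(\ell) = \hat f\bigl((M^T)^n \ell\bigr)$ together with Parseval to get
\begin{equation*}
\ip{f \circ \varPsi^{-n},\, g}_{L^2(\T^d)}
  = \sum_{\ell \neq 0} \hat f\bigl((M^T)^n \ell\bigr)\,\overline{\hat g(\ell)},
\end{equation*}
and then apply Cauchy--Schwarz with weights $|(M^T)^n \ell|^{\pm 1}$: the first resulting factor equals $\|f\|_{\dot H^1(\T^d)}^2$ (after the change of variable $k = (M^T)^n \ell$ in $\Z^d\setminus\{0\}$), and the second is bounded by $c^{-2} e^{-2\gamma n} \|g\|_{\dot H^1(\T^d)}^2$ using the block-wise inequality $|(M^T)^n \ell|\,|\ell| \geq |(M_0^T)^n \ell_i|\,|\ell_i| \geq c\,e^{\gamma n}$, obtained by applying the single-block estimate to any index $i$ with $\ell_i \neq 0$.

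The one nontrivial input is the Diophantine lower bound on $|\alpha_+|$ (and its $M_3$ analogue), but this is classical for quadratic and algebraic irrationals. The essential observation for dimension-independence is that the lower bound is only ever invoked on a single nonzero sub-block, so neither $c$ nor $\gamma$ degrades as the number of blocks grows with $d$, and the resulting rate $\gamma$ therefore depends only on the fixed matrices $M_0$ and $M_3$.
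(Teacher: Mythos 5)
Your construction and overall strategy coincide with the paper's: a block-diagonal hyperbolic automorphism of $\T^d$ built from fixed $2\times2$ and $3\times3$ blocks, a Fourier expansion of the correlation, and a Diophantine lower bound invoked on a single nonzero sub-block so that no constant degrades with the number of blocks. For even $d$ your argument is complete and in fact slightly sharper than the paper's: the weights $|(M^T)^n\ell|^{\pm1}$ in Cauchy--Schwarz give the $\dot H^1\times\dot H^1$ bound directly with the full rate $\log\lambda_+$, whereas the paper first proves an $\dot H^1\times\dot H^2$ correlation bound and then interpolates, which halves the rate.

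There is, however, a genuine gap in the odd-dimensional case. For the $3\times3$ block, ``the same eigenspace/Diophantine argument'' does not yield $|M_3^n\ell|\,|\ell|\geq c_3e^{\gamma_3 n}$. The unstable component of $\ell\in\Z^3\setminus\{0\}$ is, up to normalization, the linear form $\ell_1+\ell_2\rho+\ell_3\rho^2$ in the cubic irrational $\rho\approx1.3247$, and the classical (Liouville) lower bound for such a form is $c/|\ell|^{2}$, with exponent $2$ sharp by Dirichlet's theorem --- not $c/|\ell|$ as in the quadratic case. The direct analogue of your two-dimensional estimate therefore only gives $|M_3^n\ell|\,|\ell|^{2}\geq c\rho^{n}$, and feeding this into your Cauchy--Schwarz produces $\norm{g}_{\dot H^2}$ rather than $\norm{g}_{\dot H^1}$ in the second factor. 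Two repairs are available. (i) Use in addition that the stable eigenvalues of $M_3$ have modulus $\rho^{-1/2}$, so that $|M_3^n\ell|\geq c'\rho^{-n/2}|\ell|$ whenever the unstable component is small; combining this with the Liouville bound gives $|M_3^n\ell|\,|\ell|\geq c\,\rho^{n/2}$, a valid exponent-one bound with halved rate, after which your argument goes through verbatim. (ii) Follow the paper: work with the uniform weight $|\ell|^{2}$, which covers both block sizes since $|\ell|^{2}\geq|\ell|^{m-1}$ for $m\in\{2,3\}$, obtain $\norm{f\circ\varPsi^n}_{H^{-2}}\leq C\lambda^{-n}\norm{f}_{\dot H^1}$, and then interpolate $\norm{f\circ\varPsi^n}_{H^{-2}}^{1/2}\norm{f\circ\varPsi^n}_{L^2}^{1/2}$ to recover the $\dot H^1\times\dot H^1$ statement at the cost of a factor of two in the exponential rate.
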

\begin{proof}
	We will choose~$\varPsi$ to be a toral automorphism.
	Recall, given any $A \in \SL(d, \Z)$, a \emph{toral automorphism} with matrix~$A$ is the map~$\varPsi \colon \T^d \to \T^d$ defined by
	\begin{equation}
		\varPsi(x) = A x \pmod{ \Z^d}\,.
	\end{equation}
	The mixing properties of these maps are well known (see for instance~\cite{Lind82,FannjiangWoowski03,FengIyer19}).
	In particular, if all eigenvalues of~$A$ are irrational and at least one lies outside the unit disk, then~$\varPsi$ is Lebesgue exponentially mixing.
	To prove this, note that a Fourier series expansion immediately shows
	\begin{equation}\label{e:H2CorrelationDecay}
		\ip{f \circ \varPsi^n, g} \leq \sup_{k \in \Z^d - 0} \frac{1}{\abs{A^n k} \abs{k}^2} \norm{f}_{\dot H^1} \norm{ g }_{\dot H^2}\,,
	\end{equation}
	for all test functions~$f \in \dot H^1$, $g \in \dot H^2$ (see for instance equation~(4.7) in~\cite{FengIyer19}).
	Now using Diophantine approximation results one can show
	\begin{equation}\label{e:AnkLower}
		\abs{A^n k} \abs{k^{d-1}} \geq \frac{\abs{\lambda_1}^n}{C_d} \,,
	\end{equation}
	where~$\lambda_1$ is an eigenvalue of~$A$ with the largest modulus, and $C_d$ is a dimensional constant (see for instance the inequality immediately after (4.8) in~\cite{FengIyer19}).
	This can be used to show~$\varPsi$ is Lebesgue exponentially mixing, however, constants appearing in the mixing rate will depend on the dimension.

	We will now choose~$A$ in a specific form that will ensure the mixing rate is independent of the dimension.
	Let
	\begin{equation}
		A_1 = \begin{pmatrix} 2 & 1\\ 1 & 1 \end{pmatrix}
		\qquad\text{and}\qquad
		A_2 = \begin{pmatrix}
			2 & -1 & 0\\
			0 & 1 & 1\\
			1 & 0 & 1
		\end{pmatrix}
	\end{equation}
	and choose~$A \in \SL(d, \Z)$ to be any block diagonal matrix with only $2 \times 2$ blocks~$A_2$,  or $3 \times 3$ blocks $A_3$ on the diagonal.
	One can directly check that both~$A_2$ and~$A_3$ are ergodic toral automorphisms.
	Since the domain of~$A_m$ is~$\T^m$, for $m \in \set{2, 3}$, the lower bound~\eqref{e:AnkLower} becomes
	\begin{equation}
		\abs{A_m^n k}\abs{k}^2
		\geq
		\abs{ A_m^n k } \abs{k}^{m-1} \geq \frac{ \lambda^n }{C} \,,
		\quad\text{for all }
		m \in \set{2, 3},~
		k \in \Z^m - 0\,,
	\end{equation}
	where
	\begin{equation}
		\lambda
			= \min_{m \in \set{2, 3}} \max \set{ \abs{\mu} \st \mu \text{ is an eigenvalue of } A_m }
			> 1 \,,
	\end{equation}
	and~$C$ only depends on $A_2, A_3$ (and hence is independent of~$d$).

	Now for~$k \in \Z^d$, write $k = (k_1, \dots, k_{d'})$ where each $k_i \in \Z^{m_i}$, $m_i \in \set{2, 3}$, corresponds to the block diagonal structure of~$A$.
	If~$k \neq 0$, at least one~$k_i$ must be non-zero, and hence
	\begin{equation}
		\abs{A^n k} \abs{k}^2 \geq \abs{A_{m_i}^n k_i} \abs{k_i}^2
			\geq \frac{\lambda^n}{C} \,.
	\end{equation}
	Combined with~\eqref{e:H2CorrelationDecay} this immediately implies
	\begin{equation}
		\norm{f \circ \varPsi^n}_{H^{-2}} \leq C \lambda^{-n} \norm{f}_{\dot H^1}\,.
	\end{equation}
	By H\"older's inequality
	\begin{align}
		\ip{f \circ \varPsi, g}
			&\leq \norm{f \circ \varPsi}_{H^{-2}}^{1/2} \norm{f\circ \varPsi}_{L^2}^{1/2} \norm{g}_{\dot H^1}
			\leq \sqrt{C} \lambda^{-n/2} \norm{f}_{\dot H^1}^{1/2} \norm{f}^{1/2}_{L^2} \norm{g}_{\dot H^1}
		\\
			&\leq \sqrt{C} \lambda^{-n/2} \norm{f}_{\dot H^1} \norm{g}_{\dot H^1}\,,
	\end{align}
	showing~$\varPsi$ is Lebesgue exponentially mixing with rate~$\sqrt{C} \lambda^{-n/2}$.
	Since~$C$ and~$\lambda > 1$ are independent of~$d$, this concludes the proof.
\end{proof}

\begin{proof}[Proof of Proposition~\ref{p:DTimeMix}]
	Let~$\varPsi \colon \T^d \to \T^d$ be the Lebesgue exponentially mixing diffeomorphism from Lemma~\ref{l:toralDDim}.
	Notice that this is completely independent of~$\kappa$, and neither~$D'$, $\gamma$, nor~$\varPsi$ depend on~$\kappa$.

	Let~$\psi \colon \T^d \to \T^d$ be a diffeomorphism such that the push forward of the measure~$\mu$ under~$\psi$ is the Lebesgue measure (i.e.\ for all Borel sets~$A$ we have $\mu(\psi^{-1}(A)) = m(A)$, where~$m$ is the Lebesgue measure).
	One can, for instance, prove the existence of such a map using optimal transport.
	We claim that
	\begin{equation}
		\varPhi = \psi^{-1} \varPsi \psi\,,
	\end{equation}
	is a~$\mu$-exponentially mixing map with exponential rate~$\gamma$.
	To see this, we note first that clearly~$\varPhi$ preserves the measure~$\mu$.
	Moreover, for any pair of test functions~$f, g \in \dot H^1(\mu)$, we have
	\begin{align}
		\ip{ f\circ \varPhi^n, g}_\mu
			= \ip{ f\circ (\psi^{-1} \varPsi^n \psi), g }_\mu
			= \ip{ (f\circ \psi^{-1}) \varPsi^n, g\circ \psi^{-1} }\,.
	\end{align} 
	Since $f, g$ have $\mu$-mean zero, the functions~$f \circ \psi^{-1}$ and~$g\circ \psi^{-1}$ must be Lebesgue mean-zero.
	Since~$\varPsi$ is Lebesgue exponentially mixing, this implies
	\begin{align}
		\ip{ (f\circ \psi^{-1}) \varPsi^n, g\circ \psi^{-1} }
			&\leq D' e^{-\gamma n} \norm{f\circ \psi^{-1}}_{\dot H^{1}}\norm{g\circ \psi^{-1}}_{\dot H^1}
		\\
			&\leq D' \norm{ \grad \psi^{-1}}_{L^\infty}^2 e^{-\gamma n} \norm{f}_{\dot H^1(\mu)}\norm{g}_{\dot H^1(\mu)}\,,
	\end{align}
	where we clarify that
	\begin{equation}
		\norm{\grad \psi^{-1}}_{L^\infty} = \norm[\Big]{ \sum_{i,j} \abs{ \partial_i \psi_j^{-1}  }^2 }_{L^\infty}^{1/2}\,.
	\end{equation}
	Hence $\varPhi$ is $\mu$-exponentially mixing with rate
	\begin{equation}\label{e:MixRateGradPsi}
		h(n) = D' \norm{\grad \psi^{-1}}_{L^\infty}^2 e^{-\gamma n}\,.
	\end{equation}
	This proves the first assertion of Proposition~\ref{p:DTimeMix}.
	\smallskip

	To prove the second assertion, note from~\eqref{e:MixRateGradPsi}, that the pre-factor~$D$ is bounded by
	\begin{equation}
		D \leq D' \norm{\grad \psi^{-1}}_{L^\infty}\,,
	\end{equation}
	where~$\psi$ is any diffeomorphism that pushes forward~$\mu$ onto the Lebesgue measure.
	When~$d = 1$ such maps are characterized by
	\begin{equation}
		\partial_x \psi^{-1} = \frac{e^{-U / \kappa}}{Z}\,.
	\end{equation}
	When~$d > 1$ and~$U$ is in the form~\eqref{e:seppot}, we can construct~$\psi$ using the one dimensional maps described above.
	Explicitly, define~$\tilde \varphi\colon \R \to \R$ by
	\begin{equation}
		\tilde \varphi (x) = \frac{1}{\tilde Z} \int_0^x e^{-\tilde U(y) / \kappa} \, dy\,,
		\quad\text{where}\quad
		\tilde Z \defeq \int_0^1 e^{-U(y) / \kappa} \, dy\,.
	\end{equation}
	Since~$U$ is $1$-periodic  we note~$\tilde \varphi(x + 1) = 1 + \tilde \varphi(x)$, and hence~$\tilde \varphi$ can be viewed as a map on the one dimensional torus.
	We now define~$\psi^{-1} \colon \T^d \to \T^d$ by
	\begin{equation}
		\psi^{-1}(x) = ( \tilde \varphi(x_1), \tilde \varphi(x_2), \dots, \tilde \varphi(x_d) )\,.
	\end{equation}
	Clearly the push forward of~$\mu$ under~$\psi$ is the Lebesgue measure, and hence
	\begin{equation}
		D \leq D' \norm{\grad \psi^{-1}}_{L^\infty}
			\leq
			\frac{D' \sqrt{d}}{ \inf_{x \in [0, 1]} \int_0^1 \exp\paren[\Big]{\frac{1}{\kappa}(\tilde U(y) - \tilde U(x))} \, dy }\,.
	\end{equation}
	This proves~\eqref{e:DBound}, concluding the proof.
\end{proof}

\section{Proof of exponential mixing of sawtooth shears (Theorem~\ref{t:SinMix}).}\label{s:modifiedmixing}
The objective of this section is to show that the modified shears in~\eqref{e:vDefAltShear} are exponentially mixing with probability~$1$, as stated in Theorem~\ref{t:SinMix}.
The proof involves the analysis of geometric ergodicity of a pair of trajectories of the velocity field~$v$.
This study was initiated in~\cite{BaxendaleStroock88} and further developed in~\cite[Theorem 4]{DolgopyatKaloshinEA04},~\cite[Theorem 1.3]{BedrossianBlumenthalEA22}, and~\cite[Theorem 1.1]{BlumenthalCotiZelatiEA22}. Among these results our proof is closest to Theorem~1.3 in~\cite{BedrossianBlumenthalEA22} and differs from Theorem~1.3 in~\cite{BedrossianBlumenthalEA22} only in one aspect. The proof in ~\cite{BedrossianBlumenthalEA22} uses H\"{o}rmander's condition to obtain irreducibility and a positive Lyapunov exponent of underlying Markov processes.
We cannot use H\"{o}rmander's condition in our context.
Instead, we use the Rashevsky--Chow Theorem~\cite[Theroem 5]{Sachkov21} (see Theorem~\ref{T: chow}, below) to obtain the same results. 
\subsection{Modified Sawtooth Shears in Two Dimensions}\label{s:modifiedTentConstruction}
We will first prove that the modified, randomly shifted, sawtooth shears are almost surely exponentially mixing in two dimensions.
Following this we will prove the remaining conclusions of Theorem~\ref{t:SinMix}.

We begin by writing down the function~$F$ with the sawtooth shaped derivative (shown in Figure~\ref{f:sawTooth}).
Define
\begin{equation}\label{e:psi0}
	F(x) = F_0(x) \defeq
	\begin{dcases}
		2x^2 & x\in[0, \tfrac{1}{4}] \\
		-2 (x - \tfrac{1}{4}) (x - \tfrac{3}{4} ) + \tfrac{1}{8} & x\in[ \tfrac{1}{4}, \tfrac{3}{4}]\\
		2(x-1)^2 & x\in[ \tfrac{3}{4}, 1]\,,
	\end{dcases} 
\end{equation}
and extended periodically to $x \in \R$ (see Figure~\ref{f:sawTooth}, right).
For $\alpha\in[0, 1]$ we define $F_\alpha$ by 
\begin{equation}\label{e:psiDef}
	F_{\alpha}(x) \defeq F_0(x-\alpha)\,.
\end{equation}

Given~$F_\alpha$, we define the associated velocity fields~$v_\alpha$ using~\eqref{e:vDef} by replacing~$F$ with $F_\alpha$.
Explicitly, we define
\begin{gather}
	\label{e:valpha1}
	v_{\alpha, 1}=
	\frac{1}{p}\grad^\perp(p F_\alpha(x_1))
	=
	\frac{1}{\kappa}
	\begin{pmatrix}
		U_{x_2} F_\alpha(x_1)\\
		\kappa F'_\alpha(x_1) - U_{x_1} F_\alpha(x_1)
	\end{pmatrix}
	\,,
	\\
	\label{e:valpha2}
	v_{\alpha, 2}=
	\frac{1}{p} \grad^\perp (p F_\alpha(x_2) )
	= \frac{1}{\kappa}\begin{pmatrix}
		U_{x_2} F_\alpha(x_2)-\kappa F'_\alpha(x_2)\\
		-U_{x_1} F_\alpha(x_2)
	\end{pmatrix}
	\,,
\end{gather}
where~$\grad^\perp = \grad^\perp_{1, 2}$ is the skew gradient in two dimensions.
Stream plots of this flow (for $U$ given by~\eqref{e:Sinpotential}) are shown in Figure~\ref{f:stream}.

For notational convenience, define~$V_n = v_n$, where~$v$ defined by~\eqref{e:vDefAltShear}.
Note
\begin{equation}\label{e:VnDef}
	V_n = \beta_n v_{\alpha_n, i_n}\,,
\end{equation}
where $(\alpha_n, \beta_n, i_n)$ are i.i.d.\ random variables that are uniformly distributed on the parameter space $[0, 1]\times [0, 1] \times \set{1, 2}$.
We will show that the paths of the random flow obtained by composing the time $1$ flows of each of the vector fields~$V_1$, \dots, $V_n$ are almost surely exponentially mixing.

\subsection{Conditions Guaranteeing Exponential Mixing}
To prove that the paths of the random flow above is exponentially mixing, it will be convenient to use Theorem~1.4 from~\cite{BlumenthalCotiZelatiEA22}.
For clarity of presentation we introduce the required preliminaries and restate this result below.

Consider the Markov process~$X$ defined by
\begin{equation}\label{e:AlexsGreatestEquation}
	X_{n+1}=\varphi^{V_{n+1}}(X_{n}) \,.
\end{equation}
Here the notation $\varphi^v$ denotes time $1$ flow map of the vector field $v$, and the vector fields~$V_n$ were defined in~\eqref{e:VnDef}.
Define the random flow~$\phi_n$ by
\begin{equation}\label{e:phidef}
	\phi_n \defeq \varphi^{V_n} \circ \varphi^{V_{n-1}} \circ \cdots \varphi^{V_1}.
\end{equation}

Let $\Pro$ be the projective space of $\R^2$, that is the collection of lines in $\R^2$ that pass through the origin. 
For initial condition $(x, u)\in \T^2\times\Pro$ the projective process on $\T^2\times \Pro$ is defined by 
\begin{equation}\label{e: projpros}
	(X_n, U_n)= \paren[\Big]{ \phi_n(x), \frac{ D_x \phi_n u }{\abs{ D_x \phi_n u }} } \, .
\end{equation}
Given an initial condition $(x, g)\in \T^2\times \SL_2(\R)$ the rescaled derivative process on $\T^2\times \SL_2(\R)$ is defined by
\begin{equation}\label{e: derivpros}
	(X_n, A_n)=\left(\phi_n(x), \frac{D_x \phi_n g}{(\det D_x \phi_n)^{1/2}}\right).
\end{equation}
For initial condition $(x, y)\in \mathcal{D}^{\comp} \defeq \T^2\times\T^2-\{(x, x)\}$ the two point process $(X_n, Y_n)$ on $\mathcal{D}^{\comp}$ is defined by
\begin{equation}\label{def: 2pointp}
	(X_n, Y_n)=(\phi_n(x), \phi_n(y)).
\end{equation}
\begin{remark}\label{r:lip}
	Since each velocity field is Lipschitz, each flow is Lipschitz by Gr\"{o}nwall's inequality. That is,
	\begin{equation}\label{e:lip}
		\vert \varphi^v_t(x)-\varphi^v_t(y) \vert \leq \vert x - y \vert e^{t C_v}
	\end{equation}
	when $x$ and $y$ are sufficiently close. Thus, each flow is differentiable almost everywhere and the processes, ~\eqref{e: projpros} and ~\eqref{e: derivpros}, are well defined. 
\end{remark}

Theorem~1.4 in~\cite{BlumenthalCotiZelatiEA22} can now be stated as follows.
\begin{proposition}[Theorem $1.4$ in~\cite{BlumenthalCotiZelatiEA22}]\label{T:micheleMixing} 
	Assume the following conditions:
	\begin{enumerate}\reqnomode
		\item
			The one point process, two point process and the projective process are all aperiodic.
		\item
			The one point process has a positive Lyapunov exponent.
	  \item
			The one point process~\eqref{e:AlexsGreatestEquation} and the projective process~\eqref{e: projpros} are uniformly geometrically ergodic.
			Let $\mu$ be the unique invariant measure of the one point process.
		\item
			The two point process~\eqref{def: 2pointp} has a Lyapunov function~$\mathcal V \in L^1( \T^4, \mu \times \mu )$ of the form
			\begin{equation}\label{e: lyapunovfunctionform}
				\mathcal{V}(x, y)=d(x, y)^{-p}\chi(x, y) \hbox{ for some small } p>0 \,,
			\end{equation}
			where $\chi(x,y)$ is a continuous function which is bounded both from above and away from $0$.
		\item The two point process is $\mathcal{V}$-geometrically ergodic.
	\end{enumerate}
	Then there exists a deterministic constant $\gamma>0$ and a random constant $D(\omega)$ such that for all mean-zero functions $f, g\in H^1(\T^2, \mu)$ we have that
	\[
	\ip{ f\circ \phi_{n}, g }_\mu \leq D(\omega) e^{-\gamma n} \norm{f}_{\dot H^1} \norm{g}_{\dot H^1}
	\]
	almost surely.
\end{proposition}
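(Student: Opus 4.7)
The plan is to reduce the correlation decay to a statement about the two-point Markov process, exploit its $\cV$-geometric ergodicity to control pairwise expectations, and finally pass from an $L^2(\Omega)$ bound to a pathwise bound via Borel--Cantelli. The entire argument is driven by the identity that converts a \emph{squared} random correlation into an integral of two-point process expectations, which is where the assumptions on $(X_n,Y_n)$ are brought to bear.

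For fixed mean-zero $f,g\in H^1(\T^2,\mu)$, set $Z_n \defeq \ip{f\circ\phi_n,g}_\mu$. The first step is a Fubini computation:
\begin{equation*}
	\EX[Z_n^2]=\iint_{\T^2\times\T^2} g(x)g(y)\,\EX\bigl[f(\phi_n x)f(\phi_n y)\bigr]\,d\mu(x)\,d\mu(y).
\end{equation*}
The inner expectation is exactly $(P_{2\mathrm{pt}}^n F)(x,y)$ with $F(x,y)=f(x)f(y)$, where $P_{2\mathrm{pt}}$ is the transition operator of the two-point process~\eqref{def: 2pointp}. Since $\phi_n$ preserves $\mu$, the invariant measure of $P_{2\mathrm{pt}}$ projects to $\mu$ on each factor, so $\int F\,d\nu=(\int f\,d\mu)^2=0$. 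Invoking $\cV$-geometric ergodicity, one obtains
\begin{equation*}
	\bigl|\EX[f(\phi_n x)f(\phi_n y)]\bigr|\leq C e^{-\gamma n}\,\cV(x,y)\,\|F\|_\ast
\end{equation*}
for a suitable observable norm $\|\cdot\|_\ast$ (typically weighted sup or a weighted H\"older norm in this line of work). The crucial step is then to check that $\|F\|_\ast\lesssim \|f\|_{\dot H^1}^2$; in two dimensions this uses Sobolev embedding $H^1\hookrightarrow L^q$ for all $q<\infty$, which is sufficient because the Lyapunov weight $\cV(x,y)=d(x,y)^{-p}\chi(x,y)$ has only a mild power singularity with $p$ arbitrarily small.

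Integrating the previous bound against $g\otimes g$ and using $\cV\in L^1(\mu\otimes\mu)$ together with H\"older (exploiting the same Sobolev embedding) yields
\begin{equation*}
	\EX[Z_n^2]\leq C e^{-\gamma n}\|f\|_{\dot H^1}^2\|g\|_{\dot H^1}^2.
\end{equation*}
Chebyshev and Borel--Cantelli applied to any fixed $(f,g)$ deliver pathwise exponential decay at rate slightly smaller than $\gamma/2$. To turn this into the desired uniform statement with a single random constant $D(\omega)$, the standard device is to fix a countable dense set $\cF\subset \{h\in\dot H^1 : \|h\|_{\dot H^1}\le 1\}$ (for instance a truncated Fourier basis together with rational combinations), apply Borel--Cantelli simultaneously on $\cF\times\cF$, and then extend to all of $\dot H^1\times\dot H^1$ by bilinearity and continuity (the map $(f,g)\mapsto \ip{f\circ\phi_n,g}_\mu$ is bounded as a bilinear form on $L^2(\mu)$, hence continuous).

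The main obstacle is the verification that $\|F\|_\ast \lesssim \|f\|_{\dot H^1}^2$ holds in whatever functional framework the $\cV$-geometric ergodicity is formulated: if the ergodicity controls only bounded observables, one must either upgrade it to weighted $L^q$-type observables through duality or perform an additional mollification/truncation and absorb the error. A secondary subtlety is ensuring that the uniform Borel--Cantelli upgrade produces a constant $D(\omega)$ that is finite almost surely; this is where having the decay rate in $\EX[Z_n^2]$ strictly faster than any polynomial bound on the size of the approximating net in $\cF$ becomes essential.
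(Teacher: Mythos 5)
The paper does not prove Proposition~\ref{T:micheleMixing}; it imports it verbatim as Theorem~1.4 of~\cite{BlumenthalCotiZelatiEA22} and the body of the paper only verifies its five hypotheses for the shear flows of interest. There is therefore no in-paper proof to compare your argument against.

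As a reconstruction of the argument in the cited source (which builds on~\cite{BedrossianBlumenthalEA22}), your sketch does capture the central mechanism: the Fubini identity turning $\E\bigl[|\ip{f\circ\phi_n,g}_\mu|^2\bigr]$ into a functional of the two-point transition operator, and the use of $\mathcal{V}$-geometric ergodicity together with $\mathcal{V}\in L^1(\mu\times\mu)$ to force exponential decay in mean square. The step that does not close as written is the passage from per-pair almost-sure decay to a single random constant $D(\omega)$ valid \emph{simultaneously} for all mean-zero $f,g\in\dot H^1$. Running Borel--Cantelli pairwise over a countable dense set produces a countable family of finite random constants $D_{f,g}(\omega)$ with no uniform control over the family, and your closing remark about the decay ``beating any polynomial bound on the size of the approximating net'' suggests a finite covering, whereas the net here is infinite. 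The standard way to extract one $D(\omega)$ is to estimate the Hilbert--Schmidt norm of the random operator $f\mapsto f\circ\phi_n-\int f\,d\mu$ from $\dot H^1$ into $H^{-1}$: expand in a Fourier basis, apply the two-point estimate to each matrix entry, and use the summability of the Fourier weights on $\T^2$ to get $\E\bigl[\|\cdot\|_{\mathrm{HS}}^2\bigr]\lesssim e^{-\gamma n}$, after which a single Borel--Cantelli applied to the Hilbert--Schmidt norm yields a random constant that is uniform in $f$ and $g$ by construction. Your mean-square estimate is the right input for this, but the uniformity layer is missing as written.
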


We now briefly recall the notions used in Proposition~\ref{T:micheleMixing}.
A \emph{Lyapunov exponent} is the asymptotic rate at which tangent vectors are stretched or shrunk under the iterates of a dynamical system.
Precisely, they are limits of the form
\[
  \lim_{n \to \infty} \frac{ \ln \norm{ D_x \phi_n w } }{n}
\]
for~$x \in \T^2$ and a tangent vector.
A general theory of Lyapunov exponents for random dynamical systems can be found in e.g.~\cite[Chapter 3]{Arnold98}.

A \emph{Lyapunov function} for a Markov process with state space $E$ is a function 
\begin{equation}\label{e: lyapunovfunction}
	\mathcal{V}:E\to[1, \infty)  \hbox{ such that } \EX^x [\mathcal{V}(X_1)]\leq \lambda V(x)+C
\end{equation}
for some constants $0<\lambda<1$, $C>0$.
A Markov process is~$\mathcal V$-geometrically ergodic if there exists $\gamma>0$ and a unique invariant distribution, $\mu$, such that
\begin{equation}\label{e: Vgeomerg}
	\norm{P^n(x, \cdot)-\mu}_{TV}\leq \mathcal{V}(x) e^{-n \gamma} \hbox{ for all } x\in E.
\end{equation}
The process is said to be \emph{uniformly geometrically ergodic} if the function~$\mathcal V$ in~\eqref{e: Vgeomerg} is constant,
It is well known (see for instance~\cite[Theorem~15.0.1]{MeynTweedie09}) that if the process is aperiodic, irreducible and there exists a Lyapunov function $\mathcal{V}$ with compact sub-level sets then he process is $\mathcal{V}$-geometrically ergodic.

\begin{remark}
The state space of the two point process is not compact. Thus, showing geometric ergodicity is not immediate. We take the Lyapunov function of the two point process to be a specific perturbation of the principle eigenfunction of the projective process, see~\cite[Section 5]{BedrossianBlumenthalEA22}. In this case equation~\eqref{e: lyapunovfunction} states that two particles which are close together move away from each other on average. For more information on geometric ergodicity of Markov processes with non-compact state spaces see~\cite[Chapter 15]{MeynTweedie09}.
\end{remark}	

Note that the processes~\eqref{e:AlexsGreatestEquation},~\eqref{e: projpros}, and~\eqref{def: 2pointp} are all aperiodic.
Indeed, for all $z$ in the processes' state space and every $\epsilon > 0$, the flow of any vector field~$v$ with sufficiently small amplitude will stay inside an~$\epsilon$-ball centered at its initial position.
This will ensure $\P(z, B(z, \epsilon))>0$, showing that the processes~\eqref{e:AlexsGreatestEquation},~\eqref{e: projpros}, and~\eqref{def: 2pointp} are all aperiodic.
Therefore to prove a sequence of randomly shifted modified sawtooth shears are mixing we must show the following.
	\begin{enumerate}
	\item The processes~\eqref{e:AlexsGreatestEquation} and~\eqref{e: projpros} are uniformly geometrically ergodic.
	\item The existence of a positive Lyapunov exponent for~\eqref{e:AlexsGreatestEquation}.
	\item Existence of a Lyapunov function as in equation \eqref{e: lyapunovfunctionform}.
	\item The process~\eqref{def: 2pointp} is $\mathcal{V}$-geometrically ergodic with respect to a function of the form~\eqref{e: lyapunovfunctionform}.
\end{enumerate}   

To prove the first item it suffices to prove the processes,~\eqref{e:AlexsGreatestEquation} and~\eqref{e: projpros}, are irreducible and Feller (see for example Theorem~15.0.1 in~\cite{MeynTweedie09}).
We prove irreducibility in Lemma~\ref{kappa_deriv} using the Rachevsky--Chow theorem, and prove the Feller condition in Lemma~\ref{L: mufeller}.
The second item follows from irreducibility, and is shown in Lemma~\ref{l: poslyaexpon} below.
The third item follows from~\cite{BedrossianBlumenthalEA22} Section $5$, since the projective process is irreducible and uniformly geometrically ergodic.
Finally, the fourth item follows from section~$6$ in~\cite{BedrossianBlumenthalEA22},  and the fact that both the two point process~\eqref{e:AlexsGreatestEquation} and the projective process~\eqref{e: projpros} are irreducible and Feller.
\begin{remark}
	In \cite{BlumenthalCotiZelatiEA22}, the authors needed one more condition to show $\mathcal{V}$-geometric ergodicity. The condition was on the existence of an open so called small set \cite[Chapter 5]{MeynTweedie09}. Theorem~5.5.7 and Proposition~6.2.8  in~\cite{MeynTweedie09} imply that compact sets are small when the process is irreducible, aperiodic, and Feller. Thus, any small enough open ball is an open small set. 
\end{remark}


\subsection{Irreducibility}
	In order to prove irreducibility, we will use a controllability result of Rachevsky~\cite{Rashevskii38} and Chow~\cite{Chow40}, which shows that if a collection of vector fields satisfies a H\"ormander condition, then any two points are connected by a composition of flows.
	\begin{theorem}[Rashevsky--Chow~(Theorem 5 in~\cite{Sachkov21})]\label{T: chow} 
		Let $M$ be a smooth connected manifold, and $\mathcal F$ a collection of vector fields on $M$. Suppose that for every $x\in M$, the Lie algebra~$\operatorname{Lie}_x(\mathcal F)$ spans the tangent space $T_x M$.
		Then for every $x, y\in M$ there exists a sequence of times $t_1, t_2,\dots t_n$ and vector fields $v_1, v_2,\dots v_n\in\mathcal{F}$ with flows $\varphi^i$ such that 
		\[
		\varphi_{t_n}^n(\varphi_{t_{n-1}}^{n-1}(\cdots(\varphi^1_{t_1}(x))\cdots))=y.
		\]
	\end{theorem}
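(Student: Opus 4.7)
The plan is to prove the Rashevsky--Chow theorem by the classical orbit argument: fix $x_0 \in M$, define the reachable set (orbit) $\mathcal{O}(x_0)$ consisting of all points of the form $\varphi^{v_n}_{t_n} \circ \cdots \circ \varphi^{v_1}_{t_1}(x_0)$ with $v_i \in \mathcal{F}$ and $t_i \in \R$, and show that $\mathcal{O}(x_0)$ is both open and closed in $M$. Since $M$ is connected, this forces $\mathcal{O}(x_0) = M$, which is the statement we want. Closedness (in the sense that two distinct orbits are disjoint) is immediate from the fact that composing flows in $\mathcal{F}$ sends orbits to themselves; the real work is in establishing openness.

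The main obstacle is that the vector fields in $\mathcal{F}$ need not span $T_{x_0} M$ on their own --- we must recover the missing directions from iterated Lie brackets. The key tool is the well-known flow--commutator expansion, which I would use in the form
\begin{equation*}
\varphi^Y_{-s} \circ \varphi^X_{-s} \circ \varphi^Y_{s} \circ \varphi^X_{s}(x) = x + s^{2}[X,Y](x) + O(s^{3}),
\end{equation*}
for $X, Y \in \mathcal{F}$ and small $s$. Iterating this construction (and using the fact that the inverse of a flow $\varphi^v_t$ is $\varphi^v_{-t}$, so negative times are allowed) produces, for each iterated bracket $w$ of length $k$ in $\operatorname{Lie}(\mathcal{F})$, a continuous family of concatenated flows whose time-$s$ trajectory approximates $s^k$-time motion along $w$.

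To conclude openness at $x_0$, I would invoke the bracket-generating hypothesis to pick $w_1, \dots, w_d \in \operatorname{Lie}_{x_0}(\mathcal{F})$ that span $T_{x_0} M$, then define a map $\Psi \colon U \subset \R^d \to M$ by composing the approximating concatenated flows for each $w_j$ with the appropriate time rescalings $s_j \mapsto s_j^{1/k_j}$. A direct Taylor computation shows that $d\Psi|_0$ sends the standard basis to $w_1, \dots, w_d$ and is therefore a linear isomorphism onto $T_{x_0} M$. The inverse function theorem then guarantees that $\Psi$ is a local homeomorphism onto a neighborhood of $x_0$, all of whose points lie in $\mathcal{O}(x_0)$ by construction.

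Finally, since left-composition with a flow $\varphi^v_t$ is a homeomorphism of $M$ that carries $\mathcal{O}(x_0)$ to itself, the same local argument applied at an arbitrary $y \in \mathcal{O}(x_0)$ shows $\mathcal{O}(x_0)$ contains a neighborhood of $y$; hence it is open. The complement is a union of other orbits, each open by the identical argument, so $\mathcal{O}(x_0)$ is also closed. Connectedness of $M$ yields $\mathcal{O}(x_0) = M$. The technically most delicate point is the careful justification of the $O(s^3)$ remainder in the iterated flow--commutator formula, and the combinatorial bookkeeping needed to interpret a length-$k$ iterated bracket as a concrete concatenation of $O(2^k)$ flow pieces with time parameter $s$; everything else is a standard application of the inverse function theorem.
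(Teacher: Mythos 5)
The paper does not actually prove this statement -- it is imported verbatim as Theorem~5 of the cited reference [Sachkov21] -- so your proposal is being measured against the standard literature proof rather than anything in the text. Your overall strategy (show the orbit $\mathcal{O}(x_0)$ is open, observe that its complement is a union of orbits and hence open, conclude by connectedness) is exactly the classical argument, and the closedness-plus-connectedness part is fine. Note also that because the theorem allows arbitrary real times $t_i$, inverses of flows are available, which you correctly use.

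The genuine gap is in the openness step, and it is not where you locate it. After the rescalings $s_j \mapsto s_j^{1/k_j}$ the map $\Psi$ is \emph{not} $C^1$ near the origin: for $k_j \geq 2$ the function $s \mapsto \lvert s\rvert^{1/k_j}$ fails to be differentiable at $0$, and the composition $\varphi^Y_{-s}\circ\varphi^X_{-s}\circ\varphi^Y_{s}\circ\varphi^X_{s}$ only produces motion in the $+s^2[X,Y]$ direction (since $s^2 \geq 0$), so to realize $-[X,Y]$ you must switch the ordering according to the sign of $s_j$, making $\Psi$ only piecewise smooth. Hence $d\Psi|_0$ exists at best in the pointwise Fr\'echet sense, and the classical inverse function theorem -- which requires $C^1$ regularity in a neighborhood -- does not apply. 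This, rather than the $O(s^3)$ remainder, is the well-known delicate point of Chow's theorem. It can be repaired in any of three standard ways: (i) replace the inverse function theorem by a topological openness argument (Brouwer degree or invariance of domain applied to a continuous map satisfying $\Psi(s) = x_0 + \sum_j s_j w_j + o(\lvert s\rvert)$ with $(w_j)$ a basis of $T_{x_0}M$); (ii) apply the smooth inverse function theorem at a point $\hat s \neq 0$ where the composed map is genuinely smooth and of full rank, and then use invariance of reachable sets under the flows to transport the resulting open set back to $x_0$ (this is the route taken in Agrachev--Sachkov); or (iii) invoke the Orbit Theorem of Sussmann and Stefan, which makes the orbit an immersed submanifold whose tangent space contains $\operatorname{Lie}_x(\mathcal F)$, so that bracket generation forces the orbit to be open. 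Without one of these repairs the proof as written does not close.
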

	Recall that Lie algebra~$\operatorname{Lie}_x \mathcal F$ is defined by
	\begin{equation*}
		\operatorname{Lie}_x \mathcal F=\operatorname{span} \paren[\Big]{ \bigcup\nolimits_{n}\set{v(x) \st v\in\mathcal{F}_n}}\,,
	\end{equation*}
	where $\mathcal{F}_0= \set{v}_{v\in \mathcal{F}}$, and~$\mathcal F_{n}$ is defined inductively by
	\begin{equation}
		\mathcal{F}_{n+1} \defeq \mathcal{F}_{n}\cup\set{[v_1, v_2] \st v_1\in\mathcal{F}_{n},~v_2\in \mathcal{F}}\,.
	\end{equation}
	Here the notation $[v_1, v_2]$ denotes the Lie bracket of two vector fields $v_1$, $v_2$ on a smooth manifold $M$.
	Recall the Lie bracket is the derivative of $v_2$ along the flow of~$v_1$, and can be computed by
	\[
	[v_1, v_2]= \operatorname{D}_{v_1} v_2 - \operatorname{D}_{v_2}v_1\,,
	\]
	where $\operatorname{D}_v$ is the directional derivative in the direction of $v$.

	We will now use Theorem~\ref{T: chow} to prove irreducibility.

	\begin{lemma}\label{kappa_deriv}
		The one point process~\eqref{e:AlexsGreatestEquation}, the projective process~\eqref{e: projpros}, the rescaled derivative process~\eqref{e: derivpros}, and the two point process~\eqref{def: 2pointp} are all irreducible. 
	\end{lemma}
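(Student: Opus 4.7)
Plan: the irreducibility of each process will follow from the Rashevsky--Chow Theorem (Theorem \ref{T: chow}), once we verify that at every point $z$ of the phase space the Lie algebra generated by the (appropriately lifted) admissible family of vector fields spans the full tangent space $T_zM$. Since $(\alpha_n, \beta_n)$ is uniform on $[0,1]^2$ and $\beta_n$ acts as a time-rescaling, we may work directly with the family $\mathcal F = \{v_{\alpha, i}\}_{\alpha \in [0,1], i \in \{1,2\}}$, lifted canonically to each phase space.

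For the one-point process on $\T^2$, set $a_j = x_j - \alpha_j \pmod 1$; a direct computation from \eqref{e:valpha1}--\eqref{e:valpha2} gives
\[
\det\bigl[v_{\alpha_1, 1}(x),\, v_{\alpha_2, 2}(x)\bigr] = F_0'(a_1) F_0'(a_2) - \tfrac{1}{\kappa}\bigl( U_{x_2}(x) F_0'(a_1) F_0(a_2) + U_{x_1}(x) F_0(a_1) F_0'(a_2) \bigr).
\]
For each fixed $x$ this is a nontrivial function of $(a_1, a_2) \in (0, 1/2)^2$ (where $F_0, F_0' > 0$), and by independently varying the four quantities $F_0(a_j), F_0'(a_j)$ it is nonzero on an open subset. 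Hence the family $\mathcal F$ alone spans $T_x\T^2$, and Rashevsky--Chow gives irreducibility.

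For the projective process on $\T^2 \times \Pro$ and the rescaled derivative process on $\T^2 \times \SL_2(\R)$, lift each $v_{\alpha, i}$ by appending its variational dynamics. The base-projection argument from the one-point step already produces spanning of $T_x \T^2$, while continuous variation of the Jacobians $Dv_{\alpha, i}(x)$ through independent combinations of $F_0, F_0', F_0''$ supplies the missing fiber directions in the $3$- or $5$-dimensional tangent space; explicit computation shows enough distinct Jacobians exist at every point.

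The two-point process on $\mathcal D^\comp$ is the most delicate step. Because Lie brackets of the diagonal lifts $\tilde v(x, y) = (v(x), v(y))$ satisfy $[\tilde v_1, \tilde v_2] = \widetilde{[v_1, v_2]}$, the spanning condition at $(x, y)$ reduces to exhibiting four vector fields in $\operatorname{Lie}(\mathcal F)$ whose evaluations $(v_k(x), v_k(y)) \in \R^4$ are linearly independent. When $x_1 \neq y_1$, choose $\alpha$ near $y_1$ so that $F_\alpha(y_1) \approx 0$ while $F_\alpha(x_1)$ is bounded below (using that $F_0$ has isolated zeros); this produces $v_{\alpha, 1}$-vectors that effectively decouple $x$ and $y$, and combining two such vectors with two analogous $v_{\alpha, 2}$-vectors yields a full-rank $4\times 4$ system. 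The case $x_1 = y_1$ (so $x_2 \neq y_2$) is handled symmetrically via the $v_{\alpha, 2}$-family. The main obstacle is ensuring this separation argument works uniformly over all of $\mathcal D^\comp$ and ruling out pathological configurations where the $U$-dependent correction terms in \eqref{e:valpha1}--\eqref{e:valpha2} conspire with the $F_\alpha$ factors to collapse the rank; handling these degenerate configurations cleanly is the technically demanding part of the proof.
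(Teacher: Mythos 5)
Your overall framework is the same as the paper's: Rashevsky--Chow plus a Lie-algebra spanning condition, and your reduction of the two-point case to exhibiting four fields $v_k\in\operatorname{Lie}(\mathcal F)$ with $(v_k(x),v_k(y))$ linearly independent in $\R^4$ is exactly the right reduction (the paper likewise deduces the one-point case from the two-point case, though your direct determinant computation is a fine substitute there). The problem is that the two verifications carrying essentially all of the content are not done, and the mechanism you propose for the first one does not work for the sawtooth profile. You suggest choosing $\alpha$ so that $F_\alpha(y_1)\approx 0$ while $F_\alpha(x_1)$ stays bounded below, ``using that $F_0$ has isolated zeros.'' For the profile \eqref{e:psi0}, $F_\alpha$ vanishes only at the single point $\alpha$, and there $F'_\alpha$ vanishes as well (quadratic zero), so exact decoupling forces $\alpha=y_1$ and produces exactly one vector $v_{y_1,1}(x)$ per shear direction with zero $y$-component; there is no interval of admissible $\alpha$'s, no guarantee the two resulting vectors at $x$ are independent, and ``approximate'' decoupling does not feed into the exact linear-algebra condition. (Your argument is really the one used in Appendix~\ref{s:appendixmixing} for the \emph{localized} tent profile \eqref{e:localTent}, where $F_\alpha(y_1)=0$ holds on an open set of $\alpha$'s; it does not transfer to \eqref{e:psi0}.) The paper's actual device is different: since $F_\alpha$ is piecewise quadratic, for $x_1\neq y_1$ one can choose $\alpha$ ranging over an interval so that $(v_{\alpha,1}(x),v_{\alpha,1}(y))$ is a quadratic polynomial in $\alpha$ as in \eqref{e:quadpoly}; its three coefficient fields then lie in the span, and repeating with $\alpha\mapsto\alpha+1/2$ and with the other coordinate yields $12$ explicit fields whose span is verified (symbolically) to be $4$-dimensional on the dense connected set $M$ of \eqref{e:2pirredspace}. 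You acknowledge the degenerate configurations as ``the technically demanding part'' but do not resolve them, and resolving them is the proof.

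The rescaled derivative process has the same issue in a sharper form. ``Explicit computation shows enough distinct Jacobians exist at every point'' is precisely what must be proved, and it is not automatic: after extracting the polynomial coefficients the span collapses into the five vectors \eqref{e:derivvectors}, whose fifth entry can vanish depending on $U$, so one must take Lie brackets of the lifted fields \eqref{e:derivprocessvectors} and show that the resulting non-degeneracy conditions \eqref{e:derivirred} cannot all fail simultaneously (in the paper, because that would force $\lap U=0$ and $\lap U=-16\kappa$ at once). Your proposal never identifies this $U$-dependent degeneracy, let alone rules it out, so the irreducibility of the derivative process --- and hence of the projective process, which you correctly derive from it --- is not established.
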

	\begin{proof}
		To show the irreducibility of the one point process we need to demonstrate that the Lie algebra generated by vector fields $v_{\alpha, 1}$ and $v_{\alpha, 2}$ (equations~\eqref{e:valpha1} and~\eqref{e:valpha2}, respectively), is two-dimensional for every point $x\in\T^2$. It is straightforward to verify directly, and we do not explicitly do that. Instead, we conclude it from the irreducibility of the two-point process. The irreducibility of the projective process follows from that of the rescaled derivative process. Thus we only need to show irreducibility of the two-point process and the rescaled derivative process.

		The proofs of irreducibility of the rescaled derivative process and the two-point process are similar, and we consider the two-point process first. 
		We need to show that for a dense connected subset of $\mathcal{D}^c$, the corresponding Lie algebra generated by the vector fields
		\begin{equation}
			\set{(v_{\alpha, i}^T, v_{\alpha, i}^T)^T}
		\end{equation}
		has dimension $4$.
		We will choose this dense connected subset to be the set~$M$ defined by
		\begin{equation}\label{e:2pirredspace}
			M\defeq\mathcal{D}^c-\set{(x, y)\in\T^2 \times \T^2 \st x_1= y_1 \hbox{ or } x_2= y_2}.
		\end{equation}
		Fix a pair $(x,y) \in M$, 
		$x=(x_1, x_2)$, $y=(y_1, y_2)$. For our $x_1$ and $y_1$ we can choose $\alpha\in\left[-\frac{1}{4}, \frac{3}{4}\right]$ so that the stream function~\eqref{e:psiDef} satisfies
		\begin{align*}
			F_{\alpha}(x_1) &=-2 (x_1 - \alpha) \paren[\Big]{ x_1 - \paren[\Big]{ \alpha + \frac{1}{2}}} + \frac{1}{8}\,,\\
			F_{\alpha}(y_1) &=2 (y_2 - \alpha)^2 \,,
		\end{align*}
		respectively. This gives that $(v_{\alpha, 1}^T(x), v_{\alpha, 1}^T(y))$ is a quadratic polynomial in $\alpha$. That is,
		\begin{equation}\label{e:quadpoly}
			\begin{pmatrix}
				v_{\alpha, 1}(x)\\
				v_{\alpha, 1}(y)
			\end{pmatrix}=w_0(x, y)+\alpha w_1(x, y)+\alpha^2 w_2(x, y)\,,
		\end{equation}
		for some vector fields  $w_0$, $w_1$, $w_2$ on $\T^2 \times \T^2$.
		This implies that the span of $(v_{\alpha, 1}(x), v_{\alpha, 1}(y))^T$ is contained in 
		\[
		\operatorname{Span}\set{w_0(x, y),~w_1(x, y),~w_2(x, y)}.
		\]
		By taking $\tilde{\alpha}\equiv \alpha+1/2 \pmod{1}$, we have that $(v_{\alpha, 1}(x), v_{\alpha, 1}(y))^T$ is also a quadratic polynomial in $\tilde{\alpha}$.
		Thus we write
		\[
		\begin{pmatrix}
			v_{\alpha, 1}(x)\\
			v_{\alpha, 1}(y)
		\end{pmatrix}=\tilde{w}_0(x, y)+\tilde{a} \tilde{w}_1(x, y)+\tilde{a}^2 \tilde{w}_2(x, y)\,,
		\]
		for some vector fields~$\tilde w_0$, $\tilde w_1$ and~$\tilde w_2$ on $\T^2 \times \T^2$.
		Thus, the $6$ vector fields
		\[
		w_0(x, y),~w_1(x, y)~,w_2(x, y),~\tilde{w}_0(x, y)~\tilde{w}_1(x, y),~\tilde{w}_2(x, y)
		\]
		are all contained in the Lie algebra at $(x, y)$.
		Similarly, for our $x_2$ and $y_2$ we can choose another $\alpha\in[0, 1]$, and obtain another $6$ vector fields from $v_{\alpha, 2}$.
		We can directly compute and check that the span of these $12$ vector fields has dimension $4$ if $x_1\neq y_1$ and $x_2\neq y_2$.
		Since the calculation is somewhat tedious to verify, we do this symbolically and the code verifying this can be found~\cite{ChristieFengEA23}.

		Also note that the set $M$ (defined in~\eqref{e:2pirredspace}) is connected.
		Thus we can apply the Rachevsky-Chow Theorem~\ref{T: chow} and conclude irreducibility of the two-point process. 


		We now show the rescaled derivative process is irreducible. The corresponding Lie algebra is in the tangent space at 
		$(x, A) \in \T^2\times \SL_2\R$. Let $A_t(v, x)$ be the derivative matrix of the flow $v$ at $x$ and time $t$ rescaled by its determinant, and $a_1$, \dots, $a_4$ be the individual entries of this matrix.
		That is, define
		\begin{equation}\label{e:matrixAirred}
		A_t(v, x) \defeq \frac{D_x\varphi_t^v}{\sqrt{\det D_x \varphi_t^v}}=
		\begin{pmatrix}
			a_{1} & a_{2}\\
			a_{3} & a_{4}
		\end{pmatrix}\,.
		\end{equation}
		We consider the vector fields of the form
		\begin{equation}\label{e:derivprocessvectors}
			\tilde{v}_{\alpha,1}=\begin{pmatrix}
				v_{\alpha, 1}\\
				\partial_t a_{1}\\
				\partial_t a_{2}\\
				\partial_t a_{3}\\
				\partial_t a_{4}
			\end{pmatrix}
			\,,
			\quad
			\tilde{v}_{\alpha,2}=\begin{pmatrix}
				v_{\alpha, 2}\\
				\partial_t a_{1}\\
				\partial_t a_{2}\\
				\partial_t a_{3}\\
				\partial_t a_{4}
			\end{pmatrix}\,.
		\end{equation}
		That is, the vector fields have the first two coordinates from the vectors~$v_{\alpha, 1}$ or $v_{\alpha, 2}$ and last four coordinates as the time derivatives of the corresponding matrix $A_t$.

		We now show the collection of these vector fields satisfies the conditions of Theorem~\ref{T: chow}.
		For this, it suffices to show that for every~$x \in \T^2$, the Lie algebra at $(x, \mathit{Id})$ has dimension $5$.
		Since the rescaled derivative process is a matrix flow, given any other $g\in \SL_2(\R)$, the Lie algebra also has dimension~$5$. 

		Since each $v$ is piecewise-smooth,
		we compute the Lie bracket on each smooth piece.
		By selecting $\alpha_1$ and $\alpha_2$ such that
		\begin{align*}
			F_{\alpha_1}(x_1) &=-2 (x_1 - \alpha_1) \paren[\Big]{ x_1 - \paren[\Big]{ \alpha_1 + \frac{1}{2}}} + \frac{1}{8}\,,\\
			F_{\alpha_2}(x_2) &=-2 (x_2 - \alpha_2) \paren[\Big]{ x_2 - \paren[\Big]{ \alpha_2 + \frac{1}{2}}} + \frac{1}{8}\,,
		\end{align*}
		we get $12$ vectors that are contained in the Lie algebra at $(x, \mathit{Id})$ from the same methodology as that done in the case of the two point process. 

		It turns out that the span of these 12 vector fields  is contained in the span of
		\begin{equation}\label{e:derivvectors}
			\begin{pmatrix}
				\kappa\\
				0\\
				0\\
				0\\
				\partial_1 U\\
				\partial_1 U
			\end{pmatrix},
			\quad
			\begin{pmatrix}
				0\\
				\kappa\\
				0\\
				0\\
				-\partial_2 U\\
				-\partial_2 U\\
			\end{pmatrix},
			\quad
			\begin{pmatrix}
				0\\
				0\\
				1\\
				0\\
				2\\
				1
			\end{pmatrix},
			\quad
			\begin{pmatrix}
				0\\    
				0\\
				0\\
				1\\
				0\\
				0
			\end{pmatrix},
			\begin{pmatrix}
				0\\    
				0\\
				0\\
				0\\
				\partial_{1}U\partial_2U+\kappa \partial_{1, 1}U\\
				\partial_{1}U\partial_2U+\kappa \partial_{1, 1}U
			\end{pmatrix}\,.
		\end{equation}
		While this can also be directly checked, the computation is somewhat tedious.
		Thus we include code verifying this symbolically~\cite{ChristieFengEA23}.

		The span of the five vectors shown in~\eqref{e:derivvectors} may be less than five-dimensional. If, however, the $5^{th}$ vector is non-zero, the span is  five-dimensional. It is not easy to verify that the the $5^{th}$ vector is non-zero
		on a connected dense set. Therefore, we use different vectors to obtain the full rank.  

		Computing the Lie bracket of 
		$\tilde{v}_ {\alpha_1, 1}$ and $\tilde{v}_{\alpha_2, 2}$, defined in~\eqref{e:derivprocessvectors}, we obtain a quadratic polynomial in $\alpha_1$ and $\alpha_2$. Similar to~\eqref{e:quadpoly}, we now obtain a collection of 9 vectors that lie in our Lie Algebra.
		Using a computer to perform symbolic Gauss-Jordan elimination, we can use these 9 vectors and~\eqref{e:derivvectors} to obtain another set of 9 vectors, such that their first $4$ coordinates are $0$.
		(The code for this can also be found~\cite{ChristieFengEA23}.)
		The resulting vectors, and the last vector in~\eqref{e:derivvectors} yields the following non-degeneracy condition: the dimension of the Lie algebra at $(x, \mathit{Id})$ is $5$ if at least one of the following five inequalities hold
		\begin{equation}\label{e:derivirred}
			\begin{aligned}
				0 &\neq \partial_{1, 2} U\\
				0 &\neq \partial_{1}U\partial_2U+\kappa \partial_{1, 2}U \\
				0 &\neq \partial_{1}U\partial_2 U+\left(\partial_1 U\right)^2+3\kappa \partial_{1, 2} U\\
				0 &\neq 16 \kappa+\partial_{2, 2}U+\partial_{1, 1}U-3 \partial_{1, 2} U\\
				0 &\neq \kappa \partial_{2, 2} U +\kappa\partial_{1, 1} U +4\left(\partial_1 U\right)^2+\kappa \partial_{1, 2} U\,.	
			\end{aligned}
		\end{equation}
		We claim that for any $\kappa \neq 0$, at least one of the above non-degeneracy conditions must hold.
		Indeed if all the above non-degeneracy conditions fail, then we must have
		\[
		\Delta U(x) =0 \quad \text{and}\quad  \Delta U(x)=-16\kappa\,,
		\]
		which is impossible. Thus, the Lie algebra at $(x, \mathit{Id})$ has dimension $5$ and we can apply the Rachevsky--Chow Theorem.
		This completes the proof of irreducibility of the rescaled derivative process~\eqref{e: derivpros}.
	\end{proof}
	\begin{remark}\label{r:derivcontrol}
		Note that we demonstrated controllability of the two point process only on a connected dense set of $\mathcal{D}^c$, but controllability of the rescaled derivative process was shown on the entire set $\T^2\times\SL_2(\R)$.
	\end{remark}

\subsection{Positivity of top Lyapunov Exponent}
	Controllability of the rescaled derivative process implies that the top Lyapunov exponent is positive.
	This follows from the following version of Furstenberg's criterion.

	\begin{theorem}[Furstenberg's criterion]\label{T: Furst}
		For a sequence of elementary events $\underline{\omega}=(\omega_1, \omega_2, \dots)\in\Omega^\N$, continuous random dynamical system $f=f_{\omega}(x)$, $x \in X$ and measurable $A: \Omega\times X\to \operatorname{SL}_d(\R)$ consider a linear cocycle
		\begin{equation}\label{e:rdsnotation}
			f^n_{\underline{\omega}}=f_{\omega_n}\circ\dots\circ f_{\omega_1} \hbox{ and } A^n_{\underline{\omega}, x}=A_{w_n, f^{n-1}_{\underline{\omega}}(x)}\circ\dots A_{\omega_1, x}.
		\end{equation}
		Suppose $A^n_{\underline{\omega}, x}$ is integrable, and $\pi$ is the invariant measure of the Markov process corresponding to $f$. Then $A^n_{\underline{\omega}, x}$ has two Lyapunov exponents. Let $\lambda^+ \geq 0$ be the larger of the two.
		Then $\lambda^+$ can only be $0$ if there exists a family of probability measures $\{\nu_x\}_{x\in\supp\{\pi\}}$ on $\Pro$ varying measurably in $x$ such that for every $x\in \operatorname{Support}(\pi)$, $n\geq1$, and all pairs
		\begin{equation}\label{e:supplaw}
			(y, A)\in \operatorname{Support}(\operatorname{Law}( f^n_{\underline{\omega}}(x), A^n_{\underline{\omega}, x}))
		\end{equation}
		we have that the  pushforward of $\nu_y$ satisfies
		\begin{equation}\label{e:projectivemeasures}
			\left(A^{T}\right)_* \nu_y= \nu_x.
		\end{equation}
	\end{theorem}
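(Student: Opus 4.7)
The plan is to follow Furstenberg's classical strategy adapted to the random dynamical systems setup, organizing the argument into three steps.

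First, I would establish existence of two Lyapunov exponents summing to zero. Integrability of $\log^+ \|A_{\omega, x}\|$ together with the subadditivity of $n \mapsto \log \|A^n_{\underline\omega, x}\|$ lets us apply Kingman's ergodic theorem to the skew-product shift on $\Omega^\N \times X$ with invariant measure $P^{\otimes \N} \otimes \pi$, yielding almost-sure existence of
\begin{equation*}
\lambda^+ = \lim_{n \to \infty} \frac{1}{n} \log \|A^n_{\underline\omega, x}\|,
\end{equation*}
which is a.s.\ constant by ergodicity of the Markov chain. Since $A_{\omega, x} \in \SL_2(\R)$ forces $\det A^n = 1$, the Oseledets multiplicative ergodic theorem furnishes a second exponent equal to $-\lambda^+$, so $\lambda^+ \geq 0$ is the larger of the two.

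Second, I would construct the candidate family $\{\nu_x\}$. The dual projective action $(x, [v]) \mapsto (f_\omega(x), [A^T_{\omega, x} v])$ on $X \times \Pro$ inherits the Feller property from continuity of the cocycle; combined with compactness of $\Pro$, a Krylov--Bogolyubov argument produces an invariant probability $\hat\pi$ on $X \times \Pro$, and averaging Ces\`aro iterates of $\pi \otimes \mathrm{Leb}_\Pro$ ensures the $X$-marginal is $\pi$. Disintegrating $\hat\pi(dx, dv) = \pi(dx)\,\nu_x(dv)$ gives a measurable family of probabilities on $\Pro$ satisfying the averaged invariance $\E[(A^T_{\omega, x})_* \nu_{f_\omega(x)}] = \nu_x$ for $\pi$-a.e.\ $x$.

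Third, I would use Furstenberg's integral identity to upgrade averaged invariance to pointwise equivariance. For any such $\hat\pi$ one has
\begin{equation*}
\lambda^+ \;\geq\; \int_X \int_{\Pro} \E\bigl[ \log \|A^T_{\omega, x} v\| \bigr] \, \nu_x(dv) \, \pi(dx),
\end{equation*}
with equality precisely when $(A^T_{\omega, x})_* \nu_{f_\omega(x)} = \nu_x$ almost surely; this is the strict Jensen step, applied to the convex functional $\rho \mapsto -\int \log \rho \, d\nu$ on the Radon--Nikodym densities of the pushforwards. Under $\lambda^+ = 0$, the trivial lower bound obtained from $\|A^T v\|\cdot\|A^{-T} v\| \geq 1$ (valid on $\SL_2$) forces the right-hand side to vanish, so equality holds and the family $\{\nu_x\}$ is a.s.\ equivariant at $n=1$. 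The cocycle identity $A^{n+1}_{\underline\omega, x} = A_{\omega_{n+1}, f^n(x)} \circ A^n_{\underline\omega, x}$ then propagates equivariance to every $n \geq 1$.

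The main obstacle will be upgrading the almost-sure equivariance to the pointwise conclusion~\eqref{e:projectivemeasures} over the entire topological support of $\mathrm{Law}(f^n_{\underline\omega}(x), A^n_{\underline\omega, x})$. I would address this by selecting a weakly continuous version of $x \mapsto \nu_x$ on $\operatorname{supp}\pi$ from Feller continuity of the projective Markov chain together with uniqueness of the disintegration at continuity points, and then invoking weak-$*$ continuity of $A \mapsto A^T_* \nu$ to pass from a.e.\ to support-pointwise. The non-compactness of the $\SL_2$-action on $\Pro$ is the delicate point, which I expect to handle via the log-integrability hypothesis preventing escape of mass to infinity.
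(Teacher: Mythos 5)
You should first be aware that the paper does not prove this statement: immediately after Theorem~\ref{T: Furst} the authors state that it is a combination of Proposition~2 and Theorem~3 of~\cite{Ledrappier86}, with the strengthening from ``almost every'' to the full topological support of the $n$-step law taken from~\cite{Bougerol88} and \cite[Proposition~2.9]{BlumenthalCotiZelatiEA22}. So you are reconstructing a known result from scratch, and your Steps~1 and~2 (Kingman/Oseledets for the two exponents, Krylov--Bogolyubov plus disintegration to get a family $\{\nu_x\}$ satisfying the \emph{averaged} invariance $\E[(A^T_{\omega,x})_*\nu_{f_\omega(x)}]=\nu_x$) follow the classical outline correctly.

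The genuine gap is in Step~3, where averaged invariance is upgraded to pointwise equivariance. The claimed dichotomy --- that equality in $\lambda^+ \geq \int\int \E[\log\|A^T_{\omega,x}v\|]\,\nu_x(dv)\,\pi(dx)$ holds \emph{precisely} when $(A^T_{\omega,x})_*\nu_{f_\omega(x)}=\nu_x$ almost surely --- is not correct. For any stationary measure of the (dual) projective chain, the Furstenberg--Khasminskii integral takes a value in $[\lambda^-,\lambda^+]$, so under the hypothesis $\lambda^+=\lambda^-=0$ it vanishes for \emph{every} stationary measure; ``equality'' is therefore automatic and carries no rigidity. The Jensen functional you invoke (essentially the Furstenberg entropy $-\int\log\tfrac{d(A^T)_*\nu_{fx}}{d\nu_x}\,d\nu_x$) does vanish if and only if the family is equivariant, but identifying that entropy with the Lyapunov integral requires the pushforwards to be absolutely continuous with computable Jacobians, which holds for the rotation-invariant measure on $\Pro$ but not for an arbitrary disintegration $\nu_x$; this identification is exactly the hard step and is not supplied. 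In Ledrappier's argument the rigidity comes instead from a martingale analysis: the backward-iterated measures $((A^n_{\underline\omega,x})^T)_*\nu_{f^n_{\underline\omega}(x)}$ form a bounded measure-valued martingale converging almost surely, and one shows that a non-deterministic limit forces the cocycle to contract $\Pro$ exponentially, contradicting $\lambda^+=0$. Without that contraction dichotomy your Step~3 does not close. The final upgrade from a.s.\ equivariance to equivariance over the entire support of $\operatorname{Law}(f^n_{\underline\omega}(x),A^n_{\underline\omega,x})$, which you correctly flag as delicate, is also a substantive step --- it is the content of \cite[Proposition~2.9]{BlumenthalCotiZelatiEA22} --- and needs more than weak-$*$ continuity of $A\mapsto A^T_*\nu$; one must also produce a continuous version of $x\mapsto\nu_x$ on $\operatorname{supp}\pi$, which does not follow from the Feller property alone.
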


	\begin{remark}
		There are many versions of the Furstenberg's criterion. We rescale the derivative process to use this version of Furstenberg's criterion. The version of Furstenberg's criterion given by Theorem~\ref{T: Furst} is a combination of Proposition~2 and Theorem~3 in~\cite{Ledrappier86} with one difference. This version has an extra conclusion that Equation~\eqref{e:projectivemeasures} holds for all elements in the support of the $n$-step transition probabilities. The conclusion follows from arguments done in Bougerol’s paper \cite{Bougerol88}, and was explicitly proven by A. Blumenthal et al. \cite[Proposition 2.9]{BlumenthalCotiZelatiEA22}. 
	\end{remark}	

	Let us recall definitions of notions that arise in Theorem~\ref{T: Furst} (refer to~\cite[Chapter 3]{Arnold98}). Briefly recall that a {\it continuous random dynamical system} (with independent increments in our case) on a metric space $X$, over a probability space $(\Omega, \mathcal{G}, \Prob)$, is a mapping $(\omega, x) \mapsto f_\omega(x)$ such that for every fixed $x\in X$ the 
	function $f_\omega(x)$ is a random variable in $\omega$, and for every fixed $\omega \in \Omega$ the function $f_\omega(x)$ is continuous in $x$. Such random dynamical systems {\it correspond} to Markov processes with transition probabilities 
	\[
	P(x, A)=\Prob(f_\omega(x)\in A) \,.
	\]
	A cocycle is {\it integrable} if 
	\begin{equation}\label{e:cointcond}
	\int\left(\ln^+ \norm{ A_{\omega, x}}+\ln^+ \norm{A^{-1}_{\omega, x}}\right) \, d\Prob(\omega) \, d\pi(x) \,.
	\end{equation}

	Note that the rescaled derivative process~\eqref{e: derivpros} is a continuous random dynamical system with integrable linear cocycle, and so we can apply Theorem~\ref{T: Furst} to show that the one point process~\eqref{e:AlexsGreatestEquation} has a positive Lyapunov exponent.
	\begin{lemma}\label{l: poslyaexpon}
		The top Lyapunov exponent of the one point process~\eqref{e:AlexsGreatestEquation} is positive.
	\end{lemma}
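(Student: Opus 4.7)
The plan is to apply Furstenberg's criterion (Theorem~\ref{T: Furst}) to the rescaled derivative process~\eqref{e: derivpros} and argue by contradiction. Since the vector fields $v_{\alpha, i}$ have uniformly bounded derivatives (with bounds depending only on $\kappa$ and $U$), Gr\"onwall's inequality gives uniform control on the operator norms of $A^n_{\underline{\omega}, x}$ and its inverse, so the integrability condition~\eqref{e:cointcond} is automatic. Assume for contradiction that the top Lyapunov exponent $\lambda^+$ vanishes. Furstenberg's criterion then yields a measurable family of probability measures $\{\nu_x\}_{x \in \T^2}$ on $\Pro$ such that $(A^T)_* \nu_y = \nu_x$ for every $n \geq 1$ and every pair $(y, A)$ in the support of $\operatorname{Law}( f^n_{\underline{\omega}}(x), A^n_{\underline{\omega}, x} )$.

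Next I would exploit the irreducibility of the rescaled derivative process established in Lemma~\ref{kappa_deriv}. Because the random parameters $(\alpha_n, \beta_n, i_n)$ have strictly positive density on $[0,1]^2 \times \{1,2\}$ and the time-$1$ flow depends continuously on them, the process is Feller; combined with the Rashevsky--Chow irreducibility on $\T^2 \times \SL_2(\R)$, this forces the union over $n$ of the supports of $\operatorname{Law}( f^n_{\underline{\omega}}(x_0), A^n_{\underline{\omega}, x_0} )$ to be dense in $\T^2 \times \SL_2(\R)$ for any fixed $x_0 \in \T^2$. Following the argument of~\cite[Proposition~2.9]{BlumenthalCotiZelatiEA22}, this density together with the weak continuity of the map $A \mapsto (A^T)_* \nu$ upgrades the Furstenberg invariance to the statement that $(A^T)_* \nu_{x_0} = \nu_{x_0}$ for every $A \in \SL_2(\R)$.

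The contradiction then follows from the classical fact that no Borel probability measure on $\Pro$ is invariant under the full $\SL_2(\R)$ action. For a hyperbolic $A \in \SL_2(\R)$ with eigenvalues $\lambda^{\pm 1}$, $\lambda > 1$, the iterates $((A^T)^n)_* \nu_{x_0}$ converge weakly to the Dirac mass at the unstable direction of $A^T$, unless $\nu_{x_0}$ is itself the Dirac mass at the stable direction. But if $\nu_{x_0} = \delta_v$ for some $v \in \Pro$, then $(B^T)_* \delta_v = \delta_{B^T v}$, and choosing any $B \in \SL_2(\R)$ with $B^T v \neq v$ violates the invariance. Hence $\lambda^+ > 0$.

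The main obstacle is the passage from the Rashevsky--Chow algebraic controllability supplied by Lemma~\ref{kappa_deriv} to the support statement for our discrete time random system. Rashevsky--Chow provides compositions of deterministic flows with arbitrary (and possibly negative) times, whereas our system only produces time-$1$ flows of the rescaled fields $\beta_n v_{\alpha_n, i_n}$ with $\beta_n \in [0,1]$. Subdividing longer flow segments into unit pieces and using the positive density of the parameters together with continuous dependence of the flow should resolve this, but some care is needed to verify that the richness of the family $\{v_{\alpha, i}\}$ is enough to approximate any admissible Rashevsky--Chow path through forward-time compositions alone, so that the reachable set truly fills out an open, and hence dense, subset of $\T^2 \times \SL_2(\R)$.
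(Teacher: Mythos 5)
Your overall strategy coincides with the paper's: apply Furstenberg's criterion to the rescaled derivative cocycle, assume $\lambda^+=0$, and contradict the existence of the invariant family $\{\nu_x\}$ using the controllability supplied by Lemma~\ref{kappa_deriv}. The divergence, and the gap, is in the upgrade step. You propose to deduce $(A^T)_*\nu_{x_0}=\nu_{x_0}$ for \emph{every} $A\in\SL_2(\R)$ from density of the union of the $n$-step supports in $\T^2\times\SL_2(\R)$, passing to limits using weak continuity of $A\mapsto (A^T)_*\nu$. This does not work as stated: the Furstenberg relation reads $(A^T)_*\nu_y=\nu_{x_0}$ for pairs $(y,A)$ in the support, and $y\mapsto\nu_y$ is only \emph{measurable}, so you cannot take limits along pairs $(y_k,A_k)\to(x_0,A)$ with $y_k\neq x_0$. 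What is actually needed is that the matrices $A$ you use appear in the support with the position component returned \emph{exactly} to $x_0$, i.e.\ that $(x_0,A)$ is reachable from $(x_0,\mathit{Id})$ — which is precisely what Remark~\ref{r:derivcontrol} provides (controllability of the rescaled derivative process on all of $\T^2\times\SL_2(\R)$), and is how the paper's proof selects its matrices. The second issue you flag yourself — converting Rashevsky--Chow reachability (arbitrary flow times) into membership in the support of the discrete-time law built from unit-time flows of $\beta v_{\alpha,i}$ with $\beta\in[0,1]$ — is genuine but standard: a time-$t$ flow of $v_{\alpha,i}$ with $t$ small equals the unit-time flow of $t\,v_{\alpha,i}$, the parameters $(\alpha,\beta,i)$ have full support, and longer segments are concatenations; so the reachable set lies in the (closed) support of the $n$-step law for suitable $n$.

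Granting the invariance only for reachable matrices, your endgame is a legitimate alternative to the paper's. The paper builds explicit projective contractions $B_k$ and contradicts the invariance by producing two distinct atoms of $\nu_{x_0}$ each carrying the full mass of a fixed interval. You instead invoke the non-existence of an $\SL_2(\R)$-invariant probability measure on $\Pro$ via a hyperbolic element; note only that the iteration argument leaves open the possibility that $\nu_{x_0}$ is a convex combination of Dirac masses at \emph{both} the stable and unstable directions (not just the stable one), which you then eliminate with a second reachable matrix not preserving that pair of lines. With these repairs — exact return of the position component, and the two-atom case — your argument closes.
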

	\begin{proof}
		Suppose towards contradiction that the top Lyapunov exponent of the one point process~\eqref{e:AlexsGreatestEquation} is $0$. This implies that the top Lyapunov exponent of the linear cocycle $A_n$ (defined in~\eqref{e: derivpros}) is $0$.
		Indeed, since~$U$ is bounded
		the measure $\mu$ is equivalent to the Lebesgue measure on $\T^2$. So the $\mu$-measure preserving map $\phi_n$ satisfies
		\begin{equation*}
			0<C_1\leq\det D_x \phi_n(x)\leq C_2 < \infty\,,
		\end{equation*}
		uniformly in $n\in\N$, $x \in \T^2$ and the parameters $(\alpha, \beta, i)\in[0, 1]\times[0, 1]\times\{0, 1\}$. This implies that the top Lyapunov exponent of the one point process is the same as the top Lyapunov exponent of the rescaled derivative process~\eqref{e: derivpros}. Indeed, for any $v\in\R^2$ we have
		\[
		\lim_{n\to\infty}\frac{1}{n}\ln \norm{A_n v}=\lim_{n\to\infty}\frac{1}{n}\left(\ln \norm{D_x\phi_n v}-\ln\left(\det D_x \phi_n\right)\right)=\lim_{n\to\infty}\frac{1}{n}\ln \norm{D_x\phi_n v},
		\]
		almost surely with respect to $\Prob\times \mu$.

		Now assume that the top Lyapunov exponent of the linear cocycle $A_n$ is $0$.
		Suppose $\nu_x$ is a family of probability measures produced by Theorem~\ref{T: Furst}. Identify $\Pro$ with $S^1$ by considering the elements of elements of $\Pro$ as unit vectors in $\R^2$. Let $\epsilon>0$ be arbitrary and fix $x\in\T^2$. Further suppose that $\mathcal{U}\subset\Pro$, $\mathcal{U}=(a-\epsilon, a+\epsilon)$ such that $\nu_x (\mathcal{U})>0$. We can select $B_k \in SL_2(\R)$ such that $B_k$ contracts $\mathcal{U}$ into $\mathcal{U}$ by mapping $a-\epsilon$ to $a-\epsilon/k$, and  $a+\epsilon$  to $a+\epsilon/k$. In other words, we claim we can find $B_k\in SL_2(\R)$ such that
		\begin{equation}\label{e:matrixconds}
			B_k \mathcal{U}\subset \mathcal{U}\,,
			\quad
			a\in B_k\mathcal{U}\,,
			\quad\text{and}\quad
			\operatorname{Leb}
			\left(B_k\mathcal{U}\right)<\frac{\epsilon}{k}\,,
		\end{equation}
		for an arbitrary $k\in\mathbb{N}$. Set $A_k=\left(B_k^T\right)^{-1}$.
		By controllability of the rescaled derivative process (see Remark~\ref{r:derivcontrol}), we can reach $\left(x, A_k\right)$ 
		from $\left(x, \mathit{Id} \right)$
		in finite time. Therefore, $(x, A_k)$ satisfies condition~\eqref{e:supplaw} and so equation~\eqref{e:projectivemeasures} for each $B_k$ becomes
		\[
		\nu_x \left(\mathcal{U}\right)=\nu_x \left(B_k \mathcal{U}\right).
		\]
		Since $k$ was arbitrary 
		\[
		\nu_x(\{a\})=\nu_x(\mathcal{U})>0.
		\] Select any $c\in\mathcal{U}$, $c\neq a$. Analogous to \eqref{e:matrixconds}, we can select $C_k$ such that
		\begin{equation}
			C_k \mathcal{U}\subset \mathcal{U}\,,
			\quad c\in C_k\mathcal{U} \,,
			\quad\text{and}\quad
			\operatorname{Leb}\left(C_k\mathcal{U}\right)<\frac{\epsilon}{k}\,.
		\end{equation}
		Thus, again $\nu_x(\{c\})=\nu_x(\mathcal{U})>0$, but $\nu_x(\mathcal{U})\geq\nu_x(\{a\})+\nu_x(\{c\})$.
		So we have reached a contradiction and the top Lyapunov exponent of the rescaled derivative process is not~$0$.
		The top Lyapunov exponent cannot be negative due to the Kingman Subadditive Ergodic Theorem~\cite{Kingman1973}.
		This implies the top Lyapunov exponent of the one point process is strictly positive.
	\end{proof}

\subsection{Feller Property}
	To prove that the modified, randomly shifted, sawtooth shears are exponentially mixing it remains to show that that the one point, projective and two point processes are Feller. Recall, a process is Feller if the function $x\mapsto \EX^x[f(X_1)]$ is continuous for every bounded continuous function~$f$. 

	\begin{lemma}\label{L: mufeller}
		The one point process~\eqref{e:AlexsGreatestEquation}, projective process~\eqref{e: projpros}, and two point process~\eqref{def: 2pointp} are Feller.
	\end{lemma}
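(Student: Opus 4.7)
The Feller property for each process amounts to showing that, for every bounded continuous $f$, the one-step expectation $z \mapsto \E^z[f(Z_1)]$ is continuous in $z$. I would express this expectation as an integral over the random parameters $\omega = (\alpha_1, \beta_1, i_1) \in [0,1]^2 \times \set{1,2}$, establish continuity of the integrand in the state variable (possibly only for almost every $\omega$), and then invoke the bounded convergence theorem. The basic ingredient is that, in both cases of Theorem~\ref{t:SinMix}, the function $F$ is $C^1$ with Lipschitz derivative, so~\eqref{e:valpha1}--\eqref{e:valpha2} show that each vector field $v_{\alpha, i}$ is Lipschitz in $x$ with Lipschitz constant bounded independently of $\alpha$. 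By Gr\"onwall's inequality (cf.\ Remark~\ref{r:lip}) the time-one flow map $\phi(x, \omega) \defeq \varphi^{V_1(\omega)}(x)$ is Lipschitz in $x$ uniformly in $\omega$ and depends continuously on $(x, \alpha_1, \beta_1)$ for each fixed $i_1$.

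For the one point process, if $x_n \to x$ then $\phi(x_n, \omega) \to \phi(x, \omega)$ for every $\omega$, so continuity and boundedness of $f$ combined with bounded convergence yield continuity of $x \mapsto \E[f(\phi(x, \omega))]$. The two point process is handled in the same fashion by applying this observation coordinate-wise to $(x, y) \mapsto (\phi(x, \omega), \phi(y, \omega))$; note that $\phi(\cdot, \omega)$ is a diffeomorphism, so its image remains in $\mathcal{D}^{\comp}$ whenever $x \neq y$.

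The projective process also involves the derivative $D_x\phi$ and is the most delicate case. When $F(x) = \sin(2\pi x)$, $v_{\alpha, i}$ is smooth in $x$, $D_x\phi$ depends jointly continuously on $(x, \omega)$, and the previous argument extends directly. In the sawtooth case, $v_{\alpha, i}$ is only piecewise $C^1$, with $Dv_{\alpha, i}$ jumping across the hyperplanes $H_{\alpha, i} \defeq \set{ x_i \equiv \alpha + 1/4 \text{ or } \alpha + 3/4 \pmod 1 }$. For each fixed $x$, however, the set of $\omega$ for which the trajectory $\set{ \varphi^{V_1(\omega)}_t(x) }_{t \in [0, 1]}$ meets some $H_{\alpha, i}$ non-transversally is a codimension-one condition in $(\alpha_1, \beta_1)$ and hence has measure zero in the parameter space. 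Off this null set the trajectory makes only finitely many transverse crossings, and integrating the variational equation along the resulting smooth segments shows that $D_x\phi(\cdot, \omega)$ is continuous at $x$. The bounded convergence theorem applied to $\E[f(\phi(x, \omega), D_x\phi(x, \omega) u / \abs{D_x\phi(x, \omega) u})]$ finishes the proof; the principal obstacle is precisely this last step, where the continuous distribution of $\alpha_1$ smooths out the discontinuities of $Dv_{\alpha, i}$ upon averaging.
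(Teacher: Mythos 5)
Your proof follows the same overall strategy as the paper's: for the one-point and two-point processes you use the uniform Lipschitz bound on $v_{\alpha,i}$ and Gr\"onwall to get continuity of the flow in the initial condition, and for the projective process you exploit the continuous distribution of the shift $\alpha$ to average out the discontinuities of $\nabla v_{\alpha,i}$ in the sawtooth case. The one point where your argument diverges from the paper's is in \emph{how} that averaging is justified. The paper fixes a threshold $\delta_2$ so that $|f(\varphi(x),D\varphi(x)u)-f(\varphi(y),D\varphi(y)u')|<\epsilon/2$ whenever $x,y$ are within $\delta_2$ \emph{and lie in the same smooth piece} of $v_{\alpha,i}$, and then bounds the probability of the bad parameter set where $x,y$ straddle one of the break lines. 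You instead argue that, for a.e.\ $\omega$, the trajectory started at $x$ crosses the break hyperplanes only transversally, so $D_x\varphi(\cdot,\omega)$ is continuous at $x$, and then apply dominated convergence. Your criterion is in fact the more accurate one: the flow does not preserve the smooth pieces (the $x_1$-component of $v_{\alpha,1}$ is generically nonzero on $\{x_1=\alpha+1/4\}$ and $\{x_1=\alpha+3/4\}$), so "$x$ and $y$ in the same smooth piece'' does not by itself guarantee continuity of $D\varphi$ near $x$ --- the trajectory may graze a break line and produce a discontinuity. You also correctly observe that $F_0''$ is continuous at $0\equiv 1$, so there are only two break lines per shear direction, not the three the paper lists. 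That said, your key assertion --- that the set of $(\alpha_1,\beta_1)$ for which the trajectory meets a break hyperplane non-transversally is a Lebesgue-null "codimension-one'' set --- is left at the level of a plausibility argument, just as the paper's analogous step is; so the two proofs land at roughly the same level of rigor while phrasing the smoothing mechanism differently.
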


	\begin{remark}
		No randomness is needed to show that the one point and two-point processes are Feller. We will, however, use randomness to show that the projective process is Feller. 
		We will capitalize on the fact that the parameter $\alpha$ in~\eqref{e:psiDef} is uniformly distributed on $[0, 1]$. 
	\end{remark}

	\begin{proof}[Proof of Lemma~\ref{L: mufeller}]
		The velocity fields given in~\eqref{e:valpha1} and~\eqref{e:valpha2} are uniformly Lipschitz. Thus, the two point process satisfies a bound similar to equation~\eqref{e:lip} in Remark~\ref{r:lip}. Therefore, it is Feller. Similarly the one point process is Feller.

		We will now show that the projective process is Feller. Note that $v_{\alpha, i}(x)$, and, therefore, the time $1$ flows $\varphi^{\beta v_{\alpha, i}}(x)$ and its derivatives $D_x \varphi^{\beta v_{\alpha, i}}(x)$ are smooth  on $\T^2$ except for three lines: $x_i=\alpha$, $x_i=\alpha+1/4$, and $x_i=\alpha+3/4$. 
		In order to compute the expectation
		\begin{equation}\label{e:Fel}
			\EX^{(x, u)}[f(\phi_1(x), D_x\phi_1 u)]= \EX^{(x, u)}[f(\varphi^{V_1}(x), D_x\varphi^{V_1} u)],
		\end{equation}
		we need to fix $(x, u)\in\T^2\times\Pro$ 
		and integrate over the 
		parameter space, 
		$(\alpha, \beta, i) \in [0, 1]\times[0, 1]\times \{1, 2\}$. The function 
		$f(\phi_1(x), D_x\phi_1(x) u)$ is uniformly bounded. It is also
		continuous everywhere except 
		$x_i=\alpha$, $x_i=\alpha+1/4$, and $x_i=\alpha+3/4$. Therefore its integral 
		with respect to the parameter $\alpha$ is continuous. Thus the 
		expectation~\eqref{e:Fel} is continuous with respect to $x$ and $u$.

		We now do the formal proof that the projective process is Feller. Fix $f\in\mathcal{C}(\T^2\times\Pro)$ and let $\epsilon>0$ be arbitrary. Let $d(\cdot, \cdot)$ be the product distance on $\T^2\times\Pro$. First select $\delta_1>0$ such that $6 C\delta_1<\epsilon/2$, where $C$ is such that $f(z)\leq C$ for all $z\in\T^2 \times\Pro$.
		Now select $0<\delta_2<\delta_1$ so that if $(x, u), (y, u')\in\T^2\times\Pro$ are such that $d((x, u), (y, u'))<\delta_2$, and $x$ and $y$ are in the same smooth piece of $\beta v_{\alpha, i}$ we have that
		\begin{equation}\label{e: feller}
			\lvert f(\phi^{\beta v_{\alpha, i}}_t (x), D_x \phi^{\beta v_{\alpha, i}}_t u)-f(\phi^{\beta v_{\alpha, i}}_t(y), D_x \phi^{\beta v_{\alpha, i}}_t u')\rvert<\frac{\epsilon}{2}
		\end{equation}
		for all $\alpha, \beta\in[0, 1]$ and $i\in\{1, 2\}$. Let $(x, u),~(y, u')\in \T^2\times\Pro$ be such that $d((x, u), (y, u'))<\delta_2$ and 
		\[
		A=\{(\alpha, \beta, i)\in[0, 1]\times[0, 1]\times\{0, 1\} \st \hbox{ equation~\eqref{e: feller} does not hold}\}\,.
		\]
		Note that $\Prob(A)< 6 \delta_1$ since there are both vertical and horizontal modified shear flows, and each modified shear flow has $3$ different smooth pieces. This gives
		\begin{align*}
			\MoveEqLeft
			\left\lvert \EX^{(x, v)} [f(X_1, V_1)]- \EX^{(y, v')} [f(X_1, V_1)]\right\rvert \\
			&\leq \int_{A} \left\lvert f(\phi_1(x), D_x\phi_1(x)v)-f(\phi_1(y), D_x\phi_1(y)v')\right\rvert d\Prob\\
			&\qquad+\int_{A^{\comp}} \left\lvert f(\phi_1(x), D_x\phi_1(x)v)-f(\phi_1(y), D_x\phi_1(y)v')\right\rvert d\Prob\\
			&< \frac{\epsilon}{2} +6 C\delta_1<\epsilon.
		\end{align*}
		Thus, the projective process is Feller.
	\end{proof}

\subsection{Proof of Theorem~\ref{t:SinMix}}
\begin{proof}[Proof of Theorem~\ref{t:SinMix} with a sawtooth shear profile in two dimensions]
The proces\-ses~\eqref{e:AlexsGreatestEquation}, \eqref{e: projpros}, and \eqref{def: 2pointp} are all irreducible by Lemma~\ref{kappa_deriv}. The three processes are Feller by Lemma~\ref{L: mufeller}. Thus, both the derivative and one point processes are uniformly geometrically ergodic. Furthermore, the one point process has a positive Lyapunov exponent by Lemma~\ref{l: poslyaexpon}. Finally, the two point process is $\mathcal{V}$-geometrically ergodic with respect to a function of the form~\eqref{e: lyapunovfunctionform}. This follows from section~$6$ in~\cite{BedrossianBlumenthalEA22}, and the fact that the two point process~\eqref{e:AlexsGreatestEquation} is irreducible and Feller. Thus, by Proposition~\ref{T:micheleMixing} the modified, randomly shifted sawtooth shears are almost surely mixing.
\end{proof}
We now consider modified, randomly shifted, sine shears. In this case we take the stream function~$F_0$ in~\eqref{e:psi0} as
\[
F_0(x)=\sin(2\pi x),
\]
and define 
\begin{equation}\label{e:sinshearpotential}
F_\alpha(x)=F_0(x-\alpha) \hbox{ for } \alpha\in[0, 1].
\end{equation}
The vector fields~\eqref{e:valpha1} and~\eqref{e:valpha2} are defined using the stream functions given by~\eqref{e:sinshearpotential}. All other definitions are the same.

\begin{proof}[Proof of Theorem~\ref{t:SinMix} with a sine shear profile in two dimensions]
After we prove that both the two point process and rescaled derivative process are irreducible, the proof with a sine stream function is identical to the proof with a tent stream function.
We will now show that the two point process is irreducible.
As in Lemma~\ref{kappa_deriv} we must show that the Lie algebra generated by the vector fields
		\begin{equation}
			\set{(v_{\alpha, i}^T(x), v_{\alpha, i}^T(y))^T}
		\end{equation}
		has dimension $4$ on a dense connected set. We will show that the Lie algebra has dimension $4$ on the set
\begin{equation*}\label{e:sinshearconnectedset}
			M\defeq\mathcal{D}^c- S,
\end{equation*}
where 
\begin{equation*}
S=\set{ x_1= y_1 \hbox{ or } x_2= y_2}\cup\set{x \hbox{ or } y \hbox{ is a critical point of } U}\subset\T^2\times\T^2.
\end{equation*}
By selecting $\alpha=x_2$ and $\alpha=x_1$ we obtain the $2$ linearly independent vectors,
\begin{equation}\label{e:span1}
s_2\defeq -\kappa\begin{pmatrix}
v_{x_2, 2}(x)\\
 v_{x_2, 2}(y)
 \end{pmatrix}=
 \begin{pmatrix}
 2\pi\kappa\\
 0\\
 2\pi\kappa\cos(2\pi(x_2-y_2))+\sin(2\pi(x_2-y_2))\partial_2U(y)\\
 -\sin(2\pi(x_2-y_2))\partial_1U(y)
 \end{pmatrix} 
 \end{equation}
 and
 \begin{equation}\label{e:span2}
s_1\defeq\kappa \begin{pmatrix}
v_{x_1, 1}(x)\\
 v_{x_1, 1}(y)
 \end{pmatrix}=
 \begin{pmatrix}
 0\\
 2\pi \kappa \\
 \sin(2\pi(y_1-x_1))\partial_2U(y)\\
 2\pi\kappa\cos(2\pi(y_1-x_1))-\sin(2\pi(y_1-x_1))\partial_1U(y)
 \end{pmatrix}.
\end{equation}
Using a computer to perform symbolic Gauss-Jordan elimination with $s_2$ and $s_1$ from~\eqref{e:span2} and~\eqref{e:span1} we can reduce the vector
\begin{equation*}
\left[\begin{pmatrix}
v_{\alpha, 1}\\
v_{\alpha, 1}
\end{pmatrix},
\begin{pmatrix}
v_{\beta, 2}\\
v_{\beta, 2}
\end{pmatrix}\right](x,y)
\end{equation*} 
to a vector $L_1$ with $0$ in the first two coordinates. Using trigonometric  identities we can write $L_1$ as a linear combination of trigonometric products in the form
\begin{align}
	\nonumber
	\MoveEqLeft
	L_1=\sin(2\pi(\alpha-x_1))\sin(2\pi(\beta-x_2)) \ell_1+\sin(2\pi(\alpha-x_1))\cos(2\pi(\beta-x_2)) \ell_2
	\\
	\nonumber
	&+\cos(2\pi(\alpha-x_1))\sin(2\pi(\beta-x_2)) \ell_3+\cos(2\pi(\alpha-x_1))\cos(2\pi(\beta-x_2)) \ell_4
	\\
	\nonumber
	&+\sin(2\pi(\alpha-y_1))\sin(2\pi(\beta-y_2)) \ell_5+\sin(2\pi(\alpha-y_1))\cos(2\pi(\beta-y_2)) \ell_6
	\\
	\label{e:trigprods}
	&+\cos(2\pi(\alpha-y_1)\sin(2\pi(\beta-y_2)) \ell_7+\cos(2\pi(\alpha-y_1))\cos(2\pi(\beta-y_2) )\ell_8 \,,
\end{align}
for some explicit vectors $\ell_i$, $i=1, 2,\ldots, 8$.
Similarly we can reduce 
\[
\begin{pmatrix}
v_{\alpha, 1}\\
v_{\alpha, 1}
\end{pmatrix},
\begin{pmatrix}
v_{\alpha, 2}\\
v_{\alpha, 2}
\end{pmatrix},
\left[\begin{pmatrix}
v_{\alpha, 1}\\
v_{\alpha, 1}
\end{pmatrix},
\begin{pmatrix}
v_{\beta, 1}\\
v_{\beta, 1}
\end{pmatrix}\right](x,y),
\hbox{ and }
\left[\begin{pmatrix}
v_{\alpha, 2}\\
v_{\alpha, 2}
\end{pmatrix},
\begin{pmatrix}
v_{\beta, 2}\\
v_{\beta, 2}
\end{pmatrix}\right](x,y)
\]
to the vectors
\begin{gather*}
	L_2=\sin(2\pi(\alpha-x_1))\ell_9\,,
	\quad L_3=\sin(2\pi(\alpha-x_2))\ell_{10}\,,
	\\
	L_4=\sin(2\pi(\alpha-\beta)\ell_{11}\,,
	\quad
	L_5=\sin(2\pi(\alpha-\beta))\ell_{12}\,,
\end{gather*}
respectively, for some explicit vectors~$\ell_9$, \dots, $\ell_{12}$, which all have a $0$ in their first two coordinates.

On the set $M$ since $x_1\neq y_1$ and $x_2\neq y_2$, the trigonometric products in~\eqref{e:trigprods} are linearly independent unless 
\[
x_1=y_1+\frac{1}{2} \quad\text{and}\quad x_2=y_2+ \frac{1}{2}  \,.
\]
We first consider the case when $x_1\neq y_1+1/2$ or $x_2\neq y_2+1/2$.
In this case we can use different choices of~$\alpha, \beta$ in~\eqref{e:trigprods} and take linear combinations to ensure each $\ell_i$ is contained in the Lie algebra at $(x, y)$.
In particular, the vectors~$\ell_5$ and~$\ell_8$ are contained in the Lie algebra, and we symbolically compute
		\begin{equation}\label{e:sin2ptvectors}
			\frac{\ell_5}{4\pi^2 \kappa^2 }=\begin{pmatrix}
				0\\
				0\\
				\partial_1 U(y)\\
				-\partial_2 U(y)
			\end{pmatrix},
			\quad
			\frac{\ell_8}{4\pi^2 \kappa^2}=\begin{pmatrix}
				0\\
				0\\
				-\partial_2 U(y)\\
				\partial_1 U(y)
			\end{pmatrix}\,.
		\end{equation}
Thus the Lie algebra has dimension $4$ unless
\[
\partial_1 U(y)=-\partial_2 U(y) \quad\text{or}\quad \partial_1 U(y)=\partial_2 U(y) \,.
\]
Symbolic computations (see~\cite{ChristieFengEA23}) show that in either case, the vector
\[
\begin{pmatrix}
	0\\
	0\\
	0\\
	1
\end{pmatrix}\,
\]
is contained in the span of the remaining $\ell_i$. 
Since in~$M$ both $\partial_1 U(y)$ and $\partial_2 U(y)$ can not vanish, using~$\ell_5$ or~$\ell_8$ from~\eqref{e:sin2ptvectors} shows that the Lie algebra has dimension~$4$ as desired.

Now consider the case where $x_1= y_1+1/2$ and $x_2= y_2+1/2$.
We rewrite~\eqref{e:trigprods} as 
\begin{align*}
L_1&=\sin(2\pi(\alpha-x_1))\sin(2\pi(\beta-x_2)) \ell'_1+\sin(2\pi(\alpha-x_1))\cos(2\pi(\beta-x_2)) \ell'_2
\\
	&\quad+\cos(2\pi(\alpha-x_1))\sin(2\pi(\beta-x_2)) \ell'_3+\cos(2\pi(\alpha-x_1))\cos(2\pi(\beta-x_2)) \ell'_4 \,,
\end{align*}
for some explicit vectors~$\ell_1'$, \dots, $\ell_4'$.
We symbolically compute and verify
\[
\begin{pmatrix}
0\\
0\\
16 \pi^3 \kappa^2\\
0
\end{pmatrix}=\ell_{11}\kappa^2 + \frac{\ell'_3}{\kappa}
\quad\text{and}\quad
\begin{pmatrix}
0\\
0\\
0\\
16 \pi^3 \kappa^2\\
\end{pmatrix}=\ell_{12}\kappa^2 -\frac{\ell'_2}{\kappa} \,,
\]
which implies that the Lie algebra is $4$ dimensional on $M$ if  $x_1= y_1+1/2$ and $x_2= y_2+1/2$. Thus, in either case the Lie algebra is~$4$ dimensional, and hence the two point process is irreducible.
\smallskip

We now show that the rescaled derivative process~\eqref{e: derivpros} is irreducible by showing that the Lie algebra at $(x, \mathit{Id})$ is $5$ dimensional on the set
\[
	M' \defeq \set{ x\in\T^2 \st \grad U(x) \neq 0 } \,.
\]
Since~$M'$ is connected, this will imply irreducibility of the rescaled derivative process exactly as in the proof of Lemma~\ref{kappa_deriv}. 

We consider the vector fields~\eqref{e:derivprocessvectors} with a sine stream function. That is, vectors with first two coordinates coming from the coordinates of~\eqref{e:valpha1} or~\eqref{e:valpha2} with a sine stream function, and last four coordinates coming from the corresponding matrix, $A_t(v, x)$ (defined in~\eqref{e:matrixAirred}).
By choosing $\alpha=x_2$, and $\alpha=x_1$ in~\eqref{e:derivprocessvectors} we compute
\begin{equation}\label{e:derivspan}
\tilde{s}_2\defeq -2\kappa \tilde{v}_{x_2, 2}=
 \begin{pmatrix}
 4 \pi \kappa\\
 0\\
 -2\pi \partial_1 U(x)\\
  -4\pi\partial_2 U(x)\\
  0\\
  2\pi\partial_1 U(x)
 \end{pmatrix} 
 \hbox{ and }
 \tilde{s}_1\defeq2\kappa \tilde{v}_{x_1, 1}=
 \begin{pmatrix}
 0\\
 2 \pi \kappa \\
 \pi\left(\partial_2U(x)-\partial_1 U(x)\right)\\
 0\\
 -4 \pi\partial_1 U(x)\\
 -2\pi\left(\partial_2U(x)-\partial_1 U(x)\right)
 \end{pmatrix}.
\end{equation}
By performing symbolic Gauss-Jordan elimination on the vectors $\tilde{v}_{\alpha, 1}$, $\tilde{v}_{\alpha, 2}$, $\tilde{s}_1$ and $\tilde{s}_2$, yields the vectors 
\[
\tilde{L}_1=\sin(2\pi(\alpha-x_1))\tilde{\ell}_1
\quad\text{and}\quad
\tilde{L}_2=\sin(2\pi(\alpha-x_1))\tilde{\ell}_2
\]
for some explicit vectors~$\tilde \ell_1$ and~$\tilde \ell_2$ whose first two coordinates are both~$0$.
We symbolically compute
\[
	\tilde s_3 \defeq 2(\tilde{\ell}_1-\tilde{\ell}_2)
	=
\begin{pmatrix}
0\\
0\\
1\\
2\\
2\\
1
\end{pmatrix} \,.
\]
Using the $\tilde{s}_i$ we symbolically compute and reduce the vectors 
\[ 
[\tilde{v}_{\alpha, 1}, \tilde{v}_{\beta, 2}], [\tilde{v}_{\alpha, 1}, \tilde{v}_{\beta, 1}], \hbox{ and } [\tilde{v}_{\alpha, 2}, \tilde{v}_{\beta, 2}]
\]
to the vectors 
\begin{align*}
	\tilde{L}_3
		&=\sin(2\pi(\alpha-x_1))\sin(2\pi(\beta-x_2)) \tilde{\ell}_3+\sin(2\pi(\alpha-x_1))\cos(2\pi(\beta-x_2)) \tilde{\ell}_4
\\
		&\qquad +\cos(2\pi(\alpha-x_1))\sin(2\pi(\beta-x_2)) \tilde{\ell}_5+\cos(2\pi(\alpha-x_1))\cos(2\pi(\beta-x_2)) \tilde{\ell}_6 \,,
	\\
	\span \tilde{L}_4 =\sin(2\pi(\alpha-\beta))\tilde{\ell}_7\,,
	\qquad\text{and}\qquad
	\tilde{L}_5=\sin(2\pi(\alpha-\beta))\tilde{\ell}_8 \,,
\end{align*}
respectively, for some explicit vectors $\tilde \ell_3$, \dots, $\tilde \ell_8$, which all have zeros in the first three coordinates.
By choosing $\alpha = x_1 + 1/4, x_1 - 1/4, x_2, x_1, x_2 - 1/4$, and taking Lie brackets, we obtain three more vectors
\begin{equation}\label{e:finalsindegen}
\tilde{v}_{x_1-1/4, 1}\,,
\quad [\tilde{v}_{x_1-1/4, 1}, \tilde{v}_{x_2, 2}]\,,
\quad [\tilde{v}_{x_1-1/4, 1}, \tilde{v}_{x_2-1/4, 2}]
\quad\text{and}\quad
[\tilde{v}_{x_1, 1}, \tilde{v}_{x_2, 2}] \,.
\end{equation}
Again using the $\tilde{s}_i$ and performing symbolic Gauss-Jordan elimination we obtain three new vectors, $\tilde{\ell_9}$, $\tilde{\ell}_{10}$, $\tilde{\ell}_{11}$, and $\tilde{\ell}_{12}$ which have zeros in the first three coordinates.
We now symbolically compute and check that the span of~$\operatorname{span}\set{\tilde \ell_1, \dots, \tilde \ell_{12} }$ also contains the vectors
\[
	\tilde \ell_{13} \defeq \begin{pmatrix}
0\\
0\\
0\\
\frac{1}{2}\left(8\pi^2 \kappa^2+\partial_2 U(x)^2-6\partial_1 U(x)\partial_2 U(x)+5\partial_1 U(x)^2\right)\\
2\partial_1 U(x)\left(\partial_1 U(x)-\partial_2 U(x)\right)\\
\partial_1 U(x)\left(\partial_1 U(x)-\partial_2 U(x)\right)
\end{pmatrix}
\]
and
\[
	\tilde \ell_{14} \defeq \begin{pmatrix}
0\\
0\\
0\\
\tilde \ell_{14,4}
\\
\partial_1 U(x)\partial_2 U(x)+\kappa\partial_{1,2} U(x)\\
\partial_1 U(x)\partial_2 U(x)+\kappa\partial_{1,2} U(x)
\end{pmatrix}\,,
\]
where
\begin{equation}
	\tilde \ell_{14,4} = \frac{1}{2}\bigl(
		-4\pi^2\kappa^2-\partial_2 U(x)^2-\kappa\lap U(x) +
			2\partial_2 U(x)\partial_1 U(x)-\partial_1 U(x)^2+2\kappa\partial_{1, 2} U(x)
		\bigr) \,.
\end{equation}
Clearly, the two vectors~$\tilde \ell_{13}$, $\tilde \ell_{14}$ are linearly independent unless
\[
\partial_1 U(x)\partial_2 U(x)+\kappa\partial_{1,2} U(x)=0~\hbox{ or }~\partial_1 U(x)\left(\partial_1 U(x)-\partial_2 U(x)\right)=0 \,.
\]
In either case we symbolically compute and explicitly verify that the span of the $\tilde{\ell}_i$ is $2$ dimensional (see~\cite{ChristieFengEA23} for details).
This shows that the Lie algebra is $5$ dimensional, and hence the rescaled derivative process is irreducible.
\end{proof}
\begin{remark}
If the potential is constant then the vectors~\eqref{e:sin2ptvectors} do not prove irreducibility of the two point process. However, similar symbolic computations can still be used to show that the two point process is irreducible.
\end{remark}

When $d>2$, we perform the $2$-dimensional dynamics on pairs of coordinates. That is, for a pair of indices $(i, j)$, $i<j$ we consider the velocity fields 
 \begin{gather}
		\label{e:highdvalphai}
		v_{\alpha, (i, j), 1}=
		\frac{1}{p}\grad^\perp_{i,j}(p F_\alpha(x_i))\,,
		\\
		\label{e:highdvalphaj}
		v_{\alpha, (i, j), 2}=
		\frac{1}{p}\grad^\perp_{i,j}(p F_\alpha(x_j))
		\,,
\end{gather}
where $\grad^\perp_{i,j}$ is the $2$-dimensional perpendicular gradient on the coordinate pair $(i, j)$ and $0$ in the other coordinates. We define
\[
V_n=\beta_n v_{\alpha_n, \xi_n, i_n}\,,
\] where  $\xi_n$ are i.i.d.\ uniform random variables on the set of pairs of coordinates, and $(\alpha_n, \beta_n, i_n)$ are again i.i.d.\ random variables uniformly distributed on the parameter space $[0, 1]\times [0, 1] \times \set{1, 2}$. All of the associated processes are defined identically to their $2$-dimension counterparts (see~\eqref{e:AlexsGreatestEquation}, \eqref{e: projpros}, and~\eqref{e: derivpros}).

\begin{proof}[Proof of Theorem~\ref{t:SinMix} in dimension $d \geq 3$]
The proof in high dimensions follows from the proof done in $2$ dimensions. Both the Feller property (Lemma~\ref{L: mufeller}) and positivity of the top Lyapunov exponent (Lemma~\ref{l: poslyaexpon}) are exactly the same as the proofs done in dimension $2$. Irreducibility of the one point and two point processes also follows immediately from Lemma~\ref{kappa_deriv}. For each pair of coordinates we can perform the same computations done in Lemma~\ref{kappa_deriv}. This implies that the span of the vector fields that generate the two point process has dimension $2d$, and so the one point and two point processes are irreducible.  

The added assumption in Theorem~\ref{t:SinMix} for high dimensions that~\eqref{e:highddegeneracycond} holds on a small ball, $B(\hat{x}, \hat{\epsilon})$ allows us to generalize the proof of irreducibility of the derivative and projective processes. Irreducibility follows if we show irreducibility on the subset $B(\hat{x}, \hat{\epsilon})$. Indeed, since the one point process is irreducible we can first move the position component into $B(\hat{x}, \hat{\epsilon})$, control the derivative without moving the position component out of $B(\hat{x}, \hat{\epsilon})$, and then move the position and derivative pair to the end condition. Thus, irreducibility on the set $B(\hat{x}, \hat{\epsilon})$ implies irreducibility of both processes.

Irreducibility of the processes on the set $B(\hat{x}, \hat{\epsilon})$ follows from counting the dimension of the Lie algebra of the rescaled derivative process. Consider the $d$-dimension counterparts to~\eqref{e:matrixAirred} and~\eqref{e:derivprocessvectors}. We will refer to the coordinate that corresponds to the $(i, j)$ entry of $A_t$ as coordinate $(i, j)$. Since the potential is separable on $B(\hat{x}, \hat{\epsilon})$, the Lie bracket of the $d^2+d$-dimensional counterparts to~\eqref{e:derivprocessvectors} can only be non-zero in 6 positions. That is, for a pair of coordinates $i<j$
\[[\tilde{v}_{\alpha, (i, j), 1}, \tilde{v}_{\alpha, (i, j), 2}]\]
can be non-zero in the $i$, $j$, $(i, i)$, $(j, j)$, $(i, j)$, and  $(j, i)$ coordinates. Thus, for each pair of coordinates we can perform the same computations done in the $2$ dimensional cases. 

First, consider the case of tent shears. Clearly, each coordinate gives rise to a linearly independent vector (vectors $1$ and $2$ in~\eqref{e:derivvectors}). For each pair $i<j$ of coordinates we obtain $2$ vectors with nonzero entries in the $(i, j)$ and $(j, i)$ coordinates (vectors $3$ and $4$ in~\eqref{e:derivvectors}) that are not in the span of the first $d$ vectors. This gives $d^2$ linearly independent vectors in total. The computations in Lemma~\ref{kappa_deriv} for the coordinate pair $i, i-1$ add another $d-1$ vectors to the collection of linearly independent vectors (5\textsuperscript{th} vector found in Lemma~\ref{kappa_deriv}). Thus the dimension of the Lie algebra of the rescaled derivative process is $d^2+d-1$ on $B(\hat{x}, \hat{\epsilon})\times\mathit{Id}$ and so both the derivative and projective processes are irreducible (see Lemma~\ref{kappa_deriv} for details). The conclusion of Theorem~\ref{t:SinMix} now follows from the same exact argument done in the earlier parts of this section. Counting the dimension of the Lie algebra in the case of sine shears is similar, and omitted for brevity.
\end{proof}

	\appendix

	\section{Lebesgue Exponential Mixing of Randomly Shifted Tent Flows}\label{s:appendixmixing}
	Studying mixing rates of incompressible flows is an area of active research~\cite{
		Bressan06,
		CrippaDeLellis08a,
		Thiffeault12,
		Seis13,
		IyerKiselevEA14},
	and examples of exponentially mixing flows were only constructed recently~\cite{
		YaoZlatos17,
		AlbertiCrippaEA19,
		ElgindiZlatos19,
		BedrossianBlumenthalEA22,
		BlumenthalCotiZelatiEA22,
		Cooperman22
	}.
	We note that when~$F$ is the sawtooth shear~\eqref{e:psi0} (shown in Figure~\ref{f:sawTooth}), then Theorem~\ref{t:SinMix} still holds when~$U = 0$.
	In this case, $\mu$ is simply the Lebesgue measure, and hence the flow~$v$ (defined in~\eqref{e:vDefAltShear}, with~$F$ as in~\eqref{e:psi0}) is almost surely (Lebesgue) exponentially mixing.
	The proof of Theorem~\ref{t:SinMix}, however, involves technical calculations to check irreducibility.
	If instead of using the sawtooth profiles~\eqref{e:psi0}, we use a localized tent function, then the proof of irreducibility becomes extremely simple.
	We devote this appendix to presenting this, and hence obtain a short, simple, proof showing a family of incompressible flows is almost surely (Lebesgue) exponentially mixing.


	\begin{figure}[htb]
		\centering
		\includegraphics[width=.5\linewidth]{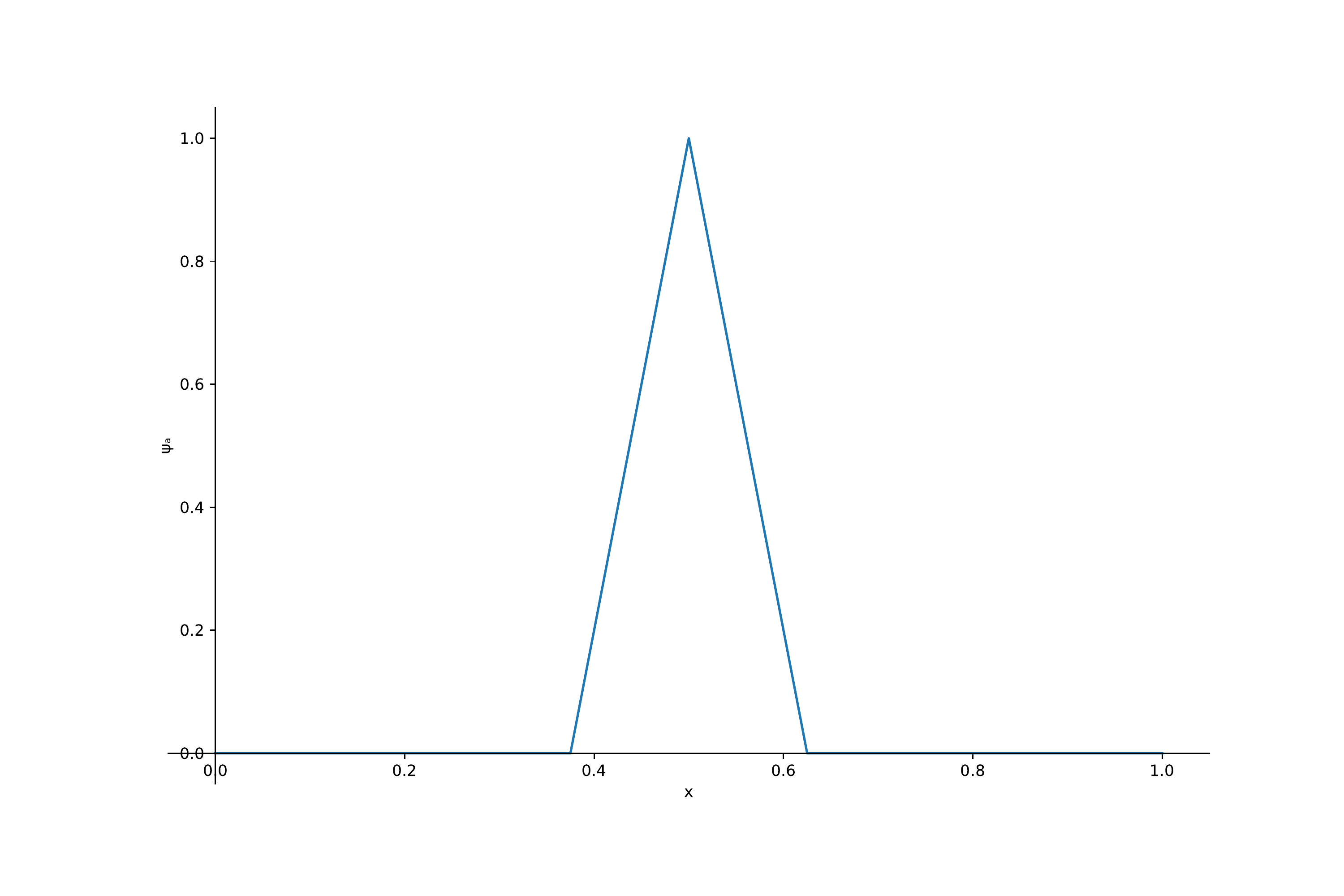}
		\caption{Profile of the localized tent function~$\psi_0$.}
		\label{fig:M1}
	\end{figure}
	Let $F_0$ be the localized tent function shown in Figure~\ref{fig:M1}.
	Explicitly, $F_0$ a piece-wise linear periodic function such that
	\begin{equation}\label{e:localTent}
		F_{0}\left(\frac{1}{2}\pm \frac{1}{8}\right)=0 \,,
		\quad
		F_{0}\left(\frac{1}{2}\right)=1 \,,
		\quad\text{and}\quad
		F_{0}(x)=0 \hbox{ for } x\in\Bigl[\frac{3}{8}, \frac{5}{8}\Bigr]^{\comp} \,.
	\end{equation}
	For $\alpha\in[0, 1]$ define 
	\begin{equation}\label{e:psilebesgue}
		F_{\alpha}(x)=F_0(x-\alpha)\,,
	\end{equation}
	so that $F_{\alpha}$ is a localized tent function with peak at $1/2 + \alpha$.
	\GI[2023-03-28]{Peak is at $1/2 + \alpha$ not~$\alpha$}
	Define the associated horizontal and vertical shear flows by
	\begin{equation}\label{e:localtentvectors}
		v_{\alpha, 1}(x)=
		\begin{pmatrix}
			F(x_2 - \alpha)\\
			0
		\end{pmatrix}\,,
		\quad\text{and}\quad
		v_{\alpha, 2}(x)=
		\begin{pmatrix}
			0\\
			F(x_1 - \alpha)
		\end{pmatrix}\,.
	\end{equation}
	respectively.
	We will now show that if we randomly shift (and modulate) the localized tent shears flows~$v_{\alpha, i}$ then we are (Lebesgue) exponentially mixing almost surely.

	\begin{theorem}\label{t: lebmixingflow}
		Define the velocity field~$v$ by
		\begin{equation}
			v_t = \beta_n v_{\alpha_n, i_n}
			\qquad\text{when } t \in [n, n+1)\,,
		\end{equation}
		where $(\alpha_n, \beta_n, i_n)$ are i.i.d.\ random variables that are uniformly distributed on parameter space $[0, 1]\times [0, 1] \times \set{1, 2}$.
		The flow of~$v$ is almost surely (Lebesgue) exponentially mixing with rate~\eqref{e:expMixRate}.
	\end{theorem}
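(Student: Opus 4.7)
The plan is to verify the five hypotheses of Proposition~\ref{T:micheleMixing} with $\mu$ equal to Lebesgue measure on $\T^2$, so that the conclusion of that proposition immediately gives~\eqref{e:expMixRate}. The ``soft'' hypotheses transfer essentially verbatim from Section~\ref{s:modifiedmixing}: aperiodicity of the one-point, projective, and two-point processes holds because taking $\beta_n$ arbitrarily small keeps every state inside any prescribed neighborhood of its initial value with positive probability; the Feller property for the three processes follows exactly as in Lemma~\ref{L: mufeller}, using that the only discontinuities of $\varphi^{V_1}$ are along lines $x_i=\alpha+c$ that move continuously with $\alpha$, so the expectation in the parameter $\alpha$ smooths out these jumps; and the existence of a Lyapunov function of the form~\eqref{e: lyapunovfunctionform} together with $\mathcal V$-geometric ergodicity of the two-point process are structural consequences of irreducibility, the Feller property, and positivity of the Lyapunov exponent, by Sections~5--6 of~\cite{BedrossianBlumenthalEA22}.

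The only place where the localization of $F_0$ gives a genuine simplification is in the irreducibility step, and this is where I would put the effort. Since $\supp F_0 \subset [3/8, 5/8]$, the shear $v_{\alpha,1}$ moves only points whose second coordinate lies in the translated support $[3/8+\alpha, 5/8+\alpha] \pmod 1$, and similarly for $v_{\alpha,2}$. Consequently, on the connected dense set
\begin{equation*}
  M \defeq \mathcal D^{\comp} \setminus \set{(x,y) \st x_1 = y_1 \text{ or } x_2 = y_2}\,,
\end{equation*}
we can always choose $\alpha$ so that exactly one of the two coordinates $x_i$, $y_i$ lies in the support while the other does not; the resulting map in the two-point process moves one particle and fixes the other. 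Alternating this construction between the two families $v_{\alpha,1}$ and $v_{\alpha,2}$ lets us steer $x$ and $y$ independently, which directly produces controls connecting any two configurations in $M$ without invoking the Rashevsky--Chow theorem or any Lie-bracket computation. Irreducibility of the one-point process follows a fortiori. For the projective and rescaled derivative processes, I would observe that choosing $\alpha$ so that the active coordinate sits inside the ascending (respectively descending) linear piece of $F_\alpha$ yields a derivative cocycle equal to the unipotent matrix
\begin{equation*}
  \begin{pmatrix} 1 & \pm c \\ 0 & 1 \end{pmatrix}
  \quad\text{or its transpose,}
\end{equation*}
and the two one-parameter subgroups generated by these unipotents generate all of $\SL_2(\R)$; combined with the base-point irreducibility above, this gives irreducibility on $\T^2 \times \Pro$ and $\T^2 \times \SL_2(\R)$.

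Positivity of the top Lyapunov exponent then follows from Furstenberg's criterion (Theorem~\ref{T: Furst}) by the same argument as in Lemma~\ref{l: poslyaexpon}: controllability of the rescaled derivative process allows us to approximate any element of $\SL_2(\R)$, and in particular matrices that contract an arbitrary open arc of $\Pro$ onto a single point, which is incompatible with the existence of a measurable family of invariant projective measures $\set{\nu_x}$ unless each $\nu_x$ is simultaneously atomic at two distinct points, a contradiction. The step I would expect to require the most care is verifying that the localization argument in $M$ really produces smooth control maneuvers near the boundary of $M$ (i.e.\ that degeneracies along $\set{x_1 = y_1} \cup \set{x_2 = y_2}$ do not obstruct density of the reachable set), but this is handled automatically once the controllability argument above is phrased as an open mapping statement, since the reachable set from any point in $M$ contains an open neighborhood and the one-point irreducibility lets us first move inside $M$ with positive probability. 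Once all five hypotheses are in place, Proposition~\ref{T:micheleMixing} delivers Theorem~\ref{t: lebmixingflow}.
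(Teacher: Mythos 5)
Your proposal follows the paper's proof almost exactly: both verify the hypotheses of Proposition~\ref{T:micheleMixing}, both get Feller and aperiodicity by the argument of Lemma~\ref{L: mufeller}, and both prove two-point irreducibility by exploiting the localized support of $F_0$ to move one particle while freezing the other, avoiding Rashevsky--Chow entirely (this is precisely Lemma~\ref{l:leb2pointirr}). The one step where you take a different route is the positive Lyapunov exponent: you go through Furstenberg's criterion (Theorem~\ref{T: Furst}) via controllability of the rescaled derivative process, mimicking Lemma~\ref{l: poslyaexpon}, whereas the paper observes that for pure shears the derivative increments form an i.i.d.\ sequence of $\SL_2(\R)$ matrices (their law does not depend on the base point), so the classical Furstenberg theorem for i.i.d.\ matrix products (Theorem~\ref{T: ogfurst}) applies directly --- no controllability of the derivative process is needed for this step. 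Both arguments are valid; the paper's is more economical and is the main payoff of working with unmodified shears. One caution on your projective-process irreducibility: ``the unipotents generate $\SL_2(\R)$, combined with base-point irreducibility'' elides the fact that whenever a shear moves the direction $u$ it also displaces the base point, so position and derivative cannot be adjusted independently; the paper's Lemma~\ref{l:lebprojirr} resolves this with explicit bookkeeping (arranging $d_1-u_1'\equiv 0 \pmod 8$ and finishing with a shear that simultaneously lands the point at $y$ and the direction at $u'$), and your sketch would need the same care to be complete.
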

	\begin{remark}
		If instead of localized tent shears~\eqref{e:localTent}, we use sawtooth shears, then Theorem~\ref{t:SinMix} already guarantees~$v$ is almost surely (Lebesgue) exponentially mixing.
		The reason we choose localized tent shears here is because the proof of irreducibility is simpler, and does not rely on the Rachevsky--Chow Theorem (Theorem~\ref{T: chow}).
		The local property makes the argument direct and highlights the fact that for a collection of vector fields to be almost surely mixing you must be able to control $2$ points independently.
	\end{remark} 
	\begin{remark}
		In dimensions~$d > 2$, Lebesgue exponentially mixing flows can be constructed by setting
		\begin{equation}
			v_t = \grad^\perp_{i_n,j_n} F_{\alpha_{i_n}}(x_{j_n})\,,
		\end{equation}
		where~$\grad^\perp_{i,j}$ is the skew gradient in the $x_i$-$x_j$ plane (see~\eqref{e:GradPerpij}).
	\end{remark}

	\begin{proof}[Proof of Theorem~\ref{t: lebmixingflow}]
		As in Section~\ref{s:modifiedmixing}, define~$V_n$ by~\eqref{e:VnDef}, with~$v_{\alpha, i}$ as in~\eqref{e:localtentvectors},
		and consider the Markov process~$X_n$ defined by~\eqref{e:AlexsGreatestEquation}.
		Using the same notation as Section~\ref{s:modifiedmixing}, define the random flow~$\phi_n$ by~\eqref{e:phidef}.

		We will prove Theorem~\ref{t: lebmixingflow} by showing the conditions of Proposition~\ref{T:micheleMixing} hold.
		As in Section~\ref{s:modifiedmixing} we need to show that the one point process~\eqref{e:AlexsGreatestEquation} has a positive Lyapunov exponent, and that the one point process~\eqref{e:AlexsGreatestEquation}, two point process~\eqref{def: 2pointp}, and projective process~\eqref{e: projpros} are all irreducible and Feller. 

		Since each velocity field is a shear the one point process is clearly irreducible. The existence of a positive Lyapunov exponent is immediate due to the classical result of Furstenberg~\cite{Furstenberg63}, which we quote below.
		The version we state is Theorem 4.1 in~\cite{BougerolLacroix85}.
		\begin{theorem}\label{T: ogfurst}
			Let $G_n$ be a sequence of IID random matrices in $\SL_2(\R)$ with probability measure $\mu$. If the smallest closed subgroup containing the support of $\mu$ is not compact and $G_n$ does not leave a set of one or two lines invariant then there exists a constant $\lambda^+>0$ such that for any $u\in\R^2$,
			\[
			\lim_{n\to +\infty} \frac{1}{n}\ln \norm{ G_n G_{n-1}\cdots G_1 u}=\lambda^+,
			\]
			almost surely.
		\end{theorem}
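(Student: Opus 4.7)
The plan is to prove Furstenberg's theorem via the standard route: existence of~$\lambda^+$ by Kingman's subadditive ergodic theorem, a Furstenberg integral formula involving a stationary measure on the projective line~$\mathbb{P}^1$, and a rigidity argument ruling out~$\lambda^+ = 0$ under the two hypotheses. For the first step, submultiplicativity of the operator norm makes $X_n \defeq \log\norm{G_n G_{n-1}\cdots G_1}$ subadditive, and under the implicit integrability condition $\E[\log^+\norm{G_1}] < \infty$ Kingman's theorem produces a deterministic limit
\[
	\lambda^+ = \lim_{n\to\infty} \frac{X_n}{n}
\]
almost surely. Since $\det G_n = 1$ forces $X_n \geq 0$, we already have $\lambda^+ \geq 0$, and the statement that $\lim \frac{1}{n}\log\norm{G_n\cdots G_1 u} = \lambda^+$ for every fixed $u \neq 0$ will follow a.s.\ once the distribution of the slow Oseledets direction on~$\mathbb{P}^1$ is shown to be non-atomic.

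Next, since~$\mathbb{P}^1$ is compact, Krylov--Bogolyubov produces a $\mu$-stationary probability measure~$\nu$ on~$\mathbb{P}^1$ for the induced Markov chain $\bar X_n = \overline{G_n\cdots G_1 \bar v_0}$. Applying the Birkhoff ergodic theorem to the skew product $(\SL_2(\R)^{\N} \times \mathbb{P}^1, \mu^{\otimes\N} \otimes \nu)$ with the scalar cocycle $\sigma(G, \bar v) = \log(\norm{Gv}/\norm{v})$ yields Furstenberg's integral formula
\[
	\lambda^+ = \int_{\SL_2(\R)} \int_{\mathbb{P}^1} \log \frac{\norm{G v}}{\norm{v}} \, d\nu(\bar v) \, d\mu(G)\,.
\]
This identifies~$\lambda^+$ with an integral computable from any stationary~$\nu$, and sets up the contradiction in the third step.

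The heart of the argument is rigidity. Assume $\lambda^+ = 0$. A reverse-martingale argument applied to the scalar cocycle above---the increment $\log(\norm{G_n\cdots G_1 v}/\norm{G_{n-1}\cdots G_1 v})$ admits a Doob decomposition into a martingale plus a drift, and the drift must vanish since $\lambda^+ = 0$---forces~$\nu$ to be invariant under every element~$g$ of the closed subgroup~$G_\mu \subseteq \SL_2(\R)$ generated by the support of~$\mu$. One then invokes the following $\SL_2(\R)$-rigidity dichotomy: any probability measure on~$\mathbb{P}^1$ whose stabilizer in $\SL_2(\R)$ is non-compact must be atomic and supported on at most two points. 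Indeed, the $KAK$-decomposition together with non-compactness of~$G_\mu$ yields a sequence $g_n \in G_\mu$ with $\norm{g_n} \to \infty$, whose projective action has loxodromic north--south dynamics in the limit; invariance $g_{n,*}\nu = \nu$ then forces~$\nu$ to concentrate on the attracting and repelling fixed points, giving at most two atoms. Thus~$G_\mu$ leaves a set of at most two lines in~$\R^2$ invariant, contradicting the second hypothesis, and hence $\lambda^+ > 0$. The main obstacle is this rigidity step---particularly the martingale argument extracting genuine $G_\mu$-invariance of~$\nu$ from mere vanishing of the Lyapunov exponent, which requires delicate analysis of the tail $\sigma$-algebra of the reversed random product; the remaining ingredients (Kingman, Krylov--Bogolyubov, the $KAK$-decomposition, and the upgrade from a.e.~$u$ to every~$u$ via non-atomicity of~$\nu$) are classical.
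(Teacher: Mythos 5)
The paper does not prove this statement at all: it is quoted verbatim as a classical theorem of Furstenberg~\cite{Furstenberg63}, in the form given as Theorem~4.1 of~\cite{BougerolLacroix85}, and is used as a black box to get positivity of the top Lyapunov exponent for the random products of shear matrices in Appendix~\ref{s:appendixmixing}. So there is no in-paper argument to compare against; what you have written is an outline of the standard textbook proof (Furstenberg--Kesten/Kingman for existence, the stationary measure and Furstenberg's integral formula, and the rigidity dichotomy for measures on $\Pro$ with non-compact stabilizer). The architecture is correct and matches the proof in Bougerol--Lacroix, so as a roadmap it is fine.

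That said, if this were meant as an actual proof rather than a citation, two steps are asserted where the real work lies. First, the upgrade from convergence of $\tfrac1n\ln\norm{G_n\cdots G_1}$ to convergence of $\tfrac1n\ln\norm{G_n\cdots G_1 u}$ for \emph{every} fixed $u\neq 0$ requires showing that every $\mu$-stationary measure on $\Pro$ is proper (gives zero mass to each point); this is a consequence of the strong irreducibility hypothesis (no invariant set of one or two lines) via the finite-set-of-maximal-atoms argument, and you should supply it rather than defer it to ``non-atomicity of the slow Oseledets direction,'' whose law is itself a stationary measure for the transposed/inverted process. Second, the rigidity step is described as a ``Doob decomposition with vanishing drift,'' which is not quite the mechanism: the standard argument runs the measure-valued martingale $\nu_n=(G_1\cdots G_n)_*\nu$, uses martingale convergence to a random limit $\nu_\omega$, and shows that $\lambda^+=0$ forces the law of $\nu_\omega$ to be concentrated on a single measure invariant under the closed group $G_\mu$; only then does the north--south/$KAK$ dichotomy apply. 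Neither issue is a wrong turn, but both are precisely the ``delicate'' points you flag without resolving, so the proposal is an accurate outline rather than a complete proof. Given that the paper treats the result as classical, citing~\cite{Furstenberg63} or~\cite{BougerolLacroix85} is the appropriate resolution.
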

		Since $D_x\phi_n$ is independent of $x$ it is a random product of shear matrices and so Theorem~\ref{T: ogfurst} implies that the Lyapunov exponent is positive.
		(We remark that this argument is not specific to the localized tent shears, and the randomly shifted sawtooth shears also have a positive Lyapunov exponent for the same reason.)

		Both processes are Feller by the same argument done in Lemma~\ref{L: mufeller}.
		We show irreducibility of the two point process and of the projective process in the following two Lemmas.
		\begin{lemma}\label{l:leb2pointirr}
			The two point process~\eqref{def: 2pointp} is irreducible. 
		\end{lemma}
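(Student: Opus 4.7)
The plan is to exploit the local support of the tent profile to get explicit independent control of the two points, bypassing the Rachevsky--Chow machinery used in Section~\ref{s:modifiedmixing}. Recall that $F_\alpha$ is supported in the strip $[\alpha + 3/8, \alpha + 5/8]$ of width $1/4$, so $v_{\alpha,1}$ acts only on points whose second coordinate lies in this strip (and symmetrically for $v_{\alpha,2}$). The strategy is to show that for any $(x,y) \in \mathcal{D}^c$ and any open set $B \subset \mathcal{D}^c$, there is a finite $n$ and a positive-measure set of parameter sequences $(\alpha_k, \beta_k, i_k)_{k=1}^n$ for which $\phi_n$ sends $(x,y)$ into $B$.

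First I would handle the generic case where $x_1 \neq y_1$ and $x_2 \neq y_2$. Since $x_2 \neq y_2$ on the torus, the set
\begin{equation*}
	\set{\alpha \in [0,1] \st F_\alpha(x_2) > 0 \text{ and } F_\alpha(y_2) = 0}
\end{equation*}
is a nonempty open interval (its complement must contain $y_2 - 5/8 \pmod 1$ but omit $x_2 - 5/8$). For any such $\alpha$, applying $\beta v_{\alpha,1}$ displaces $x_1$ by $\beta F_\alpha(x_2)$ while leaving $y$ fixed; varying $\beta$ over an open subset of $[0,1]$ sweeps $x_1$ over an open interval. The analogous statement, with the roles of $x$ and $y$ swapped (using $x_2 \neq y_2$) or with $i=2$ (using $x_1 \neq y_1$), gives independent control of $y_1$, $x_2$, and $y_2$. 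Concatenating at most four such single-coordinate moves steers $(x,y)$ into any prescribed open set, and since the parameters are drawn from a positive-density distribution, the event that all required parameters fall in the prescribed open intervals has positive probability.

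Next I would address the degenerate initial configurations where $x_1 = y_1$ or $x_2 = y_2$ (but not both, since $x \neq y$). Say $x_1 = y_1$ and $x_2 \neq y_2$. Choosing $\alpha$ with $x_2$ in the support strip of $F_\alpha$ and $y_2$ outside, the shear $\beta v_{\alpha,1}$ moves $x_1$ while fixing $y$, so after one step both coordinates disagree and we are in the generic case. The case $x_2 = y_2$ is symmetric via $v_{\alpha,2}$.

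The only potential obstacle is the circular-distance subtlety: if $x_2$ and $y_2$ are extremely close, the open interval of admissible $\alpha$ shrinks, but it is still nonempty for any $x_2 \neq y_2$, so the argument goes through uniformly (only the size of the positive-probability set depends on $(x,y)$, which is irrelevant for irreducibility). This construction yields, for each target open set and each starting state, a finite-length positive-probability path, establishing $\psi$-irreducibility with $\psi$ the Lebesgue measure on $\mathcal{D}^c$.
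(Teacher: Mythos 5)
Your proposal is correct and follows essentially the same route as the paper's proof: exploit the local support of the tent profile so that a suitably shifted shear displaces one coordinate of $x$ while leaving $y$ entirely fixed, then concatenate such single-coordinate moves and invoke the positive density of the parameters. The only imprecision is the claim that \emph{at most four} such moves suffice — a single application displaces $x_1$ only by $\beta F_\alpha(x_2)$, which is forced to be small when $x_2$ and $y_2$ are close (since $\alpha$ must keep $y_2$ outside the support of $F_\alpha$), so, exactly as in the paper, each coordinate generally requires finitely many \emph{repeated} applications of the move; this does not affect irreducibility.
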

	\begin{lemma}\label{l:lebprojirr}
		The projective process~\eqref{e: projpros} is irreducible.
	\end{lemma}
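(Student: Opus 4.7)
The plan is to apply the Rashevsky--Chow theorem (Theorem~\ref{T: chow}) to the lifts $\tilde v_{\alpha,1}$, $\tilde v_{\alpha,2}$ of the vector fields $v_{\alpha,1}$, $v_{\alpha,2}$ to the extended state space $\T^2 \times \Pro$, mirroring the approach used for the rescaled derivative process in the proof of Lemma~\ref{kappa_deriv}. The goal is to show that the Lie algebra generated by these lifted vector fields spans the three-dimensional tangent space at every point of a dense connected subset of $\T^2 \times \Pro$. Because $U \equiv 0$, the computation is considerably shorter than the one carried out in Section~\ref{s:modifiedmixing}.

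Parameterizing $\Pro$ by an angle $\theta \in [0,\pi)$, the lifted vector fields on the interior of each smooth piece of $F$ take the simple form
\begin{align*}
\tilde v_{\alpha,1}(x,\theta) &= \bigl(F(x_2-\alpha),\, 0,\, -F'(x_2-\alpha)\sin^2\theta\bigr), \\
\tilde v_{\alpha,2}(x,\theta) &= \bigl(0,\, F(x_1-\alpha),\, F'(x_1-\alpha)\cos^2\theta\bigr),
\end{align*}
where the third component records the infinitesimal rotation induced on $\Pro$ by the shear, and we have used that $F''\equiv 0$ on the interior of each piece of the localized tent. I would first compute the Lie bracket $[\tilde v_{\alpha,1}, \tilde v_{\alpha',2}]$, whose $\theta$-component is proportional to $F'(x_2-\alpha)\,F'(x_1-\alpha')\,\sin 2\theta$ and whose $\T^2$-component is the same two-dimensional bracket appearing in the irreducibility argument for the two-point process. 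By choosing shifts $\alpha,\alpha'$ so that every factor $F(\cdot)$, $F'(\cdot)$ is non-zero (which is possible on a positive-measure set of $\alpha,\alpha'$ for every $x$), the three vectors $\tilde v_{\alpha,1}$, $\tilde v_{\alpha',2}$, $[\tilde v_{\alpha,1}, \tilde v_{\alpha',2}]$ become linearly independent at $(x,\theta)$ whenever $\sin 2\theta \neq 0$, which I would verify by expanding a $3\times 3$ determinant.

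The main obstacle is that the Lie bracket above degenerates on the lower-dimensional locus $\{\sin 2\theta = 0\}$ of horizontal and vertical directions; moreover horizontal (resp.\ vertical) shears fix the horizontal (resp.\ vertical) direction, so Rashevsky--Chow on the open dense complement does not immediately yield controllability at these exceptional points. This is handled by the elementary observation that a single shear of the opposite type, applied with a non-zero coefficient (which occurs on an open positive-measure set of parameters), rotates $\theta$ off the degenerate locus in one step. Combining this one-step escape with Rashevsky--Chow controllability on the open dense set where the Lie algebra has full rank yields irreducibility of the projective process on all of $\T^2 \times \Pro$.
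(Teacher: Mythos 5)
Your approach is genuinely different from the paper's. You propose to lift the shear fields to $\T^2\times\Pro$ and apply the Rashevsky--Chow theorem, computing the Lie algebra of the lifted fields as in Lemma~\ref{kappa_deriv}. The paper instead gives a direct control argument, exhibiting an explicit sequence of parameters $(\alpha_n,\beta_n,i_n)$ that carries $(x,u)$ to $(y,u')$ by chaining shear matrices $A_\beta$ and $B_\beta$. Note that the paper's stated reason for using localized tent flows in this appendix is precisely that it makes the Lie-algebra computation unnecessary: \emph{``The reason we choose localized tent shears here is because the proof of irreducibility is simpler, and does not rely on the Rachevsky--Chow Theorem.''} Your proposal reintroduces the very machinery the appendix was designed to avoid, which is not wrong, but does work against the pedagogical point being made.

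There is also a genuine gap in the argument as written. The set $\{\sin 2\theta\neq 0\}$ on which you invoke Rashevsky--Chow is \emph{disconnected}: $\Pro\cong\R/\pi\Z$ is a circle, so deleting the two points $\{0,\pi/2\}$ leaves two disjoint open arcs, and $\T^2\times(\Pro\setminus\{0,\pi/2\})$ has two connected components. Rashevsky--Chow yields controllability only within each component. Your ``one-step escape'' observation handles starting \emph{on} the degenerate locus but does not address moving \emph{between} the two components, which is required for irreducibility. The fix is straightforward but must be stated: a time-one shear acts on directions via $\tan\theta\mapsto\tan\theta\pm 8\beta$, so a horizontal shear can carry a direction $\theta\in(0,\pi/2)$ with $\tan\theta$ small past $\theta=0\equiv\pi$ into $(\pi/2,\pi)$, and a vertical shear does the reverse; this, combined with within-component controllability, connects the two components. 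A secondary (minor) issue is that linear independence of $\tilde v_{\alpha,1}$, $\tilde v_{\alpha',2}$, and $[\tilde v_{\alpha,1},\tilde v_{\alpha',2}]$ is not automatic whenever $\sin 2\theta\neq 0$ --- at, say, $\theta=\pi/4$ with $F'(x_1-\alpha')F(x_2-\alpha)=F'(x_2-\alpha)F(x_1-\alpha')$ the determinant vanishes --- so you do need to argue that a suitable $(\alpha,\alpha')$ with nonzero determinant exists, as you promise to do but do not carry out.
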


		This concludes the proof of Theorem~\ref{t: lebmixingflow}, modulo the above lemmas.
	\end{proof}

	It remains to prove Lemmas~\ref{l:leb2pointirr} and~\ref{l:lebprojirr}.
	We recall that the proof of irreducibility for Theorem~\ref{t:SinMix} (Lemma~\ref{kappa_deriv}), involved the Rachevsky--Chow theorem, and technical calculations that were checked symbolically.
	For localized tent flows and the Lebesgue measure, the proofs are short and simple.

	\begin{proof}[Proof of Lemma~\ref{l:leb2pointirr}]
		We show that given $(x, y), (w, z)\in\mathcal{D}^c$ we can find a sequence parameters 
		\[
		\alpha_n, \beta_n\in[0, 1], \hbox{ }i_n\in\{1, 2\}
		\] so that the composition of the time $1$ flows of the vector fields $\beta_n v_{\alpha_n, i_n}$, defined in~\eqref{e:localtentvectors}, map $x$ to $w$ and $y$ to $z$. Continuity in $\alpha$ and $\beta$ then implies irreducibility of the two point process.
		The key observation of the proof is that the vector fields~\eqref{e:localtentvectors} can move the coordinates of two points $(x, y)\in\mathcal{D}^c$ independently.

		Fix position $(x, y), (w, z)\in\mathcal{D}^c$ and let $d(\cdot, \cdot)$ be the distance on $S^1$.
		Suppose without loss of generality that $x_2\neq y_2$. Then we can fine an open interval $I=(a, b)\subset [0, 1]$ so that for all $\alpha\in I$ 
		\[
		  F_\alpha(x_2)=8(x_2-\alpha) \quad\text{and}\quad F_\alpha(y_2)=0\,.
		\]
		Furthermore $F_\alpha(x_2)$ at one of the endpoints of $I$ is $0$. That is, $F_{a}(x_2)=0$ or $F_{b}(x_2)=0$. Thus, we can find a vector field from~\eqref{e:localtentvectors} whose time one flow translates $x_1$ without changing $x_2$, $y_1$, and $y_2$. By repeatedly selecting $\alpha\in I$, we can map $x_1$ to $w_1$ without changing any other coordinate. By repeating this process with each coordinate we can map $(x, y)$ to $(w, z)$ as desired.
	\end{proof}
	\begin{proof}[Proof of Lemma~\ref{l:lebprojirr}]
		We show that for any two elements, $(x, u), (y, u')\in\T^2\times\Pro$, there is a sequence of $\alpha_n, \beta_n\in[0, 1]$, $i_n\in\{1, 2\}$ such that the composition of the time $1$ flows of the vector fields, $\beta_n v_{\alpha_n, i_n}$, map $x$ to $y$. Furthermore, the derivative of the composition of time one flows map $u$ to $u'$. Continuity in $\alpha$ and $\beta$ implies irreducibility of the projective process.
		The lemma follows by observing that for any $x\in\T^2$ we can find $v_n$, defined in~\eqref{e:localtentvectors}, such that the derivative matrix at $x$ is given by 
		\begin{equation}\label{e:lebderivmatrices}
			A_{\beta} = \begin{pmatrix}
				1 & 8\beta\\
				0  & 1
			\end{pmatrix}
			\quad\text{or}\quad
			B_{\beta} = \begin{pmatrix}
				1 & -8\beta\\
				0  & 1
			\end{pmatrix},
		\end{equation}
		which is notably independent $\alpha$. 
		Consider the projective process as elements of $\R^2 - \{0\}$ under the equivalence relation $u\sim u'$ if and only if $u=cu'$ for some constant $c$.
		For $u\in\Pro$, $u\neq (1, 0)$ we can rescale $u$ so that the second coordinate is $1$. Fix $(x, u), (y, u')\in \T^2\times\Pro$. Without loss of generality assume that neither $u$ nor $u'$ point in the direction $\bm e_1\defeq (1, 0)$.
		Indeed, suppose that they they do. Then we can select a vertical shear so that $(x, u)$ is mapped to say, $(x_0, u_0)\in \T^2\times\Pro$ where $u_0\neq \bm e_1$. Similarly, we can select a vertical shear that maps $(y_0, u_0')\in \T^2\times\Pro$, $u_0'\neq \bm e_1$, to $(y, u')$. By mapping $(x_0, u_0)$ to $(y_0, u_0')$ we have a sequence of shears that maps $(x, u)$ to $(y, u')$. We may further assume without loss of generality that $u_1+8<u'_1$. This is possible by repeatedly selecting vector fields whose derivative at $x$ is $B_\beta$, defined in~\eqref{e:lebderivmatrices}.

		We now give the proof. First map $x_2$ to $y_2$ without mapping $u$ to $\bm e_1$. This is possible since for any $\beta\in[0, 1]$ there exists $\alpha$ so that the time $1$ flow of $\beta v_{\alpha, 2}$ maps $x_2$ to $y_2$. Select a vector field $v=\beta v_{\alpha, 2}$ such that \[D_x \varphi^v(x) u=A_{\beta} u= d=\begin{pmatrix}
			u_1+8\beta\\
			1
		\end{pmatrix}\] and $d_1-u'_1\equiv 0 \mod 8$. By repeatedly selecting vector fields so that the derivative is always $A_1$ we can map $(x, u)$ to \[\left(z, \begin{pmatrix}
			u'_1-8\\
			1
		\end{pmatrix}\right)\] for some $z\in\T^2$ such that $z_2=y_2$. Now select $\alpha$ such that $\varphi^{ v_{\alpha, 1}}(z)=y$ and $D_x \varphi^{ v_{\alpha, 1}} =A_1$. The sequence of parameters, $\alpha$, $\beta$, $i$, provide a realization of the projective process which maps $(x, u)$ to $(y, u')$.
	\end{proof}

	\bibliographystyle{halpha-abbrv}
	\bibliography{refs,preprints}

\end{document}